\documentclass[10pt,letterpaper]{article}
\usepackage{tcolorbox}

\usepackage{amssymb}
\usepackage{amsthm}
\usepackage{amsmath}
\usepackage{graphicx}
\usepackage{fullpage}
\usepackage[margin=1in]{geometry}
\usepackage{enumerate}
\usepackage{algorithm,algorithmic}
\usepackage{tikz}
\usepackage{subcaption}

\usepackage[colorlinks=red,linkcolor=red]{hyperref}
\makeatletter
\renewcommand*{\eqref}[1]{%
  \hyperref[{#1}]{\textup{\tagform@{\ref*{#1}}}}%
}
\makeatother

\bibliographystyle{plain}

\usepackage{color}

\newtheorem{theorem}{Theorem}[section]
\newtheorem{lemma}[theorem]{Lemma}
\newtheorem{proposition}[theorem]{Proposition}

\newtheorem{corollary}[theorem]{Corollary}

\theoremstyle{definition}
\newtheorem*{definition}{Definition}

\theoremstyle{remark}
\newtheorem*{remark}{Remark}

\newcommand{\E}{\mathbb{E}}
\newcommand{\R}{\mathbb{R}}

\newcommand{\be}{\mathbf{e}}
\newcommand{\bu}{\mathbf{u}}
\newcommand{\bv}{\mathbf{v}}

\newcommand{\bx}{\mathbf{x}}
\newcommand{\by}{\mathbf{y}}
\newcommand{\bz}{\mathbf{z}}
\newcommand{\bzero}{\mathbf{0}}

\newcommand{\sd}{\mathbb{S}^{d-1}}

\DeclareMathOperator*{\argmin}{arg\,min}

\title{Optimal Regularization Under Uncertainty: \\ Distributional Robustness and Convexity Constraints} 
\author{Oscar Leong\thanks{Department of Statistics and Data Science, University of California, Los Angeles (email: \texttt{oleong@stat.ucla.edu})}
\and Eliza O'Reilly\thanks{Department of Applied Mathematics and Statistics, Johns Hopkins University (email: \texttt{eoreill2@jh.edu})} 
\and Yong Sheng Soh\thanks{Department of Mathematics, National University of Singapore (email: \texttt{matsys@nus.edu.sg})}
}
\begin{document}

\maketitle

\begin{abstract}
Regularization is a central tool for addressing ill-posedness in inverse problems and statistical estimation, with the choice of a suitable penalty often determining the reliability and interpretability of downstream solutions. While recent work has characterized optimal regularizers for well-specified data distributions, practical deployments are often complicated by distributional uncertainty and the need to enforce structural constraints such as convexity. In this paper, we introduce a framework for distributionally robust optimal regularization, which identifies regularizers that remain effective under perturbations of the data distribution. Our approach leverages convex duality to reformulate the underlying distributionally robust optimization problem, eliminating the inner maximization and yielding formulations that are amenable to numerical computation. We show how the resulting robust regularizers interpolate between memorization of the training distribution and uniform priors, providing insights into their behavior as robustness parameters vary. For example, we show how certain ambiguity sets, such as those based on the Wasserstein-1 distance, naturally induce regularity in the optimal regularizer by promoting regularizers with smaller Lipschitz constants. We further investigate the setting where regularizers are required to be convex, formulating a convex program for their computation and illustrating their stability with respect to distributional shifts. Taken together, our results provide both theoretical and computational foundations for designing regularizers that are reliable under model uncertainty and structurally constrained for robust deployment.

\end{abstract}

\section{Introduction}

Across many real-world tasks in data science and machine learning, it is necessary to quantify and understand the potential uncertainty in a given model. Such uncertainty could be due to a number of factors, such as limited observations, dynamic environments, or modeling errors. These considerations are especially prevalent in problems in which solution reliability and robustness are of critical importance due to safety concerns, such as medical imaging. Given such considerations, we may wish to enforce that a given model we learn is provably robust to certain perturbations or exhibits beneficial properties via structural constraints that may aid in solution reliability. Techniques for ensuring robustness in problems in data science has a rich history, with powerful techniques from areas such as robust statistics \cite{loh2024theoretical} and Distributionally Robust Optimization (DRO) \cite{kuhn2019wasserstein}.

 In this work, we are particularly interested in questions of robustness and uncertainty in the context of statistical estimation and inverse problems, where the goal is to recover an underlying data signal from corrupted observations. To address the ill-posedness present in these problems, it is common to augment a data fidelity term with a regularization penalty to promote certain structure in a solution. The choice of regularizer is critical, as it governs both reconstruction accuracy and computational tractability.

The literature is rich with a variety of possible regularizers to choose from. Classical examples include hand-crafted regularizers that promote structures such as sparsity \cite{Tao2006, Daubechiesetal04, Donoho2006, Tibshirani94}, low-rankness \cite{CandesRecht09, FazelThesis, Rechtetal10}, or smoothness \cite{Rudinetal92}. The performance of such regularizers in specific inverse problems has been studied extensively, with many results focusing on estimation guarantees with respect to properties such as sample complexity and robustness to noise \cite{chandrasekaran2012convex, oymak2016sharp}. However, these guaranteees are mainly in settings where the regularizer is perfectly tailored to the underlying signal's structure (e.g., a sparsity-inducing norm to reconstruct sparse vectors). When the underlying structure is more difficult to characterize, data-driven regularizers are preferred as they can be tailored for a data distribution of interest. Such regularizers, however, often lack theoretical guarantees in the context of inverse problems as the particular model structure they learn is not well-understood. More recently, some works have aimed to understand the learned structure of such regularizers and consider whether a given regularizer is ``optimal'' for a given data distribution \cite{leong2024star, leong2025optimal, traonmilin2024towards, traonmilin2024theory,    zhang2025learning}.

While these results offer insights into the structure of such regularizers, these works operate in a well-specified setting, where the underlying data distribution or signal structure is known exactly. For instance, if one seeks to guard against distribution shifts at inversion time, it is unclear how one should design such a regularizer. Moreover, to further increase robustness and solution reliability, it may be useful to enforce structural constraints, such as convexity, in designing such robust regularizers. Given concerns regarding uncertainty and solution reliability, we aim to understand how to meaningfully integrate distributional robustness and structural constraints in the design of regularizers. In particular, we study the following questions:  
\begin{center}
    \textit{How do we compute an ``optimal'' regularizer when 1) the underlying data distribution is itself uncertain, and 2) we wish to enforce modeling constraints (e.g., convexity) for reliable downstream solutions?}
\end{center}

\subsection{Uncertainty Modeling via Distributionally Robust Optimization}

We address these questions rigorously through the framework of DRO. To describe our setting, let $\mathcal{F}$ denote a family of regularization functionals and define a criterion $\mathcal{L}(f;P)$ that measures how effectively $f$ captures the structure of a data distribution $P$; smaller values of $\mathcal{L}(f;P)$ correspond to better regularizers. To ensure robustness, we require that the regularizer remain optimal in a worst-case sense, performing well across all admissible perturbations of $P$. Concretely, given a divergence $d(\cdot,\cdot)$ between probability measures and a tolerance $\epsilon \geq 0$, we study a problem of the form \begin{align*}
    \argmin_{f \in \mathcal{F}} \left[\max_{d(Q,P) \leq \epsilon}\mathcal{L}(f;Q) \right].
\end{align*}

Intuitively, this formulation seeks a regularizer that promotes structure not only for the nominal distribution $P$, but also for all nearby distributions. To make progress on understanding solutions to this problem, we fix a family of regularizers and specify an appropriate criterion. Following \cite{leong2025optimal}, we consider regularization functionals $f$ that are continuous, positive except at the origin, and positively homogeneous (i.e., $f(t \bx) = tf(\bx)$ for all $t \geq 0$). This family of regularizers is expressive, as it includes all norms along with nonconvex quasinorms, such as the $\ell_q$-quasinorm for $q \in (0,1)$. Moreover, this set of conditions specifies regularizers $f$ as the gauge function (or Minkowski functional) of a \textit{star body} $K \subset \R^d$:
\begin{equation} \label{eq:gauge}
f(\bx) = \| \bx \|_{K} := \inf \{ \lambda \geq 0 : \bx \in \lambda \cdot K \}.
\end{equation} A set $K$ is called a star body if it is compact, has non-empty interior, and for any $\bx \neq 0$, the ray $\{\lambda \bx : \lambda \geq 0\}$ intersects the boundary of $K$ exactly once.

For our criterion, we propose to analyze $$\mathcal{L}(f;P) := \mathbb{E}_{P}[f(\bx)].$$ The above objective provides a meaningful criterion for regularizer selection as an effective choice of regularization function $f$ is one that evaluates to small values whenever the input is {structured}; that is, it resembles data of interest.  Conversely, it should evaluate to large values on inputs that are {unstructured}.  The objective $\mathbb{E}_{P}[ f(\bx) ]$ captures this criteria -- it seeks functions $f$ that evaluates to small values on input equal to data drawn from $P$, and penalizes for inputs that appear different from data drawn from $P$. Many data-driven regularization frameworks have used similar objectives to learn regularizers from data, such as those based on dictionary learning \cite{gribonval2015sample} and adversarial regularization \cite{lunz2018adversarial}.

In summary, the central optimization problem of interest in this work is the following: \begin{equation} \label{eq:dro}
\underset{K\ \text{star body}}{\mathrm{argmin}} ~\Big[ \underset{ d (Q,P) \leq \epsilon}{\max} ~ \E_Q[\|\bx \|_K] ~ \Big] \qquad \mathrm{s.t.} \qquad \mathrm{vol}(K) = 1. 
\end{equation} We finally remark that we include an additional normalization constraint $\mathrm{vol}(K)=1$. Normalization is necessary, as without it the optimal solution would be trivial (the zero function). Additionally, normalization encourages solutions $f$ that evaluate to large values over inputs that are unstructured. We will also show in this work that this normalization leads to reasonable solutions and the resulting formalization \eqref{eq:dro} is frequently expressible as a convex program.

\subsection{Our Contributions}

In Section \ref{sec:motivation-robustness}, we will discuss several issues that support the need for robustness considerations in learning regularizers. Then in Section \ref{sec:dro-reformulation-convex-duality}, we show that the DRO formulation \eqref{eq:dro} exhibits an equivalent optimization formulation that eliminates the inner maximization in \eqref{eq:dro}. Previous work analyzed a simpler optimization problem with $\epsilon=0$ and showed that one can use dual Brunn-Minkowski theory to characterize minimizers of the objective. We show in this work that a more direct analysis of the optimization problem using convex duality can lead to a simpler problem whose solution can be numerically computed. We will introduce the intuition behind this in Section \ref{sec:dro-reformulation-convex-duality}, illustrate the optimal solutions via numerical examples in Section \ref{sec:numerics-dro}, along with how the choice of divergence $d(\cdot,\cdot)$ and tolerance $\epsilon$ plays a role in the DRO solution in Section \ref{sec:structural-properties}. Notably, our results hold for any input distribution, including empirical measures and distributions with low-dimensional supports, which is in stark contrast to prior work \cite{leong2025optimal, leong2024star}. Then, we show in Section \ref{sec:convex} how we can use these ideas to analyze the optimal regularizer for a distribution under the additional constraint that the regularizer is assumed to be convex. We give a description of a convex program to compute the level sets of such regularizers and discuss several examples. Finally, in Section \ref{sec:extensions} we discuss how our proof techniques can be used to give elementary arguments for prior results on optimal regularization and we will highlight extensions of our theory to variants of the criterion functional $\mathcal{L}(f;P)$, such as those learned in adversarial regularization.

\subsection{Related Work}

\paragraph{Robustness of regularizers.} The robustness literature for regularizers in inverse problems largely examines the sensitivity of specific estimators to noise and tuning. For $\ell_1$-type methods, a series of works quantify sharp phase transitions and risk bounds under different regularization strengths, as well as oracle-type stability bounds in noisy regimes (e.g., \cite{berk2021best, negahban2012unified, oymak2016sharp, wojtaszczyk2010stability}). A complementary thread introduces different data-fit terms to adapt to different types of noise or unknown noise levels in inverse problems, such as absolute deviation estimators \cite{bloomfield1980least} or the square-root LASSO \cite{belloni2011square}. We additionally note recent work \cite{olea2022out} that connects square-root LASSO with a convex penalty to distributionally robust optimization, and gives guarantees on the out-of-sample performance of such estimators along with prescriptions on the choice of regularization strength.

Beyond noise, robustness under model/regularizer misspecification has been analyzed in compressed sensing with basis or grid mismatch \cite{pezeshki2015compressed, tang2013compressed}. Recent works \cite{shoushtari2024prior} show that plug-and-play denoisers as regularizers can perform well despite small distribution shifts and can exhibit performance gains from modest in-domain adaptation \cite{shoushtari2024prior}. Robustness to distributional shifts have also been investigated for generative modeling-based priors, such as those given by normalizing flows \cite{asim2020invertible}. By contrast, \textit{explicitly designing regularizers to mitigate against distributional shifts} remains nascent; our formulation addresses this gap via a distributionally robust objective and convex-duality reformulations.

\paragraph{Optimal regularization.} Recent work asks which regularizer is optimal for a given dataset or inverse problem. For quadratic/Tikhonov families, closed-form optimal functionals and learning schemes are available \cite{alberti2021learning,chirinos2024learning}, and there is a parallel literature on bilevel parameter learning for variational imaging \cite{ calatroni2017bilevel,ehrhardt2023optimal,kunisch2013bilevel}. Beyond parameter choice, recent work \cite{traonmilin2024towards,traonmilin2024theory} characterizes, for a model set and linear measurement operator class, which convex penalty is optimal. This theory recovers canonical instances such as the $\ell_1$-norm for sparsity. Closer to our setting, the works \cite{ leong2024star,leong2025optimal} show that among continuous, positively homogeneous functionals, the optimal gauge for a given data distribution admits geometric characterizations using star geometry and dual Brunn-Minkowski theory. Our work extends this line by incorporating distributional robustness and convexity constraints, yielding computable programs whose solutions interpolate between data-adapted and uniform priors.



\section{Preliminaries} \label{sec:prelims}

We briefly introduce certain geometric concepts that are used in this paper.  For a deeper treatment of this topic, we refer the interested reader to \cite{hansen2020starshaped} for a survey on star geometry, and \cite{schneider2013convex} for a reference to convex geometry.

Given $\bx,\by \in K$, we let $[\bx,\by]$ denote the line segment connecting $\bx$ and $\by$.  We say that a set $K \subset \mathbb{R}^{d}$ is {\em convex} if $[\bx,\by] \subset K$ for all $\bx,\by \in K$.  We say that $K$ is {\em star} if $[0,\by] \subset K$ for all $\by \in K$.  We call a compact star $K$ a {\em star body} if has nonempty interior and for every $\bx \neq 0$, the ray $\{\lambda \bx : \lambda > 0\}$ intersects the boundary of $K$ exactly once. The set of all star bodies in $\R^d$ is denoted by $\mathcal{S}^d.$ A set $K$ is called a \textit{convex body} if it is compact, convex with non-empty interior such that $0 \in \mathrm{int}(K)$. We say that a point $\bx$ {\em sees} $\by$ if $[\bx,\by] \in K$.  The {\em star} of $\bx$ are all points that $\bx$ sees; i.e., $\mathrm{st}(\bx:K) = \{ \by \in K : [\bx,\by] \in K \}$.  In particular, $K$ is star if $\mathrm{st}(0:K) = K$.  The {\em kernel} of $K$ are points that see all of $K$; that is, $\mathrm{ker}(K) = \{ \bx : \mathrm{st}(\bx:K) = K \}$.  A star set $K$ is convex if and only if $\mathrm{ker}(K) = K$.

Let $\sd$ and $B^d$ denote the unit Euclidean sphere and ball in $\R^d$, respectively. Suppose $K$ is a star body. Its {\em radial function} $\rho_K : \sd \rightarrow \mathbb{R}$ is defined by
\begin{equation*}
\rho_K (\bu) : = \sup \, \{ \, \lambda \geq 0 : \lambda \cdot \bu \in K \, \}.
\end{equation*}
A consequence is that if $\rho_K$ is continuous and positive over $\sd$, then $K$ is a compact star body  \cite{HHMM:20}. Note that for any two star bodies $K,L$, we have that $K \subseteq L$ if and only if $\rho_K \leq \rho_L$. The reciprocal of the radial function is called the {\em gauge function}
\begin{equation*}
\| \bx \|_K := \inf \{ \lambda \geq 0 : \bx \in \lambda \cdot K \}.
\end{equation*} We let $\|f\|_{\infty} := \sup_{\bx \in \sd} |f(\bx)|$ denote the supremum norm of $f$ over the sphere $\sd$. We will also consider dual mixed volumes between star bodies in this work. In particular, for $i \in \R$ and star bodies $K,L \in \mathcal{S}^d$, we define $\Tilde{V}_i(K,L)$ as the \textit{$i$-th dual mixed volume} between $K$ and $L$: $$\Tilde{V}_i(K,L) = \frac{1}{d}\int_{\sd} \rho_K(\bu)^i \rho_L(\bu)^{d-i}\mathrm{d}\bu.$$ Note that we recover useful identities in certain cases, such as $\tilde{V}_i(K,K)=\mathrm{vol}(K)$ for any $i \in \R$, where $\mathrm{vol}(\cdot)$ is the usual $d$-dimensional volume. In the special case, $i=-1$, the following result gives a concrete lower bound on the dual mixed volume, along with a characterization of the equality cases: \begin{theorem}[Special Case of Theorem 2 in \cite{lutwak1975dual}] \label{thm:lutwak-dmv} For star bodies $K,L \in \mathcal{S}^d$, we have
\[\tilde{V}_{-1}(K, L)^d \geq \mathrm{vol}(K)^{-1}\mathrm{vol}(L)^{d + 1},\]
and equality holds if and only if $K$ and $L$ are dilates, i.e., there exists an $\alpha > 0$ such that $K = \alpha L$.
\end{theorem}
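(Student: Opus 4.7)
The plan is to reduce the claim to a single application of Hölder's inequality applied directly to the integral representations on the sphere. Recall that $d\cdot \mathrm{vol}(K) = \int_{\sd}\rho_K(\bu)^d\,d\bu$, $d\cdot \mathrm{vol}(L) = \int_{\sd}\rho_L(\bu)^d\,d\bu$, and $d\cdot \tilde{V}_{-1}(K,L) = \int_{\sd}\rho_K(\bu)^{-1}\rho_L(\bu)^{d+1}\,d\bu$. The factors of $d$ will cancel cleanly at the end, so the real content is an inequality among the three integrals.

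The algebraic key is to factor the integrand $\rho_L^d$ as
\[
\rho_L^d \;=\; \bigl(\rho_K^{-1}\rho_L^{d+1}\bigr)^{d/(d+1)} \cdot \bigl(\rho_K^d\bigr)^{1/(d+1)}.
\]
Applying Hölder's inequality with conjugate exponents $p = (d+1)/d$ and $q = d+1$ then yields
\[
\int_{\sd}\rho_L^d\,d\bu \;\leq\; \Bigl(\int_{\sd}\rho_K^{-1}\rho_L^{d+1}\,d\bu\Bigr)^{d/(d+1)} \Bigl(\int_{\sd}\rho_K^d\,d\bu\Bigr)^{1/(d+1)}.
\]
Raising both sides to the $(d+1)/d$ power, isolating $\int \rho_K^{-1}\rho_L^{d+1}$, and substituting the volume/dual mixed volume expressions gives exactly $\tilde{V}_{-1}(K,L)^d \geq \mathrm{vol}(K)^{-1}\mathrm{vol}(L)^{d+1}$ after the $d$-factors cancel.

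For the equality case, I would invoke the standard equality condition in Hölder's inequality: the two factors must be proportional almost everywhere, i.e.\ $\rho_K^{-1}\rho_L^{d+1} = c\,\rho_K^d$ for some constant $c>0$. Since $K$ and $L$ are star bodies, their radial functions are continuous and strictly positive on $\sd$, so this equation holds pointwise and simplifies to $\rho_L = c^{1/(d+1)}\rho_K$. Using $\rho_{\alpha K} = \alpha\rho_K$, this is precisely the dilate condition $L = \alpha K$ with $\alpha = c^{1/(d+1)}$.

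The main obstacle is conceptually minor: it lies in guessing the correct factorization of $\rho_L^d$ so that Hölder pairs the integrands into the prescribed three integrals with the right exponents. Once that choice is made, the rest is bookkeeping on exponents, the $d$-factors, and tracking the equality case through the continuity and positivity of the radial functions.
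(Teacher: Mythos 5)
Your proof is correct. The paper does not reprove this statement---it simply cites Lutwak (1975)---but your Hölder argument with conjugate exponents $(d+1)/d$ and $d+1$ applied to the factorization $\rho_L^d = (\rho_K^{-1}\rho_L^{d+1})^{d/(d+1)}(\rho_K^d)^{1/(d+1)}$ is precisely the standard derivation of this dual Minkowski-type inequality, and the equality case via proportionality of the Hölder factors together with continuity and strict positivity of the radial functions is handled correctly.
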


\section{Distributionally Robust Optimal Regularizers} \label{sec:dro}


In this section, we study the DRO formulation \eqref{eq:dro} of the optimal regularization problem. Our main result is an alternative, but equivalent formulation of \eqref{eq:dro}; in particular, it is one that is amenable to computation. Using these formulations, we study how the distributionally robust optimal regularizers behave and exhibit robustness to changes in the underlying distribution.

\subsection{Motivation for Robustness} \label{sec:motivation-robustness}

We first discuss potential issues that may arise if we do not take robustness considerations into account. To begin, let us recall the initial criterion of finding an optimal regularizer over the space of star bodies. The following theorem provides a concrete characterization of the optimal star regularizer for certain well-behaved distributions, which depends on a particular functional that captures the mass of the distribution in any given direction and defines a new, data-dependent star body.

\begin{theorem}[Theorem 3 in \cite{leong2025optimal}] \label{thm:optstarbodyreg}
Let $P$ be a distribution on $\R^d$ with density $p$ and $\mathbb{E}_P[\|\bx\|_2] <\infty$. Consider the following optimization problem: \begin{align}
   \min_{K\ \text{star body}}\ \mathbb{E}_P[\|\bx\|_K]\ \qquad \mathrm{s.t.} \qquad \mathrm{vol}(K) = 1  \label{eq:opt_star_formulation}
\end{align}  Define the function $\rho_P$ over the unit sphere $\mathbb{S}^{d-1}$:
\begin{equation} \label{eq:rhoP}
\rho_{P}(\mathbf{u}) := \left(\int_0^{\infty} r^d p(r\mathbf{u}) \mathrm{d} r\right)^{1/(d+1)}, \qquad \mathbf{u} \in \mathbb{S}^{d-1}. 
\end{equation}
Suppose $\rho_P$ is positive and continuous.  Let $L_P$ be the star body whose radial function is $\rho_P$.  Then $\hat{K}$, as defined below, 
is the unique minimizer to \eqref{eq:opt_star_formulation}:
\begin{equation} \label{eq:optstar}
\hat{K} := \mathrm{vol}(L_P)^{-1/d}L_P.
\end{equation}
\end{theorem}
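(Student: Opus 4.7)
The plan is to reduce the problem to an application of the dual mixed volume inequality in Theorem~\ref{thm:lutwak-dmv}. The key observation is that, after a polar-coordinate decomposition, the objective $\E_P[\|\bx\|_K]$ becomes a constant multiple of the dual mixed volume $\tilde{V}_{-1}(K,L_P)$, where $L_P$ is the star body whose radial function is $\rho_P$. Once this identification is in hand, the claimed minimizer drops out of the equality case of Lutwak's inequality.

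First, I would convert the expectation to polar coordinates. Using $\mathrm{d}\bx = r^{d-1}\,\mathrm{d}r\,\mathrm{d}\bu$ together with the positive homogeneity identity $\|r\bu\|_K = r/\rho_K(\bu)$, and interchanging the order of integration (justified by nonnegativity, and with $\E_P[\|\bx\|_K]$ finite because $\|\cdot\|_K$ is bounded above by a constant times $\|\cdot\|_2$ on a compact star body and $\E_P[\|\bx\|_2]<\infty$), I obtain
$$\E_P[\|\bx\|_K] \;=\; \int_{\sd}\frac{1}{\rho_K(\bu)}\left(\int_0^\infty r^d p(r\bu)\,\mathrm{d}r\right)\mathrm{d}\bu \;=\; \int_{\sd}\frac{\rho_P(\bu)^{d+1}}{\rho_K(\bu)}\,\mathrm{d}\bu.$$
Since $\rho_P$ is positive and continuous on $\sd$ by assumption, it is the radial function of a well-defined star body $L_P \in \mathcal{S}^d$, and the right-hand side equals $d\cdot\tilde{V}_{-1}(K,L_P)$ by the definition of the $(-1)$st dual mixed volume given in Section~\ref{sec:prelims}.

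With this reformulation, the constrained problem becomes: minimize $\tilde{V}_{-1}(K,L_P)$ over star bodies $K$ with $\mathrm{vol}(K)=1$. Theorem~\ref{thm:lutwak-dmv} yields $\tilde{V}_{-1}(K,L_P)^d \geq \mathrm{vol}(L_P)^{d+1}$, with equality if and only if $K$ is a dilate of $L_P$. Imposing $\mathrm{vol}(K)=1$ pins down the dilation factor as $\alpha = \mathrm{vol}(L_P)^{-1/d}$, yielding the unique minimizer $\hat{K} = \mathrm{vol}(L_P)^{-1/d} L_P$. The real technical content is packaged inside Theorem~\ref{thm:lutwak-dmv}; the only remaining steps are the polar decomposition and verifying that $L_P$ is a bona fide star body, both of which follow cleanly from the stated hypotheses, so the main obstacle is really bookkeeping rather than a novel geometric argument.
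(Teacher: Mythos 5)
Your argument is correct, and it is a genuinely different route from the one this paper takes. You have essentially reconstructed the original proof from \cite{leong2025optimal}, which the paper explicitly credits: translate the objective into the $(-1)$st dual mixed volume via the polar-coordinate decomposition $\E_P[\|\bx\|_K] = d\,\tilde V_{-1}(K,L_P)$, then read off the minimizer and the uniqueness claim from the equality case of Lutwak's inequality (Theorem~\ref{thm:lutwak-dmv}). Each step you take is sound: the polar decomposition and Tonelli swap are justified by nonnegativity, the identification with $\tilde V_{-1}$ matches the paper's definition, and the positivity/continuity hypothesis on $\rho_P$ is exactly what the preliminaries require to certify $L_P$ as a star body. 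The paper, by contrast, sets out in Section~\ref{sec:piecewisestar} to give an \emph{alternative}, more elementary proof: it restricts to star sets whose radial function is piecewise constant on a spherical partition, solves the resulting finite-dimensional convex program explicitly via Lagrangian/KKT conditions (Proposition~\ref{thm:optstarreg_piecewiseconstant}), and then passes to the continuum limit over refining partitions using equicontinuity and approximation lemmas, concluding by contradiction. The trade-off is clear: your (and \cite{leong2025optimal}'s) route is short but packages the hard geometry inside Lutwak's dual Brunn--Minkowski inequality, whereas the paper's route is substantially longer but invokes only elementary convex optimization and limiting arguments, and---as the paper goes on to exploit---generalizes readily to settings (such as the critic-based and DRO formulations) where no analogous dual mixed volume inequality is available.
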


This result first appears in \cite{leong2025optimal}, where the proof appeals to dual mixed volumes and dual Brunn-Minkowski theory \cite{lutwak1975dual}.  In particular, the authors show that the objective \eqref{eq:opt_star_formulation} can be interpreted as a (dual) mixed volume, and by exploiting dual mixed volume inequalities such as Theorem \ref{thm:lutwak-dmv} and reading off equality conditions, one obtains descriptions of the optimal regularizer.

While the result provides strong insights into the form of the optimal regularizer for certain distributions, we highlight pathologies that arise in the absence of robustness considerations in the original formulation.

{\bf Atomic measures and memorization.}  
We note that the optimal star body regularizer has the interpretation of \textit{memorizing} data. This is particularly clear for data distributions $P$ given by atomic measures, which do not satisfy the assumptions of Theorem \ref{thm:optstarbodyreg}. A significant reason for this is that there is no minimizer for the above problem in this case. To see this, consider a data distribution that is uniformly supported on the standard basis vectors $\{ \pm \be_i \}_{i=1}^{d}$. We argue that the optimal objective value is zero. Construct the following cylinder in $\mathbb{R}^{d}$ with unit-volume for some parameter $\sigma > 0$: 
\begin{equation*}
T_{1,\sigma} := \left\{ \bx = (x_1,\dots,x_d) : |x_1| \leq 1/(2\sigma), \| (x_2,\ldots,x_{d-1})^T \|_2 \leq c \sigma^{1/(d-1)} \right\},
\end{equation*}
with $c$ chosen so that $T_{1,\sigma}$ has volume $1/d$.  Define the analogous sets $T_{i,\sigma}$ for $i \in [d]$ and put
$$T_{\sigma} := \bigcup_{i=1}^{d} T_{i,\sigma},$$
which has volume approximately one.  Cylinders are star bodies, and hence so is $T_{\sigma} $. Take $\sigma \rightarrow 0$.  Because the volume of the overlap between these cylinders vanish, $\mathrm{vol}(T_{\sigma}) \rightarrow 1$.  One then has
\begin{equation*}
\| \be_i \|_{T_{\sigma}} = 2 \sigma,
\end{equation*}
and hence the objective $\mathbb{E} [\| \bx \|_{T_{\sigma}}] \rightarrow 0$ as $\sigma \rightarrow 0$.  As such, the optimal objective value is zero.  But a zero objective cannot be attained by a star body, for if so, it must be that $\| \be_i \|_{T_{\sigma}} = 0$, and hence $\rho_{T_{\sigma}} ( \be_i ) \to +\infty$, which forces $T_{\sigma}$ to be unbounded.

While this example concerns data supported on standard basis vectors, the same argument extends to any atomic measure: if $P$ is an empirical distribution, then an optimal star body regularizer cannot exist. The reason it cannot exist is that the star set associated to such a distribution has zero (Lebesgue) volume, which does not allow it to satisfy the normalization constraint in \eqref{eq:opt_star_formulation}.

More generally, the fact that the optimal regularizer $\|\cdot\|_{\hat{K}}$ memorizes data can also be seen through the definition of the \textit{summary statistic} defined in \eqref{eq:rhoP}: the function $\rho_P$ summarizes data along radial directions in the sense that $\rho_P(\bu)$ quantifies the density of the data distribution $P$ that lies along a single direction $\bu$, along with how far this mass lies from the origin. This is perhaps useful in the case when $P$ has a well-behaved density, but less so in the case previously discussed where $P$ is an empirical measure.

On the surface, the fact that \eqref{eq:rhoP} memorizes is undesirable, because the regularizer $\hat{K}$ does not appear to learn the low-complexity structure that may be present in $P$. However, this may also be expected, since we have given $\hat{K}$ the flexibility to be any nonconvex gauge regularizer, which is an extremely expressive family of models; in particular, it necessarily means that $\hat{K}$ has been provided with the ability to overfit.

{\bf Ill-posedness.}  A closely related point we make is that the gauge function evaluations corresponding to the optimal star regularizer are sensitive to small changes in $P$. Let $P$ be the uniform distribution over the set of standard basis vectors $\mathcal{E}:=\{\be_1,\dots,\be_d\}$.  Let $\mathcal{E}'$ be a different set of vectors obtained by slightly perturbing the standard basis vectors; for concreteness, for small $\epsilon_i > 0$, consider $\be_1' := (1+\epsilon_1, \epsilon_2, \epsilon_3, \ldots, \epsilon_d)$.  

Let $P'$ be the uniform distribution over $\mathcal{E}'$.  Because these are atomic measures, the optimal star regularizer does not exist for $P$ and $P'$.  Let $P_\sigma$ be the distribution obtained by convolving $P$ with the Gaussian kernel with bandwith $\sigma$:
\begin{equation}
P_{\sigma} = P \ast \mathcal{N}(0, \sigma^2I),
\end{equation}
and define $P_\sigma'$ similarly. In this case, optimal star regularizers exists for $P_{\sigma}$ and $P_\sigma'$.  However, consider the gauge function evaluation over, say, $\be_1$. We would then have $\| \be_1 \|_{P_{\sigma}} \rightarrow 0$ as $\sigma \rightarrow 0$ -- the optimal star regularizer with respect to $P$ (in a sense) is the indicator function on the standard basis vectors.  However, by the same reasoning, the optimal star regularizer with respect to $P'$ is the indicator function on $\mathcal{E}'$, and hence $\| \be_1 \|_{P'_{\sigma}} \rightarrow \infty$ as $\sigma \rightarrow 0$. This is despite the fact that the data points $\mathcal{E}$ and $\mathcal{E}'$ are close to one another. Hence this example illustrates that such optimal star regularizers can be sensitive to the input distribution: nearby distributions can lead to drastically different optimal gauge functions. 

\subsection{DRO Reformulation via Convex Duality}
\label{sec:dro-reformulation-convex-duality}
Given these considerations, we would like to argue that exploiting a distributionally robust formulation will be beneficial in the sense that (i) one can show existence of solutions for any distribution, but also (ii) robustness considerations equip the optimal regularizer with additional regularity benefits both empirically and theoretically. To show this, we consider the DRO problem \eqref{eq:dro} with the ambiguity set defined via the Wasserstein distance. Given a cost function $C(\bx, \by)$ and distributions $P,Q$, we define $$d_W(Q,P) := \inf_{\beta \in \Gamma(Q,P)} \mathbb{E}_{(X,Y) \sim \beta} \left[C(X,Y)\right]$$ where $\Gamma(Q,P)$ is the set of all couplings between $Q$ and $P$.  Here, $C$ models a reasonable choice of cost function -- minimally, it should satisfy (i) $C(\bx,\bx) = 0$ for all $\bx$, (ii) $C(\bx,\by) > 0$ for all $\bx \neq \by$, and (iii) lower semi-continuity. Common examples include powers of $\ell_q$-norms, $C(\bx, \by) := \|\bx - \by\|^{\alpha}_q$ for $q,\alpha \geq 1$. For a given cost $C$, will consider the following problem for the remainder of this section: \begin{equation} \label{eq:dro-wass}
\underset{K\ \text{star body}}{\mathrm{argmin}} ~\Big[ \underset{ d_W (Q,P) \leq \epsilon}{\max} ~ \E_Q[\|\bx \|_K] ~ \Big] \qquad \mathrm{s.t.} \qquad \mathrm{vol}(K) = 1. 
\end{equation} 

Obtaining exact characterizations of the optimal star body solving \eqref{eq:dro-wass} is challenging, as the optimal distribution solving the inner maximization problem will depend on the optimization variable $K$ in a highly non-trivial fashion for most cases of $P,\epsilon,$ and $d_W$. We will investigate specific examples where we can make more concrete claims about the optimal solution in Section \ref{sec:structural-properties}. Instead, what we show is that there exists a reformulation of the above optimization problem using convex duality that is amenable to numerics, allowing us to visualize the optimal distributionally robust regularizer in several settings. Our main result is as follows:

\begin{theorem} \label{thm:dro_formulation} 
Let $P$ be a distribution on $\R^d$ with $\mathbb{E}_P[\|\bx\|_2]<\infty$ and suppose $C : \R^d \times \R^d \rightarrow \R$ is a non-negative, lower semi-continuous cost function satisfying $C(\bx,\by) = 0$ if and only if $\bx=\by$. Then
the optimization formulation \eqref{eq:dro-wass} is equivalent to the following
\begin{equation} \label{eq:dro_cts}
\underset{K, s, \lambda \in L1 (\mathrm{d}P)}{\mathrm{argmin}} ~~ s \epsilon + \int \lambda(\bx) \mathrm{d} P (\bx) \quad \mathrm{s.t.} \quad s C(\bx,\by) + \lambda (\bx) \geq \| \by \|_{K} , s \geq 0, \mathrm{vol}(K) \leq 1
\end{equation} where $L1(\mathrm{d}P) := \{f : \int |f(\bx)| \mathrm{d}P(\bx) < \infty\}$
\end{theorem}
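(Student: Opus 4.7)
The plan is to eliminate the inner Wasserstein maximization using strong duality for Wasserstein-DRO problems, and then collect the resulting minimization together with the outer search over $K$. For any fixed star body $K$, the gauge $\bx \mapsto \|\bx\|_K$ is non-negative, continuous, and bounded on compact sets, hence upper semi-continuous with linear growth (since $K$ contains a Euclidean ball of positive radius, one has $\|\bx\|_K \leq c \|\bx\|_2$ for some $c = c(K) > 0$). Under the stated assumptions on $C$, standard Wasserstein-DRO duality (e.g.\ the results of Blanchet--Murthy or Gao--Kleywegt) yields
\begin{equation*}
\max_{d_W(Q,P) \leq \epsilon} \E_Q[\|\bx\|_K] \;=\; \inf_{s \geq 0}\, \Big\{ s\epsilon + \int \sup_{\by \in \R^d} \big( \|\by\|_K - s\, C(\bx,\by) \big)\,\mathrm{d}P(\bx) \Big\}.
\end{equation*}
The assumption $\E_P[\|\bx\|_2] < \infty$ together with the linear bound on $\|\cdot\|_K$ ensures that the expression on the right is finite for every sufficiently large $s$, so the infimum is attained in a non-degenerate range.

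Next, I would epigraphically reformulate the inner supremum. Introduce an auxiliary function $\lambda : \R^d \to \R$ and demand
\begin{equation*}
\lambda(\bx) \;\geq\; \sup_{\by \in \R^d} \big( \|\by\|_K - s\, C(\bx,\by) \big), \qquad \text{i.e.,} \qquad s\, C(\bx,\by) + \lambda(\bx) \geq \|\by\|_K \quad \text{for all } \bx,\by.
\end{equation*}
Because the original $\sup$ expression is non-negative (take $\by = \bx$ and use $C(\bx,\bx) = 0$ and $\|\bx\|_K \geq 0$) and bounded above by an affine function of $\|\bx\|_2$, one verifies that $\lambda \in L^1(\mathrm{d}P)$ at optimality, so the substitution is lossless. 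Minimizing jointly over $(s,\lambda)$ reproduces the previous display, and swapping the outer $\min_K$ with the newly introduced $\inf_{s,\lambda}$ (which is legal because both are infima over independent variables) yields the joint program in $(K,s,\lambda)$.

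Finally, I would handle the volume constraint. In the joint program, enlarging $K$ to $\alpha K$ for $\alpha > 1$ only decreases $\|\by\|_K = \alpha^{-1} \|\by\|_{\alpha K}$, hence only relaxes the pointwise constraint $s C(\bx,\by) + \lambda(\bx) \geq \|\by\|_K$ while leaving the objective untouched. Consequently, the relaxation $\mathrm{vol}(K) \leq 1$ is tight at optimality, so replacing $\mathrm{vol}(K) = 1$ with $\mathrm{vol}(K) \leq 1$ does not change the minimizer. Collecting the pieces gives precisely \eqref{eq:dro_cts}.

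The main obstacle in this plan is verifying the applicability of strong Wasserstein duality without a duality gap. The function $\|\cdot\|_K$ is continuous but unbounded, so one needs a growth hypothesis on $C$ relative to $\|\cdot\|_K$; this is where the assumption $\E_P[\|\bx\|_2] < \infty$ enters, together with $\|\bx\|_K \lesssim \|\bx\|_2$, to ensure that the dual value is finite and that attainment in the primal Wasserstein problem holds. A careful citation of the relevant strong-duality theorem (specifying the measurability of $\lambda$ and the interpretation of the pointwise constraint as holding $P$-a.e.\ in $\bx$ and for all $\by$) completes the argument.
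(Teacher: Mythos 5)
Your proposal follows essentially the same route as the paper: invoke the strong duality result of Blanchet--Murthy for Wasserstein DRO to eliminate the inner maximization, introduce $\lambda$ via an epigraphic reformulation of $\sup_{\by}\big(\|\by\|_K - sC(\bx,\by)\big)$, and observe that the equality constraint $\mathrm{vol}(K)=1$ can be relaxed to $\mathrm{vol}(K)\leq 1$ because scaling up $K$ only helps. The paper's actual proof is equally short and verifies the hypothesis of Blanchet's Theorem~1 by noting that $K$ being a star body gives $rB^d \subseteq K$ for some $r>0$, hence $\|\cdot\|_K \leq r^{-1}\|\cdot\|_2 \in L^1(\mathrm{d}P)$; you argue for the same conclusion via the linear-growth bound $\|\bx\|_K \leq c(K)\|\bx\|_2$, which is the same observation. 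Two small things to tighten: (i) the scaling identity you wrote is inverted — for $\alpha>1$ one has $\|\by\|_{\alpha K}=\alpha^{-1}\|\by\|_K$, not $\|\by\|_K=\alpha^{-1}\|\by\|_{\alpha K}$ (the conclusion you draw is nonetheless correct); and (ii) "$\lambda\in L^1(\mathrm{d}P)$ at optimality" holds only for $s$ large enough — for small $s$ the pointwise supremum can be $+\infty$ (as the paper makes explicit in Proposition~\ref{prop:lambda-phi-characterization}) — but since those $s$ are automatically excluded once a finite objective value is available, the argument goes through.
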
 
In addition to $K$ being an optimization variable (as in \eqref{eq:dro-wass}), it is necessary to introduce the additional variables $s$, which is a scalar variable, and $\lambda$, which is a function in $\bx$.  The main utility that Theorem \eqref{thm:dro_formulation} offers over \eqref{eq:dro-wass} is that it eliminates the inner maximization within \eqref{eq:dro-wass}.  In particular, we are able to compute (approximately) optimal solutions to \eqref{eq:dro-wass} by suitably discretizing \eqref{eq:dro_cts}.  We illustrate this process with numerical experiments in Section \ref{sec:numerics-dro}.

We explain how one arrives at the formulation in \eqref{eq:dro_cts}.  In essence, the key idea is to apply convex duality to the inner maximization in \eqref{eq:dro-wass}.  To provide some intuition, let $\mathcal{U} = \{ U : U \subset \sd \}$ be a collection of open subsets of the unit sphere $\sd$ that form a partition of the sphere $\sd$, up to a set of zero measure.  In what follows, we seek the optimal star regularizer among the collection of star sets $K$ whose radial functions are piecewise {\em constant} over each $U \in \mathcal{U}$.  With a slight abuse of notation, we simply say such sets $K$ are piecewise constant over $\mathcal{U}$.  We let $\{ t_U : U \in \mathcal{U} \}$ denote the gauge function of $K$ in the direction $U$.  In particular, these $t_U$'s will operate as our main decision variables.  In addition, we assume that the sets in the partition have equal area so that the volume of $K$ scales with $\sum_{U \in \mathcal{U}} t_U^{-d}$.  We restrict $P$ and $Q$ to be atomic measures that take on precisely one value within each $U$.  More concretely, suppose we let $\mathcal{V}$ denote the collection of all possible realizations of the support of $P$ and $Q$
$$
\mathcal{V} := \{ \bv_{U} : U \in \mathcal{U} \} \subset \mathbb{R}^{d}.
$$
The collection $\mathcal{V}$ satisfies $\bv_{U} \in U$ for all indices $U \in \mathcal{U}$.  With these assumptions in place, the finite-dimensional analog of \eqref{eq:dro-wass} can be written as
\begin{equation} \label{eq:dro_discretized}
\underset{t_U}{\mathrm{argmin}} ~\Big[ \underset{ d_{W} (\boldsymbol{p}, \boldsymbol{q}) \leq \epsilon}{\max} ~ \sum_{U \in \mathcal{U} } \mathbb{P} [Q = \bv_U ] \| \bv_U \|_2 t_U ~ \Big] \qquad \mathrm{s.t.} \qquad \sum t_U^{-d} \leq 1. 
\end{equation}
In particular, because $P$ and $Q$ are atomic distributions, we can express these as finite dimensional vectors.  In the above, we denote $P$ and $Q$ as $\boldsymbol{p},\boldsymbol{q} \in \mathbb{R}^{|\mathcal{U}|}$.  In addition, we obtain the expression for the objective in \eqref{eq:dro_discretized} by noting the following
$$
\mathbb{E}_{\bx \sim Q} [ \| \bx \|_{K} ] = \sum_{U \in \mathcal{U}} \mathbb{P}[Q = \bv_U] \| \bv_U \|_{K} = \sum_{U \in \mathcal{U}} \mathbb{P}[Q = \bv_U] \| \bv_U \|_{2} t_U.
$$

Now suppose that the variables $t_U$ are fixed and $\| \bv_U \|_{2}$ are provided as inputs.  Consider the inner maximization over $Q$ in isolation.  In this setting, the decision variable is the value of $\mathbb{P}[Q = \bv_U]$.  The inner optimization instance is a {\em linear program} as the objective is linear, and the constraint set -- defined with respect to a suitable optimal transportation cost -- can be expressed as the solution of a linear program, specified in the following:
\begin{equation} \label{eq:dro_innermax}
\underset{\boldsymbol{q}, \pi}{\max} ~ \langle \boldsymbol{q}, \mathbf{t} \rangle \quad \mathrm{s.t.} \quad \langle C, \pi \rangle \leq \epsilon, \pi \mathbf{1} = \boldsymbol{p}, \pi^T \mathbf{1} = \boldsymbol{q}, \pi \geq 0.
\end{equation}
Here, the matrix $C:=C(\bx,\by)$ models the cost of moving unit mass from point $\bx$ to $\by$, while $\mathbf{t}$ is the vector whose entries are $\|\bv_U\|_2 t_U$.  By recalling that strong duality holds for linear programs, we conclude that \eqref{eq:dro_innermax} is equivalent to the following:
\begin{equation} \label{eq:innerprob_LP}
\underset{\boldsymbol \lambda, s}{\min} ~ s \epsilon + \langle \boldsymbol{p}, \boldsymbol \lambda \rangle \qquad \mathrm{s.t.} \qquad s C + \boldsymbol \lambda \mathbf{1}^T \geq \mathbf{1} \mathbf{t}^T, s \geq 0.
\end{equation}
Now notice that the objective and all of the constraints, with the exception of the volume constraint, are $1$-homogeneous.  In particular, this means that the constraint $\sum t_U^{-d} \leq 1$ holds with equality at optimality.  Finally, by taking the size of the discretization $U$ to $0$ with respect to its (surface) volume, we recover the following
\begin{equation} \label{eq:innerprob_cts}
\underset{s, \lambda \in L1 (\mathrm{d}P)}{\mathrm{argmin}} ~~ s \epsilon + \int \lambda(\bx) \mathrm{d} P (\bx) \quad \mathrm{s.t.} \quad s C(\bx,\by) + \lambda (\bx) \geq \| \by \|_{K} , s \geq 0.
\end{equation} While the above proof sketch provides intuition for how one arrives at the result, we formally prove the Theorem here.
\begin{proof}[Proof of Theorem \ref{thm:dro_formulation}]
The formal proof of this result exploits standard results in the DRO literature. First, note that the optimization formulation \eqref{eq:dro-wass} can be equivalently stated with the relaxed constraint $\mathrm{vol}(K) \leq 1$ since for any $K$ with $\mathrm{vol}(K) < 1$, the objective can be decreased by considering $cK$ for $c > 1$ since $\|\cdot\|_{cK} = \frac{1}{c}\|\cdot\|_K$. For the form of the inner maximization problem, fix any feasible star body $K$. Note that since $K$ is a star body, we have $r:=\inf_{\bu \in \sd} \rho_K(\bu) > 0$ and for such an $r$, $rB^d \subseteq K$ so that $\|\bx\|_K \leq \frac{1}{r}\|\bx\|_2$. Moreover, by assumption $\mathbb{E}_P[\|\bx\|_2] < \infty$ which implies $\mathbb{E}_P[\|\bx\|_K] \leq \frac{1}{r}\mathbb{E}_P[\|\bx\|_2] < \infty$, so we conclude $\|\cdot\|_K \in L1(\mathrm{d}P)$. Thus, the assumptions of Theorem 1 in \cite{blanchet2019quantifying} are met, which states that the inner maximization problem can be written as \begin{align*}
    \sup_{Q:d_W(P,Q)\leq \epsilon} \mathbb{E}_Q[\|\bx\|_K] & = \inf\left\{s\epsilon  + \int \lambda(\bx)\mathrm{d}P(\bx) : (s,\lambda) \in \Lambda_{C,\|\cdot\|_K}\right\}
\end{align*} where the feasible set $\Lambda_{C,\|\cdot\|_K}$ is defined as \begin{align*}
\Lambda_{C,\|\cdot\|_K} := \left\{(s,\lambda) : s \geq 0,\ \lambda \in L1(\mathrm{d}P),\ \lambda(\bx) + sC(\bx,\by) \geq \|\by\|_K, \forall (\bx,\by)\right\}.    
\end{align*} Recognizing \eqref{eq:innerprob_cts} for fixed $K$ and minimizing over feasible $K$ yields \eqref{eq:dro_cts}.
\end{proof}

\subsection{Numerical illustrations} \label{sec:numerics-dro}

Using the formulation derived in Theorem \ref{thm:dro_formulation}, we now illustrate the effect of the robustness parameter and cost choices through two examples.  These examples were computing using \eqref{eq:innerprob_LP}. We will focus on visualizing these regularizers in $2$-dimensions for illustrative purposes.

\begin{figure}[h] 
\centering
\includegraphics[width=0.24\textwidth]{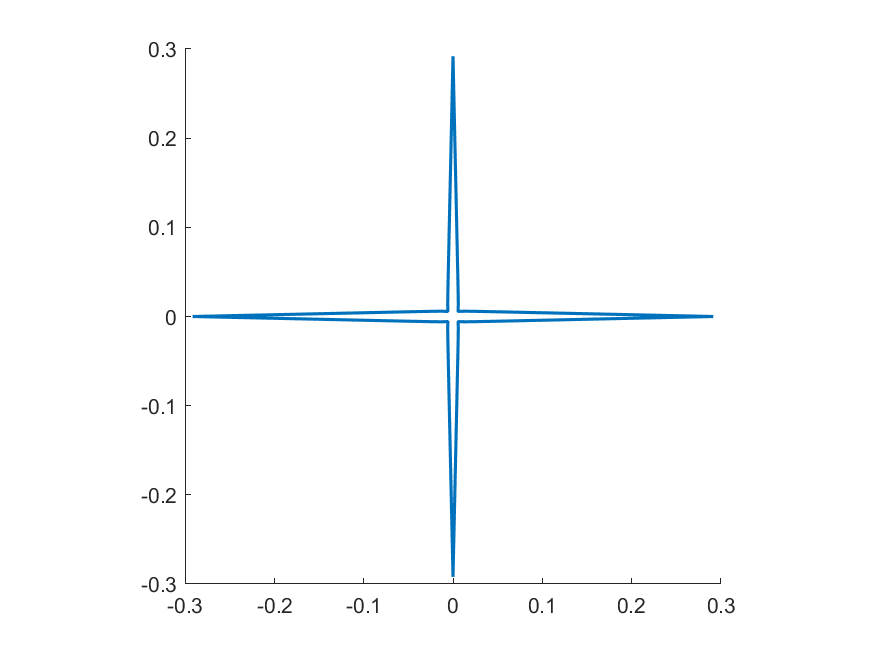}
\includegraphics[width=0.24\textwidth]{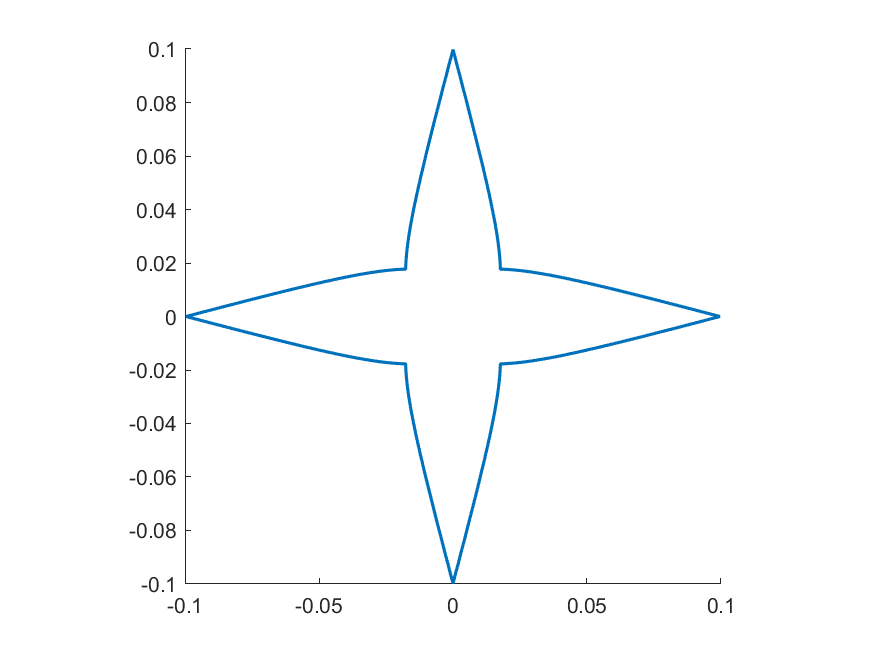}
\includegraphics[width=0.24\textwidth]{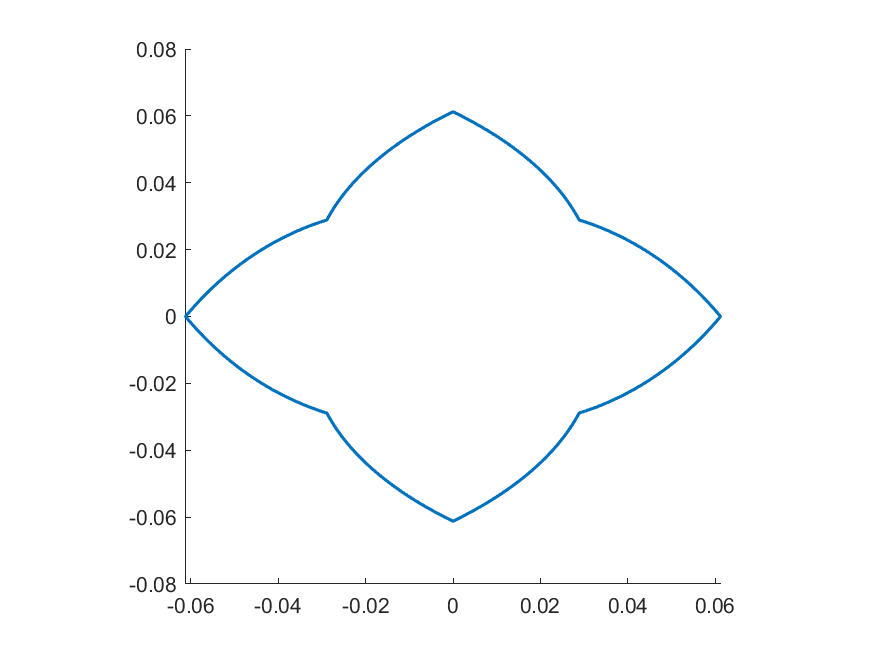}
\includegraphics[width=0.24\textwidth]{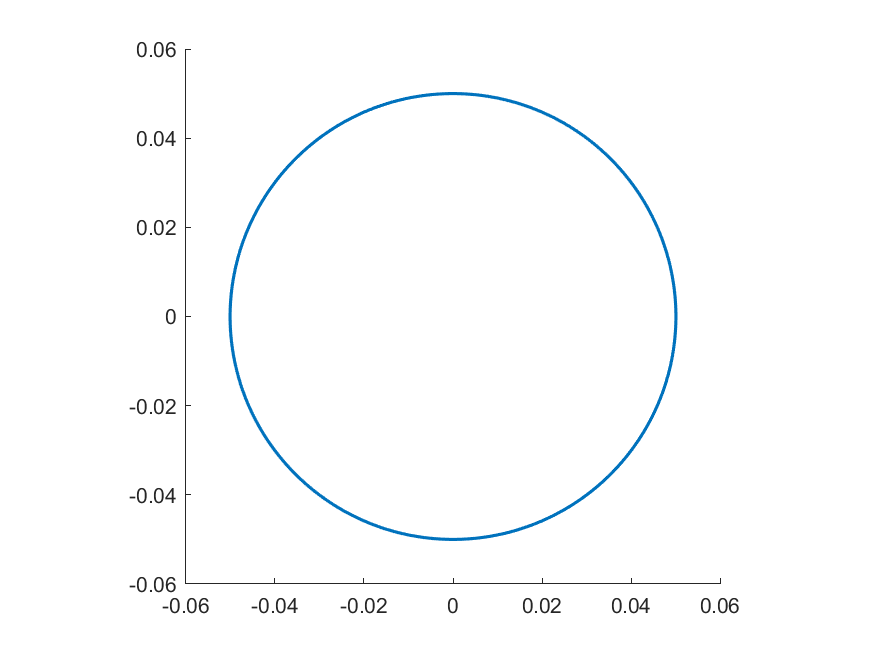}
\caption{Distributionally robust optimal regularizer for data supported on standard basis vectors.  The choice of $\epsilon$, from left to right, is $0.01$, $0.1$, $0.2$, $0.3$, with the cost given by the absolute distance.}
\label{fig:dro_L0}
\end{figure}

\paragraph{Example 1: Absolute cost distance.} In the first example, we consider a data distribution supported on the standard basis vectors and their negations $\{ (0,1), (-1,0), (0,-1), (1,0) \}$ with equal probability. In Figure \ref{fig:dro_L0} we show the distributionally robust optimal regularizer obtained via the formulation in \eqref{eq:dro_cts}.  The choices of $\epsilon$, from left to right, are $0.01$, $0.1$, $0.2$, $0.3$, while the cost function is the absolute distance of the argument $|\theta_i - \theta_j|$ (i.e. the arc length). For small values, we notice that the level resembles the $\ell_0$-norm, which is in effect placing dirac $\delta$-spikes on the standard basis vectors. As we increase $\epsilon$, the spikes broaden. We expect this because the optimal regularizer guards against distributions that are close to the original distribution in the Wasserstein-1 distance. At about $\epsilon \geq 0.3$, we see that the optimal regularizer is close to the $\ell_2$-norm -- this is consistent with an earlier remark that the optimal regularizer to \eqref{eq:dro_cts} is the $\ell_2$-norm for large $\epsilon$.

\begin{figure}[h] 
\centering
\includegraphics[width=0.24\textwidth]{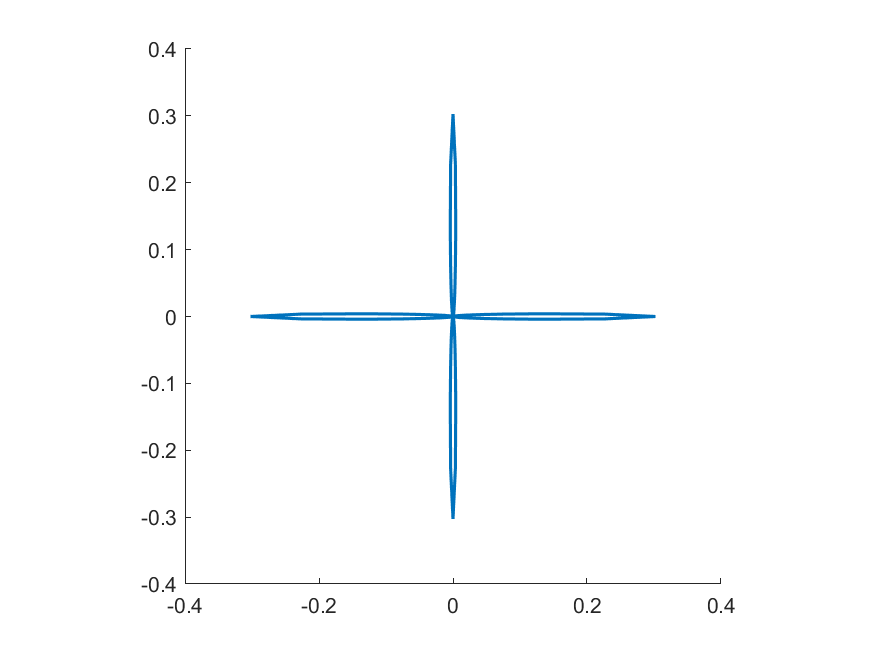}
\includegraphics[width=0.24\textwidth]{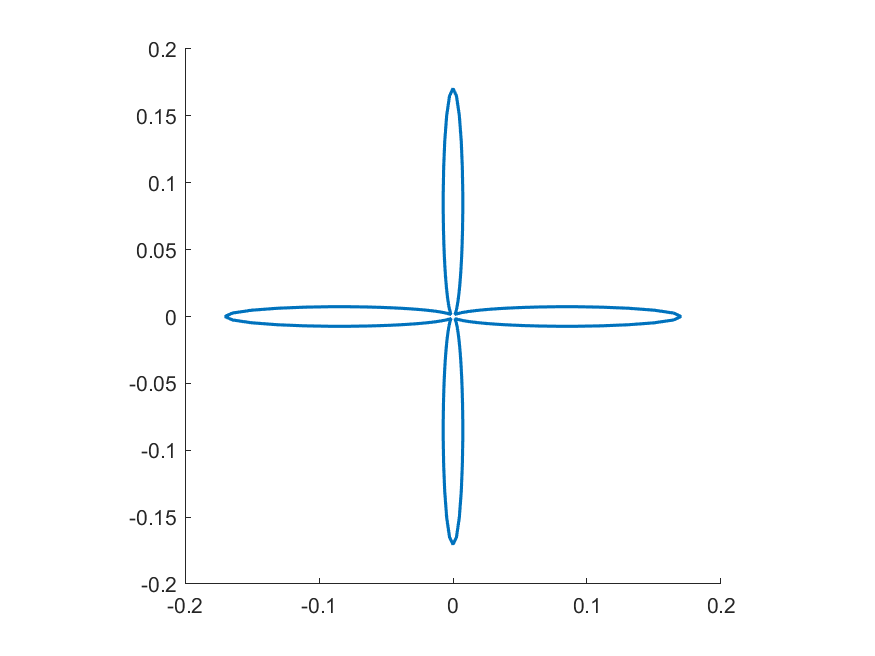}
\includegraphics[width=0.24\textwidth]{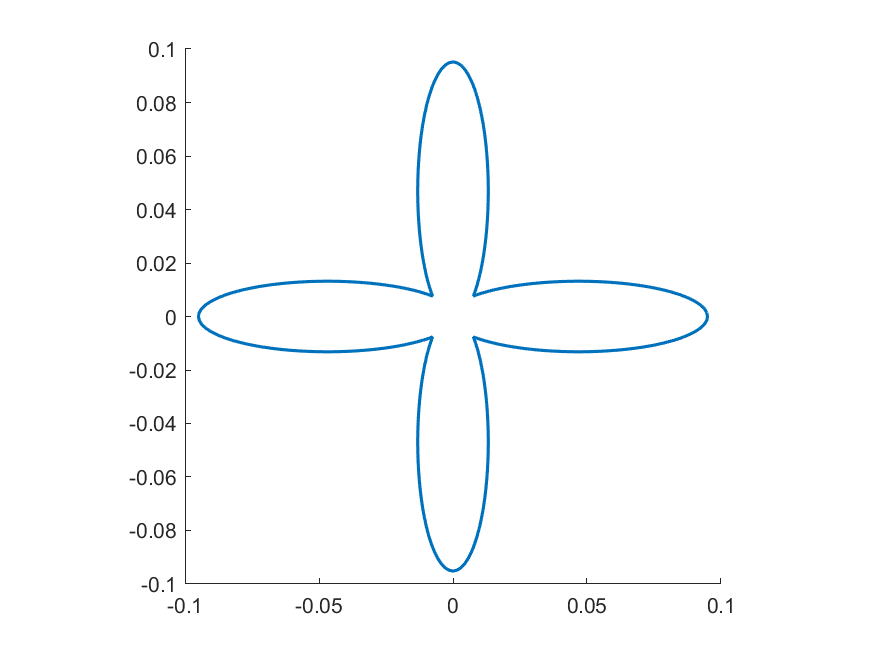}
\includegraphics[width=0.24\textwidth]{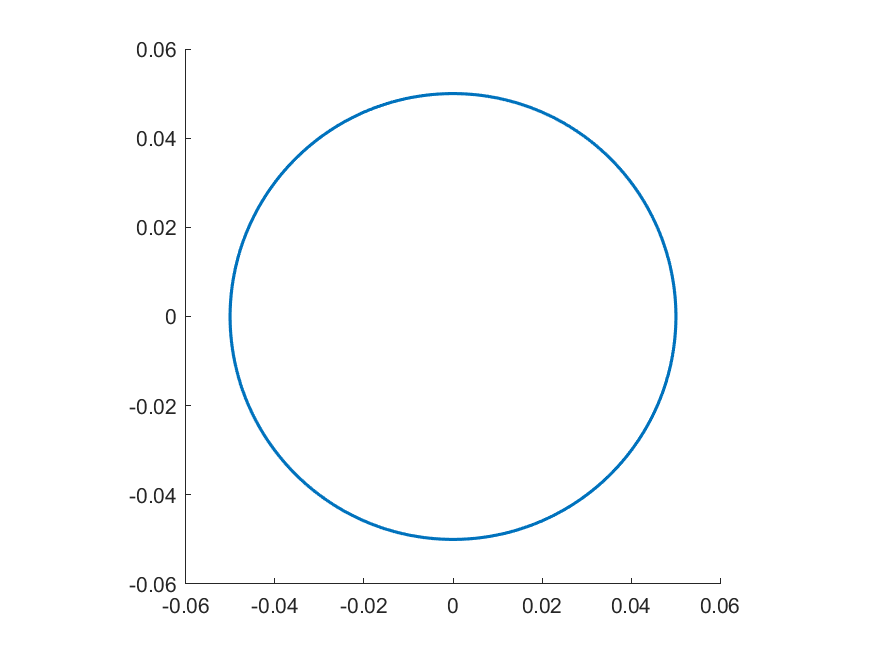}
\caption{Distributionally robust optimal regularizer for data supported on standard basis vectors.  The choice of $\epsilon$, from left to right, is $0.01$, $0.1$, $1.0$, $10$, and the cost function is the $\ell_2^2$ distance.}
\label{fig:dro_L0L1}
\end{figure}

\paragraph{Example 2: Quadratic cost.} In the second example we consider same data distribution, but with a quadratic $\ell_2^2$ cost function $(\theta_i - \theta_j)^2$.  The choices of $\epsilon$, from left to right, are $0.01$, $0.1$, $1.0$, $10$. We again observe dirac $\delta$-like structures at small $\epsilon$ that widen as $\epsilon$ grows, but the geometry of the level sets changes to exhibit smooth structure. In particular, in the previous example, we notice that the level set is ``spiky'' at $\theta = 0$ (the normal cone is non-trivial), whereas in the current example the level set is smooth (the normal cone is trivial).  A second difference is that the ``arms'' of the level set in the setting where the cost is $|\theta_i - \theta_j|$ grow wider as we go towards the center, whereas ``arms'' in the setting where the cost is $(\theta_i - \theta_j)^2$ grow more narrow as we go towards the center.  The difference comes from the fact that the squared L2-loss $(\theta_i - \theta_j)^2$ penalizes large deviations more heavily that the L1-cost $|\theta_i - \theta_j|$.  

These examples confirm that the robustness parameter $\epsilon$ systematically interpolates between highly data-adapted regularizers and isotropic norms, and that the choice of cost $C$ significantly influences the resulting geometry. We will discuss these topics from a more mathematical perspective in the next Section.

\subsection{Structural Properties of DRO Regularizers} \label{sec:structural-properties}

A natural question about the DRO formulation \eqref{eq:dro-wass} is how do the cost function $C$ and robustness parameter $\epsilon$ play a role in determining the geometry of the optimal regularizer. We illustrated in the previous section how these parameters influence the solution via numerical examples. We aim to develop a more mathematical understanding in the subsequent sections.

\subsubsection{The role of $\epsilon$ and its connection to uniform priors}

We will explore the role of the robustness parameter $\epsilon$ in this section. In order to understand the effect of $\epsilon$, it is instructive to analyze the two possible extremes:

\paragraph{Small robustness parameter $\epsilon$.} First, let's consider the extreme where $\epsilon \rightarrow 0$.  We show how in this regime, we essentially recover the original formulation  \eqref{eq:opt_star_formulation}. Note that the effect of $\epsilon \rightarrow 0$ is that the optimal choice of $s \rightarrow \infty$.  Recall the following constraint
\begin{equation} \label{eq:epsilon_uniform_constraint}
s C(\bx,\by) + \lambda (\bx) \geq \| \by \|_{K}.  
\end{equation}
Whenever $\bx \neq \by$, one has $C(\bx,\by) > 0$.  Because one has $s \rightarrow \infty$, the constraint \eqref{eq:epsilon_uniform_constraint} will be satisfied eventually.  This leaves the case where $\bx = \by$.  Note that it is necessary to adopt the convention $0 \times \infty = 0$ in what follows.  The constraint \eqref{eq:epsilon_uniform_constraint} then translates to $\lambda(\bx) \geq \| \bx \|_{K}$ for all $\bx$.  In other words, the objective reduces to $\mathbb{E} [ \| \bx \|_{K} ]$, as we expect.  

\paragraph{Large robustness parameter $\epsilon$.} Second, let's consider the extreme where $\epsilon \rightarrow \infty$.  Then the objective drives $s \rightarrow 0$, in which case the inequality \eqref{eq:epsilon_uniform_constraint} reduces to $\lambda (\bx) \geq \| \by \|_{K}$.  This means that $\| \by \|_{K}$ is to be uniformly bounded by some constant.  By pushing $\lambda \rightarrow 0$, we encourage the volume of $K$ to be as large as possible, so in fact the gauge evaluation is a constant -- that is, $K$ is the (scaled) unit sphere.  This may be expected -- when $\epsilon$ is large, one has to guard against the worst possible distribution, and that has zero relation to the base distribution on which the data is drawn from.  When there is no prior, one simply selects a regularizer that is uniform across all directions; i.e., the {\em uniform} prior.  

Thus, increasing $\epsilon$ transitions the optimal regularizer from a data-dependent geometry to the isotropic $\ell_2$-ball. Distributional robustness therefore plays a role analogous to imposing a uniform prior, with $\epsilon$ controlling the tradeoff. This intuition aligns with the numerical illustrations we discussed in Section \ref{sec:numerics-dro}.

\subsubsection{Homogeneity and normalization properties}

Next, we describe a number of basic properties regarding \eqref{eq:dro_cts}.

First, let $K$, $\lambda$, and $s$ be feasible in \eqref{eq:dro_cts}.  Suppose $\mathrm{vol}(K) < 1$.  Let $c > 1$ be such that $\mathrm{vol}(c K) = 1$.  Then $\| \by \|_{c K} = \|\by \|_{K} / c$.  Now notice that the objective and the constraints are $1$-homogeneous.  In particular, the triplet $(cK,\lambda/s, s/c)$ is also feasible, but by doing so, we decrease the objective by a factor of $1/c < 1$.  

Second, notice from the constraint in \eqref{eq:dro_cts} that one has $\lambda (\bx) \geq \sup_{\by} \| \by \|_{K} - s C(\bx,\by)$.  Now, because $\mathrm{d}P$ is a positive measure, at optimality we would in fact have
\begin{equation*} 
\lambda (\bx) ~=~ \sup_{\by} \, \| \by \|_{K} - s C(\bx,\by).
\end{equation*}
Indeed, this is shown to be a consequence of Theorem 1 in \cite{blanchet2019quantifying} (see equation (9) and the discussion surrounding it). This means that the function $\lambda$ in \eqref{eq:dro_cts} can be expressed entirely in terms of the set $K$, $s$, and the cost $C$.

Third, we make a similar characterization regarding $\| \by \|_{\hat{K}}$, where $\hat{K}$ is the optimal solution to \eqref{eq:dro_cts}.  Let $K$, $\lambda$, and $s$ be feasible in \eqref{eq:dro_cts}.  From the constraints we have 
$$\| \by \|_{K} \leq \inf_{\bx} s C(\bx,\by) + \lambda (\bx).$$
There is a sense in which equality should also hold for $\hat{K}$ in the above inequality; however, it is not a priori clear if the expression on the right-hand side $\inf_{\bx} s C(\bx,\by) + \lambda (\bx)$, as it is defined above, necessarily specifies a function that is $1$-homogeneous and, hence, realizable as the gauge function of a star body. However, there is an important case where this is true -- this is when $C$ is a norm.


\color{black}

\begin{proposition} \label{prop:lambda-phi-characterization}
    Let $K$ be a star body and suppose $C(\bx,\by) = \|\bx-\by\|$, where $\|\cdot\|$ is any norm. Define $$s_*:= \sup_{\|\by\|=1}\|\by\|_K \in (0,\infty).$$ Consider the function $\lambda(\bx) = \sup_{\by} \|\by\|_K - s\|\bx-\by\|$ and $\phi(\by):= \inf_{\bx} s\|\bx-\by\| + \lambda(\bx)$. Then \begin{itemize}
        \item if $s < s_*$, $\lambda(\bx) = +\infty$ for all $\bx$,
        \item if $s \geq s_*$, we have that $\lambda(\bx) = \phi(\bx)$ for all $\bx$. Moreover, $\phi$ (and hence $\lambda$) is $1$-homogeneous, continuous, and positive over the unit sphere, satisfying the bounds $\|\bx\|_K \leq \lambda(\bx)=\phi(\bx)\leq s\|\bx\|$.
    \end{itemize}
\end{proposition}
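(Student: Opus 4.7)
The plan is to first establish that $s_* \in (0,\infty)$: the $\|\cdot\|$-unit sphere is compact in $\R^d$ by equivalence of norms, and $\|\cdot\|_K$ is continuous and strictly positive on $\R^d\setminus\{0\}$, so the supremum is attained and positive. By $1$-homogeneity of $\|\cdot\|_K$, this upgrades to the global estimate $\|\by\|_K \leq s_*\|\by\|$ for all $\by$, which is the workhorse bound throughout. For the case $s < s_*$, I would pick a direction $\bu$ with $\|\bu\|=1$ and $\|\bu\|_K > s$ (available by definition of the supremum) and substitute $\by = t\bu$ into the sup defining $\lambda(\bx)$. Using the triangle estimate $\|\bx - t\bu\| \leq \|\bx\| + t$, the integrand is at least $t(\|\bu\|_K - s) - s\|\bx\|$, which diverges to $+\infty$ as $t\to\infty$, settling the first bullet.

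For the case $s \geq s_*$, I would establish the properties of $\lambda$ one at a time. The upper bound $\lambda(\bx) \leq s\|\bx\|$ follows from combining $\|\by\|_K \leq s\|\by\|$ with the reverse triangle inequality $\|\by\| - \|\bx-\by\| \leq \|\bx\|$ applied termwise inside the sup; in particular $\lambda$ is everywhere finite. The lower bound $\lambda(\bx) \geq \|\bx\|_K$ is immediate on taking $\by = \bx$, and this also yields strict positivity on $\sd$ since $\|\cdot\|_K > 0$ off the origin. For $1$-homogeneity, the change of variables $\by = t\bw$ in the sup defining $\lambda(t\bx)$ pulls a clean factor of $t$ out of both $\|\by\|_K$ and $s\|t\bx - \by\|$, giving $\lambda(t\bx) = t\lambda(\bx)$ for $t > 0$; the case $t=0$ follows from $\lambda(0) = 0$, which itself follows from the sandwich $0 \leq \lambda(0) \leq s\|0\| = 0$.

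The identity $\phi = \lambda$ is the central point. The easy direction $\phi(\by) \leq \lambda(\by)$ comes from choosing $\bx = \by$ in the infimum defining $\phi$. For the reverse, the key observation is that for each fixed $\by$ the function $\bx \mapsto \|\by\|_K - s\|\bx-\by\|$ is $s$-Lipschitz in $\bx$ with respect to $\|\cdot\|$ by the triangle inequality, so the supremum $\lambda$ inherits the same $s$-Lipschitz property (finiteness already guaranteed by the upper bound from the previous paragraph). The Lipschitz estimate $\lambda(\by) \leq \lambda(\bx) + s\|\bx-\by\|$ then gives $\lambda(\by) \leq \phi(\by)$ upon taking the infimum over $\bx$. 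Continuity of $\lambda = \phi$ is then immediate from this Lipschitz bound, and $1$-homogeneity transfers from $\lambda$ to $\phi$.

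The main conceptual obstacle is recognizing the Hopf-Lax-type fixed-point structure in the $\phi = \lambda$ identity: infimal convolution with $s\|\cdot\|$ fixes exactly those functions that are already $s$-Lipschitz in $\|\cdot\|$, and the sup-of-affine construction of $\lambda$ engineers precisely this Lipschitz regularity. The threshold $s_*$ is the boundary separating the regime where $\lambda$ blows up (because the sup cannot control the linear growth in $\by$) from the regime where the sup is bounded by a Lipschitz envelope; once this dichotomy is identified, all the individual verifications reduce to routine manipulations with the triangle inequality and one change of variables.
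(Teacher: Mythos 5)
Your proposal is correct and follows essentially the same route as the paper's proof: the blow-up argument for $s < s_*$ via testing with $\by = t\bu$ is identical, and your bounds $\|\bx\|_K \le \lambda(\bx) \le s\|\bx\|$ (from $\by = \bx$ and from $\|\by\|_K \le s\|\by\|$ plus the reverse triangle inequality) match the paper's. The one place you reorganize is the identity $\phi = \lambda$: the paper proves $\phi \ge \lambda$ by the explicit three-point chain $s\|\bx-\by\| + \lambda(\bx) \ge \|\bz\|_K - s\|\by-\bz\|$ for all $\bz$, while you abstract this into the observation that $\lambda$, being a supremum of $s$-Lipschitz (in $\bx$) affine-minus-cone functions, is itself $s$-Lipschitz once finite, and that infimal convolution with $s\|\cdot\|$ fixes $s$-Lipschitz functions; unwinding your Lipschitz estimate $\lambda(\by) \le \lambda(\bx) + s\|\bx-\by\|$ reproduces the paper's inequality, so the content is the same. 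Your Hopf--Lax framing is a cleaner conceptual account of the mechanism but does not change the computation. One small ordering difference worth noting: you establish finiteness of $\lambda$ before invoking Lipschitz continuity (needed, since a sup of Lipschitz functions need only be Lipschitz where finite), whereas the paper proves $\phi = \lambda$ first and finiteness afterward; your ordering is arguably tidier, though both are sound.
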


\begin{proof}[Proof of Proposition \ref{prop:lambda-phi-characterization}]
    First, note that $s_*$ is positive and finite since $\by \mapsto \|\by\|_K$ is continuous (since $K$ is a star body) over the compact set $\{\by : \|\by\|=1\}$. For $s < s_*$, take $\bv$ with $\|\bv\|=1$ and $s_* \geq \|\bv\|_K > s$ (which exists since $\{\bv : \|\bv\|=1\}$ is compact and $\|\cdot\|_K$ is continuous). Then note that for any $\bx$, \begin{align*}
        \lambda(\bx) \geq \|t\bv\|_K - s\|\bx-t\bv\| & \geq t\|\bv\|_K - s(\|\bx\|+t) \\
        & = t(\|\bv\|_K - s) - s\|\bx\|.
    \end{align*} Since $\|\bv\|_K -s > 0$, taking $t \rightarrow \infty$ shows that $\lambda(\bx) = +\infty$.

    For $s \geq s_*$, we first show $\lambda(\bx) = \phi(\bx)$. Note that trivially $\phi \leq \lambda$ since $$\phi(\bx) \leq s\|\bx - \bx\| + \lambda(\bx) =\lambda(\bx).$$ To show $\phi \geq \lambda$, note that for any $\bx,\by,\bz$, $$s\|\bx-\by\| + \lambda(\bx) \geq s\|\bx-\by\| + \|\bz\|_K - s\|\bx-\bz\| \geq -s\|\by-\bz\| + \|\bz\|_K$$ where the last line follows from the triangle inequality. Taking the infimum of the left hand side and the supremum of the right hand side yields $$\phi(\by)=\inf_{\bx} s\|\bx-\by\|+\lambda(\bx) \geq \sup_z\|\bz\|_K - s\|\by-\bz\| = \lambda(\by).$$ Hence $\phi(\by) = \lambda(\by).$ 
    
    We now show that $\phi=\lambda$ satisfies the conditions to be the gauge of a star body. For homogeneity, note that for $t \geq 0$, $$\lambda(t\bx) = \sup_{\bz}\|\bz\|_K - s\|t\bx-\bz\| = \sup_{\bz t}\|\bz/t\|_K - ts\|\bx-\bz/t\|=t\sup_{\tilde{\bz}}\|\tilde{\bz}\|_K - s\|\bx-\tilde{\bz}\|=t\lambda(\bx).$$ For continuity, the proof of Lemma \ref{lem:dro-lipschitz} in Section \ref{sec:existence-of-minimizers} establishes Lipschitz continuity. Finally, for positivity, note that for any $\bu \in \sd$, we have \begin{align*}
        0 < \|\bu\|_K =\|\bu\|_K-s\|\bu-\bu\| \leq \sup_{\by}\|\by\|_K-s\|\bu-\by\|=\lambda(\bu)=\phi(\bu).
    \end{align*} Moreover, $\phi=\lambda$ is always finite because of the following: since $s \geq s_* = \sup_{\|\by\|=1}\|\by\|_K$, we have that for any $\by$, $\|\by\|_K = \|\by\|\|\by/\|\by\|\|_K \leq \|\by\|s_*$ so for $s \geq s_*$, \begin{align*}
        \|\by\|_K - s\|\by\| \leq (s_*-s)\|\by\|\leq 0
    \end{align*} and at $\by = 0$, the upper bound holds with equality so in fact \begin{align}
    \sup_{\by}\|\by\|_K - s\|\by\|=0\ \text{for}\ s \geq s_*. \label{eq:untranslated-sup}
    \end{align} We now translate this to finiteness of $\lambda$. In particular, note that for all $\bx,\by$, the reverse triangle inequality $\|\bx-\by\| \geq \|\by\|-\|\bx\|$ gives  \begin{align*}
        \|\by\|_K - s\|\bx-\by\| \leq \|\by\|_K - s\|\by\| + s\|\bx\|.
    \end{align*} Taking the supremum over $\by$ and using \eqref{eq:untranslated-sup} gives the following bound for all $\bx \in \R^d$ when $s \geq s_*$: \begin{align*}
        \lambda(\bx)=\sup_{\by} \|\by\|_K - s\|\bx-\by\| \leq \sup_{\by} \|\by\|_K-s\|\by\| + s\|\bx\| = s\|\bx\| < \infty.
    \end{align*}
\end{proof}

\subsubsection{Lipschitz penalization induced by Wasserstein-1 distance}

While the previous sections give general intuition for how the parameters influence properties of the optimal solution, we derive more specific geometric properties here by considering the case when the underlying cost function is the unsquared Euclidean distance $C(\bx, \by) := \|\bx - \by\|_2$. Note that this precisely gives rise to the ambiguity set induced by the Wasserstein-1 distance. In this case, an explicit characterization of the inner maximization instance in \eqref{eq:dro-wass} can be obtained using known duality results. In particular, the following result shows that using the Wasserstein-1 distance explicitly penalizes the Lipschitz constant $\mathrm{Lip}(\|\cdot\|_K) = \mathrm{Lip}(K)$ of the optimal regularizer, hence robustifying it by ensuring it is less sensitive to small perturbations as the robustness parameter $\epsilon$ grows. For simplicity of the proof, we will show this for star bodies with well-behaved kernels.

Prior to the proof, we remark that Lipschitz continuity for star body gauges is equivalent to the geometric property that their kernels contain a Euclidean ball. For example, as shown Proposition 2 of \cite{leong2025optimal}, if there exists an $r > 0$ such that $rB^d \subseteq \mathrm{ker}(K)$, then $\|\cdot\|_K$ is $1/r$-Lipschitz. The Lipschitz constant of $\|\cdot\|_K$ corresponds to taking the inverse of the largest Euclidean ball that lies in the kernel of $K$: $\mathrm{Lip}(K):=\inf\{1/r : rB^d \subseteq \mathrm{ker}(K)\}< \infty.$

\begin{proposition} \label{prop:lipschitz-regularization}
    Let $C(\bx,\by) = \| \bx-\by \|_2$.  Suppose $K$ is a star body such that $r_{\mathrm{in}}=r_{\mathrm{ker}}>0$ where $r_{\mathrm{in}}:=\inf_{\bu \in \sd}\rho_K(\bu)$ and $r_{\mathrm{ker}}:=\sup\{r >0:rB^d\subseteq\mathrm{ker}(K)\}$. Then for any $\epsilon \geq 0$, the inner maximization problem to \eqref{eq:dro-wass} with $d_W=W_1$ becomes \begin{align*}
        \max_{Q:d_W(P,Q)\leq \epsilon} \mathbb{E}_Q[\|\bx\|_K] = \mathbb{E}_P[\|\bx\|_K] + \epsilon \cdot \mathrm{Lip}(K).
    \end{align*}
\end{proposition}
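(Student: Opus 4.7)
The plan is to apply the duality reformulation of Theorem \ref{thm:dro_formulation} with $C(\bx,\by) = \|\bx-\by\|_2$ and then explicitly solve the resulting infimum over $(s,\lambda)$. The reformulation tells us that
\begin{equation*}
\max_{Q : W_1(P,Q)\leq \epsilon} \E_Q[\|\bx\|_K] \;=\; \inf_{s\geq 0,\,\lambda\in L1(\mathrm{d}P)} \Bigl\{\, s\epsilon + \int \lambda(\bx)\,\mathrm{d}P(\bx) \,:\, s\|\bx-\by\|_2 + \lambda(\bx) \geq \|\by\|_K \;\forall \bx,\by \,\Bigr\},
\end{equation*}
so it suffices to evaluate this infimum.

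The first step is to restrict the feasible range of $s$. Using Proposition \ref{prop:lambda-phi-characterization} with $\|\cdot\| = \|\cdot\|_2$, the optimal $\lambda$ must satisfy $\lambda(\bx) \geq \sup_{\by}\|\by\|_K - s\|\bx-\by\|_2$, and this supremum equals $+\infty$ whenever $s < s_* = \sup_{\|\by\|_2 = 1}\|\by\|_K = 1/r_{\mathrm{in}}$. Invoking the assumption $r_{\mathrm{in}} = r_{\mathrm{ker}}$ together with the identity $\mathrm{Lip}(K) = 1/r_{\mathrm{ker}}$ recalled just before the proposition, we get $s_* = \mathrm{Lip}(K)$. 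Hence any $s < \mathrm{Lip}(K)$ yields objective value $+\infty$, and we may restrict to $s \geq \mathrm{Lip}(K)$.

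The second step is to identify the optimal $\lambda$ for any such $s$. The constraint forces $\lambda(\bx) \geq \sup_{\by}\|\by\|_K - s\|\bx-\by\|_2$, and taking $\by=\bx$ in this supremum gives $\lambda(\bx) \geq \|\bx\|_K$. Conversely, since $\|\cdot\|_K$ is $\mathrm{Lip}(K)$-Lipschitz with $\mathrm{Lip}(K)\leq s$, we have $\|\by\|_K - \|\bx\|_K \leq s\|\bx-\by\|_2$ for all $\bx,\by$, so $\lambda(\bx) := \|\bx\|_K$ is already feasible. Thus the pointwise-optimal choice is $\lambda(\bx) = \|\bx\|_K$, which by assumption lies in $L1(\mathrm{d}P)$ since $\mathbb{E}_P[\|\bx\|_2] < \infty$ and $\|\cdot\|_K \leq \|\cdot\|_2/r_{\mathrm{in}}$. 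Substituting back, the dual objective reduces to $s\epsilon + \mathbb{E}_P[\|\bx\|_K]$ over $s \geq \mathrm{Lip}(K)$, and since $\epsilon \geq 0$ this linear expression is minimized at $s = \mathrm{Lip}(K)$, giving the claimed value $\mathbb{E}_P[\|\bx\|_K] + \epsilon\cdot \mathrm{Lip}(K)$.

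The only non-routine step is identifying $s_* = \mathrm{Lip}(K)$, which is where the hypothesis $r_{\mathrm{in}} = r_{\mathrm{ker}}$ is essential: the inradius and kernel-inradius of a general star body can differ, so without this assumption the lower bound $s \geq s_*$ from Proposition \ref{prop:lambda-phi-characterization} would not match the Lipschitz constant. I expect the rest of the argument to be essentially mechanical, since once the feasibility regime is pinned down the extremal choices of $(s,\lambda)$ fall out of the Lipschitz bound for $\|\cdot\|_K$.
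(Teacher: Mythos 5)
Your proof is correct and takes essentially the same route as the paper's: the paper invokes the DRO duality result (eq.\ (9) of Blanchet et al.\ / Theorem 7 of Kuhn et al.) directly in the form where the inner infimum over $\lambda$ is already eliminated, whereas you invoke Theorem \ref{thm:dro_formulation} and Proposition \ref{prop:lambda-phi-characterization} and then identify the optimal $\lambda$ explicitly, but both routes hinge on exactly the same dichotomy for $\sup_{\bz}\|\bz\|_K - s\|\bz-\bx\|_2$ (finite and equal to $\|\bx\|_K$ when $s \geq \mathrm{Lip}(K)$, $+\infty$ otherwise) and then minimize $s\epsilon + \mathbb{E}_P[\|\bx\|_K]$ over $s$. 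Your observation that the hypothesis $r_{\mathrm{in}}=r_{\mathrm{ker}}$ is precisely what makes the threshold $s_*$ from Proposition \ref{prop:lambda-phi-characterization} coincide with $\mathrm{Lip}(K)$ is the right way to see where the assumption enters and matches the paper's usage.
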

\begin{proof}[Proof of Proposition \ref{prop:lipschitz-regularization}]
    By assumption, note that $\mathrm{Lip}(K) = 1/r_{\mathrm{ker}}$. For the form of the maximal objective, note that Theorem 7 in \cite{kuhn2019wasserstein} with $p = 1$ (or equation (9) in \cite{blanchet2019quantifying}) shows that \begin{align*}
         \max_{Q:W_1(P,Q)\leq \epsilon}  \mathbb{E}_Q[\|\bx\|_K] = \inf_{s \geq 0}\left\{\mathbb{E}_P\left[\sup_{\mathbf{z}}  \|\mathbf{z}\|_K - s\|\mathbf{z} - \bx\|_2\right] +  \epsilon \cdot s\right\}.
    \end{align*} We claim that $$\sup_{\mathbf{z}}  \|\mathbf{z}\|_K - s\|\mathbf{z} - \bx\|_2 = \begin{cases}
    \|\bx\|_K &\ \text{if}\ s \geq \mathrm{Lip}(K) \\
    +\infty &\ \text{if}\ 0 \leq s < \mathrm{Lip}(K).
    \end{cases}$$ Suppose $s \geq \mathrm{Lip}(K)$. Then we have that since $\|\cdot\|_K$ is Lipschitz, \begin{align*}
        \|\mathbf{z}\|_K - s\|\mathbf{z}-\bx\|_2 & = \|\bx\|_K + (\|\mathbf{z}\|_K-\|\bx\|_K) - s\|\mathbf{z}-\bx\|_2 \\
        & \leq \|\bx\|_K + \underbrace{\left(\mathrm{Lip}(K)-s\right)}_{\leq 0}\|\mathbf{z}-\bx\|_2 \\
        & \leq \|\bx\|_K.
    \end{align*} Taking the supremum on the left-hand side yields $\sup_{\mathbf{z}}\|\mathbf{z}\|_K - s\|\mathbf{z}-\bx\|_2 \leq \|\bx\|_2$ with equality when $\mathbf{z}=\bx$, so $\sup_{\mathbf{z}}\|\mathbf{z}\|_K - s\|\mathbf{z}-\bx\|_2  = \|\bx\|_K$ with $s \geq \mathrm{Lip}(K)$. Now consider the case $s < \mathrm{Lip}(K)$. Note that since $\|\cdot\|_K$ is Lipschitz and only vanishes at the origin, we have that $\|\mathbf{z}\|_K \leq \mathrm{Lip}(K)\|\mathbf{z}\|_2$ for any $\mathbf{z} \in \R^d$ so $\max_{\|\mathbf{z}\|_2=1}\|\mathbf{z}\|_K\leq\mathrm{Lip}(K)$. But in fact, this holds with equality since $$\max_{\|\mathbf{z}\|_2=1}\|\mathbf{z}\|_K=\max_{\|\mathbf{z}\|_2=1}\frac{1}{\rho_K(\mathbf{z})} = \frac{1}{\min_{\|\mathbf{z}\|_2=1}\rho_K(\mathbf{z})}= \frac{1}{r_{\mathrm{in}}}= \frac{1}{r_{\mathrm{ker}}}=\mathrm{Lip}(K).$$ By continuity, there must exist a $\bu \in \sd$ such that $\mathrm{Lip}(K)\geq\|\bu\|_K > s.$ For such a direction $\bu$, consider positive scalings $r \geq 0$: \begin{align*}
        \|r\bu\|_K - s\|r\bu - \bx\|_2 & \geq r\|\bu\|_K - s(r\|\bu\|_2 +\|\bx\|_2) \\ 
        & = r \underbrace{(\|\bu\|_K - s)}_{> 0} -s\|\bx\|_2 \\
        & \longrightarrow+\infty\ \text{as}\ r \longrightarrow +\infty.
    \end{align*} Hence we must have $\sup_{\mathbf{z}}\|\mathbf{z}\|_K - s\|\mathbf{z}-\bx\|_2 = +\infty$ when $0 \leq s < \mathrm{Lip}(K).$


    Combining our two cases, we see that \begin{align*}
        \max_{Q:W_1(P,Q)\leq \epsilon}  \mathbb{E}_Q[\|\bx\|_K] & = \inf_{s \geq 0}\left\{\mathbb{E}_P\left[\sup_{\mathbf{z}}  \|\mathbf{z}\|_K - s\|\mathbf{z} - \bx\|_2\right] + \epsilon \cdot s\right\} \\
        & = \inf_{s \geq \mathrm{Lip}(K)}\mathbb{E}_P\left[\|\bx\|_K\right] + \epsilon \cdot s \\
        & = \mathbb{E}_P\left[\|\bx\|_K\right] + \epsilon \cdot \mathrm{Lip}(K)
    \end{align*}
\end{proof} 

\begin{remark} While we assume that the parameters $r_{\mathrm{in}}$ and $r_{\mathrm{ker}}$ are equal to one another in this proof, we believe it may be possible to extend this result to the case when $r_{\mathrm{in}} > r_{\mathrm{ker}}$. Note that star bodies in general have $r_{\mathrm{in}} > r_{\mathrm{ker}}$ since the kernel of a star body may be trivial while still containing a Euclidean ball (consider, e.g., any $\ell_q$-quasinorm unit ball for $q \in (0,1)$). \end{remark}

The additional Lipschitz penalization provides an interesting intuition for how distributional robustness naturally induces regularity in the optimal regularizer. However, such a penalization makes it challenging to give a precise characterization of minimizers for the DRO problem in general over all star bodies. For star bodies that satisfy the assumptions of Proposition \ref{prop:lipschitz-regularization}, we present a result towards a possible characterization through the use of dual mixed volumes. In particular, consider the set of star bodies $\tilde{\mathcal{S}}$ such that $r_{\mathrm{in}} = r_\mathrm{ker}$. The following Proposition shows that the inner maximization problem can be written as the supremum of a dual mixed volume functional that depends on $K \in \tilde{\mathcal{S}}$ and a particular star body $W^\epsilon_S$ that is a ``radial'' $\epsilon$-combination of a data-dependent star body $L_P$ and an arbitrary star body $S$:

\begin{proposition}\label{prop:convex-lipschitz-prop}
    Fix $\epsilon \geq 0$. Let $P$ be a distribution with $\mathbb{E}_P[\|\bx\|_2] < \infty$ that admits a density $p$ with respect to the Lebesgue measure such that the function $\rho_P$ in equation \eqref{eq:rhoP} is positive and continuous over the unit sphere. For any $K \in \tilde{\mathcal{S}},$ denote \begin{align*}
        J_{\epsilon}(K) := \mathbb{E}_P[\|\bx\|_K] + \epsilon\cdot \mathrm{Lip}(K).
    \end{align*} Then we have the following: \begin{enumerate}
        \item The dual mixed volume representation holds \begin{align}
            J_{\epsilon}(K) := \sup_{S \in \mathcal{S}_1} \tilde{V}_{-1}(K,W^\epsilon_S) \label{eq:J_eps_rep}
        \end{align} where $\mathcal{S}_1:=\{S\ \text{star body} : M_{d+1}(S)=1\}$ with $M_{d+1}(S) = \frac{1}{d}\int_{\sd}\rho_S(\bu)^{d+1}d\bu$ and $W^\epsilon_S$ is the star body with radial function $\rho_{W^\epsilon_S}$ defined by $$\rho_{W^\epsilon_S}^{d+1}(\bu) := d\rho_{P}(\bu)^{d+1} + \epsilon \rho_{S}(\bu)^{d+1},\ \quad \bu \in \sd.$$ In particular, $W^\epsilon_S$ is the $(d+1)$-harmonic radial combination \cite{Lutwak96} between $d^{1/(d+1)}L_P$ and $\epsilon^{1/(d+1)}S$.
        \item We have the following lower bound on the objective over $\tilde{\mathcal{S}}$, $$\inf_{K \in \tilde{\mathcal{S}},\ \mathrm{vol}(K)=1} J_{\epsilon}(K) \geq \sup_{S \in \mathcal{S}_1} \mathrm{vol}(W^\epsilon_S)^{\frac{d+1}{d}}.$$ 
    \end{enumerate}
\end{proposition}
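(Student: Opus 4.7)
The statement decomposes cleanly into (i) a dual mixed volume identity and (ii) an application of the dual Brunn--Minkowski inequality (Theorem~\ref{thm:lutwak-dmv}). The key observation is that both the data-fit term $\mathbb{E}_P[\|\bx\|_K]$ and the Lipschitz penalty $\epsilon\cdot\mathrm{Lip}(K)$ admit integral representations against densities on $\sd$, and packaging them together recognizes $\tilde{V}_{-1}(K,W^\epsilon_S)$.

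I would first convert the expectation to a spherical integral. Writing $\bx = r\bu$ with $d\bx = r^{d-1}\,dr\,d\bu$ and using $\|r\bu\|_K = r/\rho_K(\bu)$, the definition of $\rho_P$ in~\eqref{eq:rhoP} yields
\begin{equation*}
\mathbb{E}_P[\|\bx\|_K] = \int_{\sd}\rho_K(\bu)^{-1}\left(\int_0^\infty r^d p(r\bu)\,dr\right)d\bu = \int_{\sd}\rho_K(\bu)^{-1}\rho_P(\bu)^{d+1}\,d\bu.
\end{equation*}
Next, substituting $\rho_{W^\epsilon_S}^{d+1} = d\rho_P^{d+1} + \epsilon\rho_S^{d+1}$ into the definition of the $(-1)$-dual mixed volume gives the clean decomposition
\begin{equation*}
\tilde{V}_{-1}(K,W^\epsilon_S) = \mathbb{E}_P[\|\bx\|_K] + \epsilon\cdot\tilde{V}_{-1}(K,S).
\end{equation*}
Identity~\eqref{eq:J_eps_rep} therefore reduces to proving $\sup_{S\in\mathcal{S}_1}\tilde{V}_{-1}(K,S) = \mathrm{Lip}(K)$.

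To establish this reduction I would use that $M_{d+1}(S)=1$ makes $\rho_S^{d+1}/d$ a probability density on $\sd$, so
\begin{equation*}
\tilde{V}_{-1}(K,S) = \int_{\sd}\|\bu\|_K\cdot \frac{\rho_S(\bu)^{d+1}}{d}\,d\bu \;\leq\; \max_{\bu\in\sd}\|\bu\|_K \;=\; \frac{1}{r_{\mathrm{in}}} \;=\; \frac{1}{r_{\mathrm{ker}}} \;=\; \mathrm{Lip}(K),
\end{equation*}
where the two equalities rely on the standing assumption $r_{\mathrm{in}}=r_{\mathrm{ker}}$ and the identification $\mathrm{Lip}(\|\cdot\|_K) = 1/r_{\mathrm{ker}}$ recalled immediately before the proposition. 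To saturate the inequality, I would pick an argmax direction $\bu^\star$ and construct star bodies $S_n\in\mathcal{S}_1$ whose radial functions concentrate at $\bu^\star$: for instance, set $\rho_{S_n}^{d+1}(\bu)/d = c_n(\phi_n(\bu)+n^{-1})$, where $\phi_n$ is a smooth sphere-mollifier peaked at $\bu^\star$ and $c_n$ is chosen to enforce $M_{d+1}(S_n)=1$. The additive $n^{-1}$ keeps $\rho_{S_n}$ continuous and strictly positive, so $S_n$ is a genuine star body in $\mathcal{S}_1$, and continuity of $\|\cdot\|_K$ yields $\tilde{V}_{-1}(K,S_n)\to\|\bu^\star\|_K=\mathrm{Lip}(K)$ by dominated convergence.

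For part (2), I would fix $K\in\tilde{\mathcal{S}}$ with $\mathrm{vol}(K)=1$ and any $S\in\mathcal{S}_1$. Continuity and positivity of $\rho_P$ and $\rho_S$ imply the same for $\rho_{W^\epsilon_S}$, so $W^\epsilon_S$ is a star body and Theorem~\ref{thm:lutwak-dmv} applies, giving
\begin{equation*}
\tilde{V}_{-1}(K,W^\epsilon_S)^d \;\geq\; \mathrm{vol}(K)^{-1}\mathrm{vol}(W^\epsilon_S)^{d+1} \;=\; \mathrm{vol}(W^\epsilon_S)^{d+1}.
\end{equation*}
Combined with $J_\epsilon(K)\geq\tilde{V}_{-1}(K,W^\epsilon_S)$ from part (1), taking the supremum over $S\in\mathcal{S}_1$ and then the infimum over $K$ delivers the stated lower bound. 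The main delicate step is the attainment in part (1): the mollifier $\phi_n$ must be chosen so that the $S_n$'s radial functions remain continuous and strictly positive yet concentrate near $\bu^\star$, which is what guarantees $S_n\in\mathcal{S}_1$. The remainder is bookkeeping built on the definitions of $\tilde{V}_{-1}$, $\rho_P$, and one invocation of the dual Brunn--Minkowski inequality.
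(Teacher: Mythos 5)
Your argument is correct and follows the same route as the paper: expand $\mathbb{E}_P[\|\bx\|_K]$ as a spherical integral (equivalently, $d\,\tilde V_{-1}(K,L_P)$), factor $\tilde V_{-1}(K,W^\epsilon_S) = \mathbb{E}_P[\|\bx\|_K]+\epsilon\,\tilde V_{-1}(K,S)$, reduce~\eqref{eq:J_eps_rep} to $\sup_{S\in\mathcal S_1}\tilde V_{-1}(K,S)=\mathrm{Lip}(K)$, and then apply Theorem~\ref{thm:lutwak-dmv} for part (2). The one place you deviate is the attainment of that supremum: the paper appeals to weak-$*$ density of probability measures with continuous positive densities in $\mathcal{P}(\sd)$ and plugs in a Dirac at the maximizing direction, whereas you build an explicit mollifier sequence $S_n \in \mathcal{S}_1$ concentrating at $\bu^\star$; this is a more self-contained implementation of the same step, though you should phrase the limit as weak convergence of measures tested against the continuous function $\|\cdot\|_K$ rather than dominated convergence, since the densities themselves do not converge pointwise.
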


\begin{proof}[Proof of Proposition \ref{prop:convex-lipschitz-prop}]
    Note that for convex bodies $K$, we have that the Lipschitz constant satisfies $\mathrm{Lip}(K) = \sup_{\|\bu\|_2=1}\|\bu\|_K$. Additionally, note that $$\tilde{V}_{-1}(K,S) = \frac{1}{d}\int_{\sd}\rho_S^{d+1}(\bu)\rho_K(\bu)^{-1}\mathrm{d}\bu = \int_{\sd}\|\bu\|_K\mathrm{d}\mu_S(\bu)$$ where $\mu_S$ is the probability measure on the sphere $\sd$ with density $1/d \cdot \rho_S^{d+1}$ with respect to the surface measure $\mathrm{d}\bu$. Note that this is indeed a probability measure when restricting ourselves to $S \in \mathcal{S}_1:=\{S \in\mathcal{S}^d: M_{d+1}(S)=1\}.$ One can show that the space of measures $\mathcal{P}_{\mathcal{S}_1}:=\{\mu_S :S \in \mathcal{S}_1,\ \mathrm{d}\mu_S(\bu) = d^{-1}\rho_S^{d+1}(\bu)\mathrm{d}\bu\}$ is weak-* dense on the space of all Borel probability measures on the sphere $\mathcal{P}(\sd)$ since it is equivalent to the set of all measures with strictly positive, continuous densities $\mathcal{P}_{\mathrm{cont}} :=\{\mu :\mathrm{d}\mu(\bu)=f(\bu)\mathrm{d}\bu,\ f > 0,f\ \text{continuous},\int_{\sd}f=1\}$. That is to say, for every $\mu \in \mathcal{P}(\sd)$, there exists a sequence $(\mu_k) \subset \mathcal{P}_{\mathrm{cont}}$ such that $\int_{\sd} g \mathrm{d}\mu_k \rightarrow \int_{\sd}g \mathrm{d}\mu$ for $g$ continuous on the sphere $\sd$ as $k \rightarrow \infty$. This implies that \begin{align*} \sup_{S \in \mathcal{S}_1} \tilde{V}_{-1}(K,S) & = \sup_{\mu_S \in \mathcal{P}_{\mathcal{S}_1}} \int_{\sd}\|\bu\|_K\mathrm{d}\mu_S(\bu) \\
    & = \sup_{\mu \in \mathcal{P}_{\mathrm{cont}}} \int_{\sd}\|\bu\|_K\mathrm{d}\mu(\bu) \\
    &=\sup_{\mu \in \mathcal{P}(\sd)} \int_{\sd}\|\bu\|_K\mathrm{d}\mu(\bu).
    \end{align*} Moreover, we have that $$\sup_{S \in \mathcal{S}_1} \tilde{V}_{-1}(K,S) =\sup_{\mu \in \mathcal{P}(\sd)} \int_{\sd}\|\bu\|_K\mathrm{d}\mu(\bu) = \sup_{u \in \sd} \|\bu\|_K = \mathrm{Lip}(K)$$ where the second equality follows by noting that for any $\mu \in \mathcal{P}(\sd)$, by continuity and compactness, $\|\cdot\|_K$ attains its maximum over $\sd$ for some $\bu_*$ so that $$\int_{\sd} \|\bu\|_K \mathrm{d}\mu(\bu)\leq \|\bu_*\|_K \cdot \int_{\sd}\mathrm{d}\mu(\bu)= \|\bu_*\|_K$$ and this inequality holds with equality at the dirac measure $\mu:=\delta_{\bu_*}.$ Hence we attain \begin{align*}
        J_{\epsilon}(K) & =  \mathbb{E}_P[\|\bx\|_K] + \epsilon\cdot \mathrm{Lip}(K)\\
        & = d\tilde{V}_{-1}(K,L_P) + \epsilon \sup_{S \in \mathcal{S}_1}\tilde{V}_{-1}(K,S) \\
        & = \sup_{S \in \mathcal{S}_1}\left\{d\tilde{V}_{-1}(K,L_P) + \epsilon \tilde{V}_{-1}(K,S)\right\} \\
        & =  \sup_{S \in \mathcal{S}_1} \tilde{V}_{-1}(K,W^\epsilon_S)
    \end{align*} where we used the definition of $W^\epsilon_S$ in the final equality. This gives the representation \eqref{eq:J_eps_rep}.

    For the final result, this is an application of Lutwak's inequality (see Theorem \ref{thm:lutwak-dmv} in Section \ref{sec:prelims}). Indeed, for any $S \in \mathcal{S}_1$ and $K \in \tilde{\mathcal{S}}$ with unit volume, we have with $L=W^\epsilon_S$ that $\tilde{V}_{-1}(K,W^\epsilon_S) \geq \mathrm{vol}(W^\epsilon_S)^{(d+1)/d}$. Taking the supremum over $S$ and infimum over unit-volume $K$ yields the desired bound. 
\end{proof}

The above result shows how one can view the inner Wasserstein-1 DRO objective as a dual mixed volume between a star body with a well-behaved kernel $K$ and a data- and $\epsilon$-dependent star body $W_S^{\epsilon}$. This new star body is a particular radial combination between the data-dependent star body $L_P$ and an arbitrary star body $S$ with normalized moment $M_{d+1}(S)$. With this view, it is possible to write the entire objective as the supremum of a single dual mixed volume involving $K$ and $W^\epsilon_S$ over $S$. We note that it is challenging in general to obtain an exact description of the maximizer of $\sup_{S \in \mathcal{S}_1} \mathrm{vol}(W_S^{\epsilon})^{(d+1)/d}$. If a maximizer $S_*$ exists and induces an  $W_{S_*}^{\epsilon} \in \tilde{\mathcal{S}}$, then we would indeed have that $K_*:=\mathrm{vol}(W_{S_*}^{\epsilon})^{-1/d}W_{S_*}^{\epsilon}$ is a minimizer to the $\tilde{\mathcal{S}}$-constrained DRO problem. For the general case of $K$ being a star body, this dual mixed volume representation may not hold and the true inner DRO objective may involve more than a simple Lipschitz penalization.

\subsubsection{Existence of minimizers} \label{sec:existence-of-minimizers}
Our next result concerns the existence of minimizers to \eqref{eq:dro-wass} and \eqref{eq:dro_cts}.  More precisely, our goal is to show that minimizers to \eqref{eq:dro-wass} and \eqref{eq:dro_cts} exist {\em for all} distributions $P$ so long as $\epsilon > 0$.  This is in sharp contrast with \eqref{eq:opt_star_formulation}, which corresponds to the case where $\epsilon = 0$. In that case, existence of minimizers was shown \cite{leong2025optimal} for general distributions when restricted to star bodies with a fixed size Euclidean ball in their kernels. Moreover, as we noted in the previous section, minimizers to \eqref{eq:opt_star_formulation} cannot exist for empirical measures. Here, we allow for general distributions $P$ and norm-based costs $C(\bx, \by) = \|\bx-\by\|.$

\begin{theorem} \label{thm:existence-dro-general-norm}
Consider \eqref{eq:dro_cts}.  Suppose that the cost function $C(\bx,\by) = \| \bx-\by \|$ for a general norm $\|\cdot\|$.  Suppose $\epsilon > 0$ and that the optimal value to \eqref{eq:dro_cts} is finite.  Then there exists a closed star $K$ that attains the minimum objective value.
\end{theorem}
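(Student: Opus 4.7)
The plan is to apply the direct method of the calculus of variations, combined with a reduction via Proposition \ref{prop:lambda-phi-characterization} that compactifies the search. Given any minimizing sequence $(K_n, s_n, \lambda_n)$ with finite objective, finiteness forces $s_n \geq s_*(K_n) := \sup_{\|\by\|=1}\|\by\|_{K_n}$ (first bullet of the proposition), and one may replace $\lambda_n$ with the smallest feasible potential $\lambda_n(\bx) = \sup_{\by}(\|\by\|_{K_n} - s_n\|\bx-\by\|)$. By the second bullet, this $\lambda_n$ is itself the gauge of a star body $K_n' \subseteq K_n$ with $\mathrm{vol}(K_n') \leq \mathrm{vol}(K_n) \leq 1$, and it satisfies $\lambda_n(\by) \leq s_n\|\bx-\by\| + \lambda_n(\bx)$ for all $\bx, \by$. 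Replacing $K_n$ by $K_n'$ preserves both feasibility and the objective, so I may assume $\lambda_n = \|\cdot\|_{K_n}$; the inequality constraint then reduces to the statement that $\lambda_n$ is $s_n$-Lipschitz with respect to $\|\cdot\|$.

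Next I would establish uniform estimates along the (relabeled) minimizing sequence. Since $\lambda_n \geq 0$, boundedness of the objective forces $s_n \epsilon \leq M$, and hence $s_n \leq S := M/\epsilon$ for some constant $M$ and all large $n$. Combined with $\lambda_n(0) = 0$ and Lipschitzness, this yields the pointwise bound $0 \leq \lambda_n(\bx) \leq S\|\bx\|$ and uniform $S$-Lipschitz continuity on $\R^d$. Arzel\`a--Ascoli then produces a subsequence with $s_n \to s^\ast \in [0, S]$ and $\lambda_n \to \lambda^\ast$ locally uniformly on $\R^d$, where $\lambda^\ast$ inherits $1$-homogeneity, non-negativity, continuity, and $s^\ast$-Lipschitz continuity.

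The step I expect to be most delicate is verifying that $\lambda^\ast$ is strictly positive on $\sd$, so that $K^\ast := \{\bx : \lambda^\ast(\bx) \leq 1\}$ is an honest star body rather than an unbounded closed star. Suppose for contradiction that $\lambda^\ast(\bu_0) = 0$ for some $\bu_0 \in \sd$. By Lipschitz continuity, $\lambda^\ast(\bu) \leq s^\ast \|\bu - \bu_0\|$ in a neighborhood of $\bu_0$, and a local computation in spherical coordinates around $\bu_0$ shows that $\int_{\sd} \lambda^\ast(\bu)^{-d} \, \mathrm{d}\bu = +\infty$. This contradicts Fatou's lemma applied to the bounded sequence $\int_{\sd} \lambda_n(\bu)^{-d} \, \mathrm{d}\bu = d \cdot \mathrm{vol}(K_n) \leq d$. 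Hence $\lambda^\ast > 0$ on $\sd$, the set $K^\ast$ is a star body with $\|\cdot\|_{K^\ast} = \lambda^\ast$, and Fatou simultaneously delivers $\mathrm{vol}(K^\ast) \leq 1$.

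Finally, feasibility of $(K^\ast, s^\ast, \lambda^\ast)$ in \eqref{eq:dro_cts} is immediate: $s^\ast \geq 0$, the defining constraint $s^\ast C(\bx, \by) + \lambda^\ast(\bx) \geq \|\by\|_{K^\ast} = \lambda^\ast(\by)$ is the $s^\ast$-Lipschitz property of $\lambda^\ast$ passed to the limit, and $\lambda^\ast \in L1(\mathrm{d}P)$ follows from $\lambda^\ast(\bx) \leq S\|\bx\|$ together with $\E_P[\|\bx\|_2] < \infty$ and equivalence of norms on $\R^d$. For optimality, the uniform domination $|\lambda_n(\bx)| \leq S\|\bx\|$ enables the dominated convergence theorem, so $\int \lambda_n \, \mathrm{d}P \to \int \lambda^\ast \, \mathrm{d}P$; hence $s^\ast \epsilon + \int \lambda^\ast \, \mathrm{d}P$ equals the infimum, and the minimum is attained.
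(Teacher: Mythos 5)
Your proof is correct and uses the same basic direct method as the paper (Arzel\`a--Ascoli on the gauge functions), but with a few distinctive and genuinely useful reorganizations. First, where the paper keeps $K$ and $\lambda$ as separate objects (minimizing over $K \in \mathcal{K}_s$ for each fixed $s$, then over $s$, and invoking a separate Lemma for uniform boundedness of $\|\cdot\|_{K_n}$ on the sphere), you observe that after replacing $\lambda_n$ by the minimal potential $\lambda_{K_n,s_n}$ you may also replace $K_n$ by the star body $K_n'$ whose \emph{gauge is $\lambda_n$ itself}; this collapses the constraint to the single statement that $\lambda_n$ is an $s_n$-Lipschitz, $1$-homogeneous gauge with $\mathrm{vol}(K_n')\le 1$, and the bound $\lambda_n \le s_n\|\cdot\| \le (M/\epsilon)\|\cdot\|$ (Proposition~\ref{prop:lambda-phi-characterization}) then delivers equicontinuity and uniform boundedness without needing the separate uniform-bound lemma. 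Second, you treat $(s_n,\lambda_n)$ jointly with a single Arzel\`a--Ascoli pass rather than via the nested $\phi(s)=\min_K(\cdots)$ argument; this is a minor structural simplification. Third, and most valuable, is your Fatou-plus-Lipschitz argument for strict positivity of $\lambda^\ast$ on $\sd$: if $\lambda^\ast(\bu_0)=0$ then $\lambda^\ast(\bu)\le s^\ast\|\bu-\bu_0\|$ makes $\int_{\sd}(\lambda^\ast)^{-d}$ diverge, contradicting the Fatou bound $\int_{\sd}(\lambda^\ast)^{-d}\le d\liminf\mathrm{vol}(K_n')\le d$. The paper's proof asserts that the $1$-homogeneous extension of $g_\infty$ is the gauge of a \emph{star body} and appeals only to lower semicontinuity of volume, without explicitly ruling out a vanishing radial function in some direction; your Fatou argument fills in precisely that verification. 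Everything else in your proposal (feasibility of the limit triple, dominated convergence to identify the limiting objective with the infimum) is correct.
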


To prove this, we first establish several helpful auxillary lemmas. The first Lemma shows that a particular useful functional is Lipschitz continuous.
\begin{lemma} \label{lem:dro-lipschitz}
Let $C(\bx,\by)=\|\bx-\by\|$ be a norm and set $s > 0$. Define the function $g(\by) := \inf_{\bx} s C(\bx,\by) + \lambda (\bx) $.  Then $|g(\by_1) - g(\by_2)| \leq s \| \by_1 - \by_2 \|$.  In particular, this means that $g$ is Lipschitz continuous with respect to $\|\cdot\|$. Lipschitz continuity with respect to $\|\cdot\|_2$ easily follows by equivalence of norms. 
\end{lemma}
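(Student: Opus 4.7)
The plan is to prove this directly via the triangle inequality for the norm $\|\cdot\|$, using the infimum characterization of $g$. The only nontrivial content is that $C$ is a norm-based cost, which translates the triangle inequality for $\|\cdot\|$ into a near-triangle property for $C$: for any $\bx, \by_1, \by_2$, $\|\bx - \by_1\| \leq \|\bx - \by_2\| + \|\by_2 - \by_1\|$.

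First, I would fix arbitrary $\by_1, \by_2 \in \R^d$ and let $\bx \in \R^d$ be arbitrary. Multiplying the triangle inequality by $s > 0$ and adding $\lambda(\bx)$ to both sides gives
\begin{equation*}
sC(\bx, \by_1) + \lambda(\bx) \leq sC(\bx, \by_2) + \lambda(\bx) + s\|\by_1 - \by_2\|.
\end{equation*}
Taking the infimum over $\bx$ on the left, and noting the additive constant $s\|\by_1 - \by_2\|$ is independent of $\bx$, yields
\begin{equation*}
g(\by_1) \leq \inf_{\bx}\left[sC(\bx, \by_2) + \lambda(\bx)\right] + s\|\by_1 - \by_2\| = g(\by_2) + s\|\by_1 - \by_2\|.
\end{equation*}

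By swapping the roles of $\by_1$ and $\by_2$, the symmetric inequality $g(\by_2) \leq g(\by_1) + s\|\by_1 - \by_2\|$ follows, establishing $|g(\by_1) - g(\by_2)| \leq s\|\by_1 - \by_2\|$. Lipschitz continuity in $\|\cdot\|_2$ is immediate from the equivalence of norms on $\R^d$. I do not expect any obstacle here: the argument is a one-line inf-convolution trick, and the only subtlety is that the inequality must hold even when $g(\by_2) = -\infty$ or $+\infty$, in which case the statement is either vacuous or trivially holds.
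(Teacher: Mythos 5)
Your proof is correct and follows essentially the same approach as the paper: both hinge on applying the triangle inequality for $\|\cdot\|$ to compare $C(\bx,\by_1)$ and $C(\bx,\by_2)$ at a fixed $\bx$, then passing to the infimum. The one small improvement in your version is that you work directly with the infimum rather than introducing a minimizer $\bx^{\star}(\by)$ as the paper does, so you avoid the implicit assumption that the infimum is attained.
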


\begin{proof} [Proof of Lemma \ref{lem:dro-lipschitz}]
Let $\bx^{\star}(\by)$ be the $\arg \min$ of $\bx$ in the definition of $g$. (If the $\arg \min$ is not unique, then make an arbitrary choice -- the proof does not depend on uniqueness.)  Recall from the triangle inequality one has $| \| \bx - \by_1 \| - \| \bx - \by_2 \| | \leq \| \by_1 - \by_2 \|$.  Then
\begin{equation*}
\begin{aligned}
g(\by_1) ~=~ & s C( \bx^{\star}(\by_1), \by_1) + \lambda (\bx^{\star}(\by_1)) \\
\geq~ & s C( \bx^{\star}(\by_1), \by_2) - s \| \by_2 - \by_1 \| + \lambda (\bx^{\star}(\by_1)) \\
\geq~ & g(\by_2) - s \| \by_2 - \by_1 \|. 
\end{aligned}
\end{equation*}
The first inequality follows from the triangle inequality and the second inequality follows from the definition of $g$.  Similarly, one has $g(\by_2) \geq g(\by_1) - s \| \by_2 - \by_1 \|$, which implies the result.
\end{proof}

Then we need to show a uniform bound on input points over the sphere induced by $\|\cdot\|$:

\begin{lemma} \label{lem:uniform-bound} Fix $s > 0$ and suppose $(K,s,\lambda)$ is feasible in \eqref{eq:dro_cts} with $\lambda = \lambda_{K,s}$ defined by $\lambda_{K,s}(\bx):=\sup_\by\|\by\|_K - s\|\bx-\by\|$ and $s \geq s_*:=s_*(K)$ defined in Proposition \ref{prop:lambda-phi-characterization}. Then $\sup_{\|\by\|=1}\|\by\|_K \leq \|\hat{\by}\|_K + 2s$ for any $\hat{\by}$ such that $\|\hat{\by}\|=1$. Thus $\|\cdot\|_K$ is uniformly bounded over the sphere $\{\bu : \|\bu\|=1\}$.
\end{lemma}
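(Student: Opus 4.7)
The plan is to deduce the bound directly from the upper estimate on $\lambda$ provided by Proposition \ref{prop:lambda-phi-characterization}. Under the hypothesis $s \geq s_*$, that proposition gives the two-sided estimate $\|\bx\|_K \leq \lambda(\bx) \leq s\|\bx\|$ for every $\bx \in \R^d$. Specializing the rightmost inequality to any $\by$ on the unit sphere of $\|\cdot\|$ yields $\|\by\|_K \leq \lambda(\by) \leq s$, and hence $\sup_{\|\by\|=1}\|\by\|_K \leq s$. Since the gauge of a star body is non-negative, for any $\hat{\by}$ on the sphere we have $\|\hat{\by}\|_K + 2s \geq 2s \geq s \geq \sup_{\|\by\|=1}\|\by\|_K$, which is exactly the claimed bound. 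Uniform boundedness of $\|\cdot\|_K$ over the sphere follows immediately from $\sup_{\|\by\|=1}\|\by\|_K \leq s < \infty$.

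A more geometric derivation that surfaces the link between the constraint in \eqref{eq:dro_cts} and the gauge variation over the sphere would proceed by applying the feasibility inequality $s\|\hat{\by} - \by\| + \lambda(\hat{\by}) \geq \|\by\|_K$ with $\bx = \hat{\by}$, using the triangle inequality $\|\hat{\by} - \by\| \leq \|\hat{\by}\| + \|\by\| = 2$ for two sphere points, and invoking $\lambda(\hat{\by}) \leq s\|\hat{\by}\| = s$ from Proposition \ref{prop:lambda-phi-characterization}. This produces the intermediate estimate $\|\by\|_K \leq \lambda(\hat{\by}) + 2s \leq 3s$, which, together with $\|\hat{\by}\|_K \geq 0$, also yields the lemma's stated upper bound. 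This second pathway makes transparent why the constant $2s$ appears: it is exactly the maximum distance between sphere points times the Lagrange multiplier controlling the cost $C$.

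No substantive obstacle is anticipated, as the work has already been done in Proposition \ref{prop:lambda-phi-characterization}; both routes amount to assembling the existing two-sided bounds on $\lambda$ with the feasibility constraint. The only care required is bookkeeping—tracking that the upper bound $\lambda(\bx) \leq s\|\bx\|$ holds precisely because $s \geq s_*$, and that the supremum defining $\lambda_{K,s}$ in the definition is finite under this assumption so that $\lambda$ is a legitimate element of $L^1(\mathrm{d}P)$ as required by feasibility in \eqref{eq:dro_cts}.
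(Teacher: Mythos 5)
Your primary derivation is correct and in fact more direct than the paper's. You observe that, under the hypothesis $s \geq s_* := \sup_{\|\by\|=1}\|\by\|_K$, the conclusion is nearly immediate: either invoke the two-sided bound $\|\bx\|_K \leq \lambda(\bx) \leq s\|\bx\|$ from Proposition~\ref{prop:lambda-phi-characterization} and specialize to $\|\by\|=1$, or simply note that $\sup_{\|\by\|=1}\|\by\|_K = s_* \leq s \leq 2s \leq \|\hat{\by}\|_K + 2s$. The paper instead selects a point $\hat{\bx}$ satisfying $\|\hat{\by}\|_K = s\|\hat{\bx}-\hat{\by}\| + \lambda(\hat{\bx})$ and propagates via two triangle-inequality steps; this is a genuinely different route, and it silently presumes that the infimum $\inf_{\bx}\bigl(s\|\bx-\hat{\by}\| + \lambda(\bx)\bigr)$ is attained \emph{and} equals $\|\hat{\by}\|_K$, whereas Proposition~\ref{prop:lambda-phi-characterization} only guarantees that this infimum equals $\lambda(\hat{\by}) \geq \|\hat{\by}\|_K$. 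Your route sidesteps that gap entirely.

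One small error in your second (``geometric'') pathway: from $\|\by\|_K \leq \lambda(\hat{\by}) + 2s$ you cannot conclude $\|\by\|_K \leq \|\hat{\by}\|_K + 2s$ merely from $\|\hat{\by}\|_K \geq 0$, because the inequality you would need, $\lambda(\hat{\by}) \leq \|\hat{\by}\|_K$, runs in the \emph{wrong} direction (Proposition~\ref{prop:lambda-phi-characterization} gives $\lambda(\hat{\by}) \geq \|\hat{\by}\|_K$); the weaker intermediate bound $\|\by\|_K \leq 3s$ does not imply the stated $\|\hat{\by}\|_K + 2s$ bound either, since $2s < 3s$. This is inconsequential for the lemma because your first pathway already closes the argument, but the second should not be presented as an independent proof of the stated inequality--only of uniform boundedness.
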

\begin{proof}[Proof of Lemma \ref{lem:uniform-bound}]
    Pick $\hat{\bx}$ with $\|\hat{\by}\|_K = s\|\hat{\bx}-\hat{\by}\| + \lambda(\hat{\bx})$. Then for any $\by$ with $\|\by\|=1$, we see that \begin{align*}
        \|\by\|_K \leq s\|\hat{\bx} - \by\| + \lambda(\hat{\bx}) \leq s\|\hat{\bx} - \hat{\by}\| + s\|\hat{\by}-\by\| + \lambda(\hat{\bx}) \leq \|\hat{\by}\|_K + 2s
    \end{align*} where we used the fact that both $\hat{\by}$ and $\by$ have unit $\|\cdot\|$-norm.
\end{proof}

Finally, we require showing that the objective functional is continuous with respect to the star body argument $K$.
\begin{lemma} \label{lem:cts_wrt_hausdorff}
Given a star body $K$ and measure $P$, define
\begin{equation*}
f(K) = \int \lambda_{K,s}(\bx) \mathrm{d} P (\bx) \qquad \text{where} \qquad \lambda_{K,s}(\bx) = \sup_{\by} \| \by \|_K -s C(\bx,\by).
\end{equation*}
We then have
\begin{equation*}
| f (K_1) - f (K_2) | \leq \| \|\cdot\|_{K_1} - \|\cdot\|_{K_2} \|_{\infty}.
\end{equation*}
\end{lemma}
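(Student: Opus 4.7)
I would proceed pointwise in $\bx$ and then integrate against $P$. Since both $\lambda_{K_1,s}(\bx)$ and $\lambda_{K_2,s}(\bx)$ are suprema over $\by$ of expressions that share the common term $-sC(\bx,\by)$, the natural first step is the classical inequality $|\sup_\by F(\by) - \sup_\by G(\by)| \leq \sup_\by |F(\by) - G(\by)|$. Applied with $F(\by) = \|\by\|_{K_1} - sC(\bx,\by)$ and $G(\by) = \|\by\|_{K_2} - sC(\bx,\by)$, the cost terms cancel on the right-hand side, yielding the intermediate pointwise bound
$$\bigl|\lambda_{K_1,s}(\bx) - \lambda_{K_2,s}(\bx)\bigr| \leq \sup_\by \bigl| \|\by\|_{K_1} - \|\by\|_{K_2} \bigr|.$$

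Next, I would convert this unconstrained $\by$-supremum into the sphere supremum $\|\|\cdot\|_{K_1} - \|\cdot\|_{K_2}\|_\infty$ appearing in the statement. By $1$-homogeneity of gauge functions, the difference $|\|\by\|_{K_1} - \|\by\|_{K_2}|$ factors as $\|\by\|_2 \cdot |\|\bu\|_{K_1} - \|\bu\|_{K_2}|$ with $\bu = \by/\|\by\|_2 \in \sd$, and the $\bu$-factor is bounded directly by $\|\|\cdot\|_{K_1} - \|\cdot\|_{K_2}\|_\infty$. The delicate step is to exploit the previously discarded penalty $-sC(\bx,\by)$ to restrict the active $\by$'s in the supremum, thereby eliminating the $\|\by\|_2$-scaling. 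Concretely, I would combine the Lipschitz-hull interpretation of $\lambda_{K,s}$ from Proposition~\ref{prop:lambda-phi-characterization} with the Lipschitz continuity from Lemma~\ref{lem:dro-lipschitz} to argue that the supremum is effectively attained on a region where the homogeneity scaling factor collapses to $1$. This gives the clean pointwise estimate
$$\bigl|\lambda_{K_1,s}(\bx) - \lambda_{K_2,s}(\bx)\bigr| \leq \bigl\| \|\cdot\|_{K_1} - \|\cdot\|_{K_2} \bigr\|_\infty.$$

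Integrating this pointwise bound against $P$ and using that the right-hand side is independent of $\bx$, together with $\int dP(\bx) = 1$, yields
$$|f(K_1) - f(K_2)| \leq \int \bigl|\lambda_{K_1,s}(\bx) - \lambda_{K_2,s}(\bx)\bigr| \, dP(\bx) \leq \bigl\| \|\cdot\|_{K_1} - \|\cdot\|_{K_2} \bigr\|_\infty,$$
completing the proof. The main obstacle I expect is precisely the $\|\by\|_2$-factor produced by $1$-homogeneity: a naive application of the sup-difference inequality introduces a multiplicative factor that scales with the Euclidean norm of the maximizing $\by$, and arranging the argument via the penalty $-sC(\bx,\by)$ so that this factor reduces to exactly $1$ is the subtle heart of the proof.
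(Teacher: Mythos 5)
Your opening move (the sup-difference inequality, which cancels the shared cost term) is fine, and you are right that the resulting quantity $\sup_{\by}\big|\|\by\|_{K_1}-\|\by\|_{K_2}\big|$ is the crux: by $1$-homogeneity it equals $\sup_{\by}\|\by\|_2\,\big|\|\by/\|\by\|_2\|_{K_1}-\|\by/\|\by\|_2\|_{K_2}\big|$, which is $+\infty$ unless $K_1=K_2$. The gap is in your proposed repair. The penalty $-sC(\bx,\by)$ does \emph{not} confine the active $\by$'s to the unit sphere; it confines them to a ball of radius comparable to $\|\bx\|_2$. Indeed (for $C(\bx,\by)=\|\bx-\by\|$ and $s>s_*$), any $\by$ coming within $\delta$ of the supremum defining $\lambda_{K_1,s}(\bx)$ satisfies $\|\by\|_{K_1}-s\|\bx-\by\|\geq \|\bx\|_{K_1}-\delta\geq-\delta$, and combining this with $\|\by\|_{K_1}\leq s_*\|\by\|$ gives $\|\by\|\leq (s\|\bx\|+\delta)/(s-s_*)$: the scaling factor is of order $\|\bx\|_2$, not $1$. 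A concrete counterexample to the clean pointwise estimate you want (and to the lemma as stated): take $K_1,K_2$ to be Euclidean balls of radii $1/a,1/b$, $C(\bx,\by)=\|\bx-\by\|_2$, and $s\geq\max(a,b)$. Then $\lambda_{K_i,s}(\bx)=\|\bx\|_{K_i}$ exactly, so $|f(K_1)-f(K_2)|=|a-b|\,\E_P[\|\bx\|_2]$, while $\| \|\cdot\|_{K_1}-\|\cdot\|_{K_2}\|_{\infty}=|a-b|$. No argument can produce the constant $1$ once $\E_P[\|\bx\|_2]>1$.

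You should know that the paper's own proof commits precisely the leap you were trying to avoid: its first display asserts $\big|\|\by\|_{K_1}-\|\by\|_{K_2}\big|\leq \| \|\cdot\|_{K_1}-\|\cdot\|_{K_2}\|_{\infty}$ for all $\by$, which holds only when $\|\by\|_2\leq 1$. So your diagnosis of where the difficulty sits is exactly right; the resolution, however, is not to eliminate the $\|\by\|_2$-factor but to keep it. The honest pointwise bound is $|\lambda_{K_1,s}(\bx)-\lambda_{K_2,s}(\bx)|\leq C(s,s_*)\,\| \|\cdot\|_{K_1}-\|\cdot\|_{K_2}\|_{\infty}\,\|\bx\|_2$, and integrating yields the lemma with the extra multiplicative factor $\E_P[\|\bx\|_2]$, which is finite by the standing assumption. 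That weaker constant is all that the downstream application (the continuity argument in Theorem \ref{thm:existence-dro-general-norm}) actually requires, but the inequality with constant $1$, as stated and as you set out to prove it, is not correct in general.
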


\begin{proof} [Proof of Lemma \ref{lem:cts_wrt_hausdorff}]  First we have
\begin{equation*}
\big| \big( \|\by\|_{K_1} - s C(\bx,\by) \big) - \big( \|\by\|_{K_2} - s C(\bx,\by) \big) \big| \leq \| \|\cdot\|_{K_1} - \|\cdot\|_{K_2} \|_{\infty} = : c.
\end{equation*}
By taking the maximum over $\by$, one also has 
\begin{equation*}
\big| \big( \sup_{\by} \|\by\|_{K_1} - s C(\bx,\by) \big) - \big( \sup_{\by} \|\by\|_{K_2} - s C(\bx,\by) \big) \big| \leq c.
\end{equation*}
Subsequently, by integrating over the measure $P$ one has
\begin{equation*}
\big| \big( \int \lambda_{K_1,s}(\bx) \mathrm{d}P(\bx) \big) - \big( \int \lambda_{K_2,s}(\bx) \mathrm{d}P(\bx) \big) \big| \leq c.
\end{equation*}
\end{proof}

Equipped with such results, we now turn to the proof of Theorem \ref{thm:existence-dro-general-norm}.

\begin{proof}[Proof of Theorem \ref{thm:existence-dro-general-norm}]

The proof works in three steps. First, we show that we can reduce the problem to considering $\lambda = \lambda_{K,s}$ and $s \geq s_*(K)$ as defined in Lemma \ref{lem:uniform-bound}. In particular, note that for any feasible $(K,s,\lambda)$, we have that $\lambda \geq \lambda_{K,s}$ pointwise, so we may replace $\lambda$ by $\lambda_{K,s}$ with increasing the objective. Moreover, if $s < s_*(K)$, then $\lambda_{K,s}$ by Proposition \ref{prop:lambda-phi-characterization}, so the objective is $+\infty$. Therefore, every minimizing sequence can be taken to satisfy $$\lambda = \lambda_{K,s} \quad \text{and} \quad s \geq s_*(K).$$

Next, fix $s > 0$ and consider the restricted problem over the set $$\mathcal{K}_s:= \{K\ \text{star body} : \mathrm{vol}(K) \leq 1, s \geq s_*(K)\}.$$ For any minimizing sequence $\{K_n\} \subset \mathcal{K}_s$, define $g_n(\by) := \|\by\|_{K_n}$.  Denote $\mathbb{S}_{\|\cdot\|}:=\{\bu \in \R^d: \|\bu\|=1\}.$ By Lemma \ref{lem:dro-lipschitz}, we have that $g_n$ is $s$-Lipschitz on $\mathbb{S}_{\|\cdot\|}$. By Lemma \ref{lem:uniform-bound}, the space of functions $\{g_n\}$ is uniformly bounded on $\mathbb{S}_{\|\cdot\|}$ as well. Hence $\{g_n\}$ is equicontinuous and bounded on the compact metric space $(\mathbb{S}_{\|\cdot\|},\|\cdot\|)$. By Arzel\`{a}-Ascoli, there is a uniformly convergent subsequence $g_{n_k} \rightarrow g_{\infty}$ on $\mathbb{S}_{\|\cdot\|}.$ Extend this sequence and $g_{\infty}$ to $\R^d$ by $1$-homogeneity. Note that the extension of $g_{\infty}$ yields a gauge $\|\cdot\|_{K_{\infty}}$ of a star body $K_{\infty}$; moreover, $\mathrm{vol}(K_{\infty})\leq 1$ by lower semicontinuity of the volume functional. Finally, continuity of the objective with respect to $\|\cdot\|_K$ (via Lemma \ref{lem:cts_wrt_hausdorff}) and the choice $\lambda = \lambda_{K,s}$ imply that $K_{\infty}$ attains the minimum for this fixed $s$.

Finally, we minimize over $s$. In particular, let $\phi(s):=\min_{K \in \mathcal{K}_s} s \epsilon + \int \lambda_{K,s}\mathrm{d}P$. Because $\lambda_{K,s} \leq s\|\cdot\|$, we have that $\phi(s) \leq s\epsilon + s\mathbb{E}_P[\|\bx\|]$ (which is finite by assumption). On the other hand, $\phi(s) \geq s\epsilon$. Thus, any minimizing sequence $\{s_n\}$ is bounded (otherwise, the term $s\epsilon$ would drive the objective to $+\infty$). Extract a convergent subsequence $s_{n_k} \rightarrow s^* \geq 0$. For each $k$, pick a minimizer $K_{n_k} \in \mathcal{K}_{s_{n_k}}$. By the same compactness argument as above, along a further subsequence $K_{n_k} \rightarrow K^*$. Passing to the limit in the constraints yields $s^* \geq s_*(K^*)$; passing to the limit in the objective using Lemma \ref{lem:cts_wrt_hausdorff} and $\lambda = \lambda_{K,s}$ gives optimality of $(K^*,s^*,\lambda_{K^*,s^*})$.
\end{proof}

\section{Enforcing Convexity of the Optimal Regularizer} \label{sec:convex}

In this section we consider the problem of describing the optimal {\em convex} regularizer for a data source.  We formulate this problem as the following shape regression task.
\begin{equation} \label{eq:optimalconvexregularizer}
    \underset{K \in \mathcal{S}^{d}}{\mathrm{argmin}} ~~ \E_P[\|\bx\|_K] \qquad \mathrm{s.t.} \qquad \mathrm{vol} (K) = 1, K ~ \mathrm{convex}. 
\end{equation}
The basic questions we wish to investigate in this section are: (i) what is the underlying geometry of the optimal solutions to \eqref{eq:optimalconvexregularizer}, and (ii) what are the distributional robustness properties concerning the solutions to \eqref{eq:optimalconvexregularizer}.

Prior work in optimal regularization has characterized in some instances what the best convex regularizer would be \cite{leong2025optimal,traonmilin2024theory}. For example, the following Corollary of Theorem \ref{thm:optstarbodyreg} gave a condition on when the optimal star body regularizer is in fact convex. \begin{corollary}[Corollary 1 in \cite{leong2025optimal}]
    Let $P$ be a probability measure as in Theorem \ref{thm:optstarbodyreg}. Then if the function $\bx \mapsto 1/\rho_P(\bx)$ is a convex function on $\R^d$, then the optimal $\hat{K}$ is in fact a convex body and hence the optimal regularizer $\|\cdot\|_{\hat{K}}$ is convex.
\end{corollary}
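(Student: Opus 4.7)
The plan is to reduce the claim to a standard fact about gauge functions of star bodies, using Theorem \ref{thm:optstarbodyreg} to make the reduction transparent. Namely, since $\hat{K} = \mathrm{vol}(L_P)^{-1/d}\,L_P$ is merely a positive rescaling of $L_P$, and positive scalings preserve convexity, it suffices to show that $L_P$ itself is convex.

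Now $L_P$ is, by construction, the star body whose radial function is $\rho_P$. The first step is to identify the hypothesized function $\bx \mapsto 1/\rho_P(\bx)$ with the gauge function $\|\cdot\|_{L_P}$. Since $\rho_P$ is originally defined only on $\sd$, we interpret $1/\rho_P$ on $\R^d$ as its positively $1$-homogeneous extension, which by the standard correspondence between radial and gauge functions in the preliminaries is precisely $\|\bx\|_{L_P} = \|\bx\|_2 / \rho_P(\bx/\|\bx\|_2)$ for $\bx \neq 0$. This is the natural (and only 1-homogeneous) extension compatible with the assertion that $1/\rho_P$ is a function on $\R^d$.

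The second step is to invoke the fact that a star body $K$ is convex if and only if its gauge function $\|\cdot\|_K$ is convex on $\R^d$. In one direction this is immediate from the triangle-inequality type property; in our direction, convexity of $\|\cdot\|_K$ (together with positive 1-homogeneity, which we automatically have) makes $\|\cdot\|_K$ sublinear, so the unit sublevel set $L_P = \{\bx : \|\bx\|_{L_P} \leq 1\}$ is convex as a sublevel set of a convex function. Applying this to $K = L_P$ using the hypothesis that $1/\rho_P$ is convex yields convexity of $L_P$, and hence of $\hat{K}$.

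There is essentially no obstacle; the only point requiring any care is the bookkeeping in identifying $1/\rho_P$ (as a function on $\R^d$) with the gauge function $\|\cdot\|_{L_P}$. Once that identification is made, the corollary is a direct consequence of the fact that the Minkowski functional of a star body is convex precisely when the body is convex, combined with the trivial observation that scaling by a positive constant preserves convexity.
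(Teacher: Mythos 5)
The paper does not reproduce a proof of this corollary (it is cited verbatim from prior work), so there is no in-paper argument to compare against. Your proof is correct and is the natural one: reducing to $L_P$ convex via positive-scaling invariance, identifying the positively $1$-homogeneous extension of $1/\rho_P$ with the gauge $\|\cdot\|_{L_P}$, and then invoking the equivalence between convexity of a star body and convexity of its gauge. The one step worth stating slightly more explicitly is that $L_P$ is in fact a \emph{convex body} (not merely a convex star set): this follows because $\rho_P$ is assumed positive and continuous in Theorem \ref{thm:optstarbodyreg}, so $L_P$ is compact with $\mathbf{0}$ in its interior, and convexity is then exactly what you supply; rescaling by $\mathrm{vol}(L_P)^{-1/d}$ preserves all of this, so $\hat{K}$ is a convex body and $\|\cdot\|_{\hat{K}}$ is a convex gauge.
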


It is unfortunately challenging to provide closed form expressions of the optimal solutions to \eqref{eq:optimalconvexregularizer} in a similar fashion as we did for star bodies.  The reason is because convexity introduces dependencies between the gauge function evaluations across neighboring points; in contrast, for star bodies, the gauge function evaluations between pairs of points were decoupled.  Our next best option is to seek finite-dimensional optimization problems that solve \eqref{eq:optimalconvexregularizer} approximately.  Wherever possible, we wish to pose these optimization instances as convex programs.  In what follows, we explain how this is possible in settings where data lies $\mathbb{R}^2$, and we describe how these ideas may be extended to higher dimensions.

\subsection{Parameterizing Convex Bodies}

A central challenge is to obtain a tractable parametrization of convex bodies. Two standard dual perspectives are available: representing $K$ as the convex hull of its extreme points, or as intersection of suporting half-spaces. In the following, we adopt the former perspective.  More concretely, suppose we parameterize $K$ as the convex hull of vectors of the form
\begin{equation*}
K = \mathrm{conv} \left( \{ \bu_i / t_i \}_{i=1}^{n} \right).
\end{equation*}
Here, $\bu_i \in \sd$ are unit vectors, while $t_i$ are positive scalars. The vectors $\bu_i$ are input variables specified beforehand and remained {\em fixed} throughout.  The scalar variables $t_i$ are the decision variables in our formulation.  

In essence, we want the variables $t_i$ to model the gauge function evaluation of $K$ in the direction $\bu_i$.  As such, it is necessary to impose conditions on the variables $t_i$ so that this conditions is indeed true.  In particular, one has
$$
\| \bu_i \|_{K} = \inf \{ t > 0 : \bu_i \in t \cdot K \} = \inf \{ t > 0 : \bu_i \in t \cdot \mathrm{conv} \left( \{ \bu_i / t_i \}_{i=1}^{n} \right) \} \leq t_i.
$$
The inequality follows from the fact that a blow-up of $\bu_i / t_i$ by a factor of $t_i$ equals $\bu_i$, and hence $\| \bu_i \|_{K}$ must be smaller.  It is also immediate to see that equality in the above holds if and only if $\bu_i / t_i$ lies on the boundary of $K$.  As such, our next objective is to describe conditions on $t_i$ that ensures all the vectors $\{\bu_i/t_i\}$ are extremal in $K$.

\subsection{Deriving Convexity Constraints in $\mathbb{R}^2$}  

Let's start simple by supposing data resides in $\mathbb{R}^2$.  Let $\{\bu_i\}_{i=1}^{n} \subset \mathbb{S}^{1}$ be a collection of direction vectors.  For concreteness, we suppose that these angles are denoted by $\theta_i$ in increasing order; that is
$$
\bu_i = (\cos(\theta_i), \sin (\theta_i) )^T,
$$
where the angles are chosen in order so that $ 0 \leq \theta_1 < \theta_2 < \ldots < \theta_n < 2\pi$.  We impose the condition that $\theta_{i} - \theta_{i-2} < \pi$.  This has the geometric interpretation that two consecutive direction vectors should not be too far apart.

Consider the following points in $\mathbb{R}^2$
\begin{equation*}
\bx_{-1} = \frac{1}{t_{-1}} \left( \begin{array}{c} \alpha_{-1}\\ \beta_{-1} \end{array} \right),
\bx_{0} = \frac{1}{t_{0}} \left( \begin{array}{c} \alpha_{0}\\ \beta_{0} \end{array} \right),
\bx_{1} = \frac{1}{t_{1}} \left( \begin{array}{c} \alpha_{1}\\ \beta_{1} \end{array} \right).
\end{equation*}
Here, we denote $\alpha_i = \cos(\theta_i)$ and $\beta_i = \sin(\theta_i)$.  Our next step is to derive conditions on $t_{-1},t_0,t_1$ that ensure $\bx_{0}$ point is extremal.  
\if0
It is not difficult to see this is equivalent to requiring the point $\bx_{0}$ and the origin $\bzero$ lie on opposite sides of the line through $\bx_{-1}$ and $\bx_{1}$.  

First, a generic convex combination of the points $\bx_{-1}$ and $\bx_{1}$
\begin{equation*}
\theta \bx_{-1} + (1-\theta) \bx_{1} = \frac{\theta}{t_{-1}} \left( \begin{array}{c} \alpha_{-1}\\ \beta_{-1} \end{array} \right) + \frac{1 - \theta}{t_{1}} \left( \begin{array}{c} \alpha_{1}\\ \beta_{1} \end{array} \right).
\end{equation*}
We work out the value of $\theta$ such that it is parallel to $\bx_{0}$.  This value of $\theta$ satisfies
\begin{equation*}
\frac{\alpha_{0}}{\beta_{0}} = 
\frac{ \theta \alpha_{-1}/t_{-1} + \alpha_1/t_1 - \theta \alpha_1/t_1 }{  \theta \beta_{-1}/t_{-1} + \beta_1/t_1 - \theta \beta_1/t_1 } = 
\frac{ \theta (\alpha_{-1}/t_{-1}-\alpha_1/t_1) + \alpha_1/t_1 }{  \theta (\beta_{-1}/t_{-1} - \beta_1/t_1 ) + \beta_1/t_1 }.
\end{equation*}
By solving for $\theta$, we have
\begin{equation*}
\theta = \frac{ (\alpha_{0}\beta_1 - \beta_{0}\alpha_1)/t_1}
{ (\beta_{0}\alpha_{-1}- \alpha_{0}\beta_{-1})/t_{-1} + (\alpha_{0}\beta_1-\beta_{0}\alpha_1)/t_1 }.
\end{equation*}

We substitute this choice of $\theta$ into the expression.  We compute the first coordinate and set this to be smaller than $\alpha_0 / t_0$ in order to derive conditions.  This gives
\begin{equation*}
\theta \alpha_{-1} / t_{-1} + (1 - \theta) \alpha_1 / t_{1} \\
= \frac{ D_{0,1} \alpha_{-1} + D_{-1,0} \alpha_1}
{ D_{-1,0} t_{1} + D_{0,1} t_{-1} } \\
\leq \alpha_0 / t_0
\end{equation*}
where we define
\begin{equation*}
D_{i,j} = \alpha_i \beta_{j} - \alpha_{j} \beta_i.
\end{equation*}
The inequality can be further simplified to
\begin{equation} \label{eq:convexityconstraint_R2}
( \alpha_{-1} \beta_1 - \alpha_1 \beta_{-1} ) t_0 \leq ( \beta_0 \alpha_{-1} - \beta_{-1} \alpha_0 ) t_1 + ( \alpha_0 \beta_1 - \alpha_1 \beta_0 ) t_{-1}.
\end{equation}

The appearance of quantities that resemble the determinant of a matrix suggests that an alternative proof via geometric arguments should exist.  In fact this is true.  
\fi
Consider the triangles $\triangle_{-1,0} = \mathrm{conv}(\{ \mathbf{0}, \bx_{-1}, \bx_{0} \})$, $\triangle_{0,1} = \mathrm{conv}(\{ \mathbf{0}, \bx_{0}, \bx_{1} \})$, $\triangle_{-1,1} = \mathrm{conv}(\{ \mathbf{0}, \bx_{-1}, \bx_{1} \})$.  The condition that $\bx_0$ is extremal is equivalent to requiring that the area of triangle $\triangle_{-1,1}$ is smaller than the sum of the areas of $\triangle_{-1,0}$ and $\triangle_{0,1}$.  This yields
\begin{equation*}
( \alpha_{-1} \beta_1 - \alpha_1 \beta_{-1} ) / (t_{-1} t_0) + ( \beta_0 \alpha_{-1} - \beta_{-1} \alpha_0 ) / (t_{0} t_{1}) \geq ( \alpha_0 \beta_1 - \alpha_1 \beta_0 ) / (t_{-1} t_{1}).
\end{equation*}
Suppose we denote
\begin{equation*}
D_{i,j} = \alpha_i \beta_{j} - \alpha_{j} \beta_i.
\end{equation*}
This yields the inequality 
\begin{equation} \label{eq:convexityconstraint_R2}
D_{-1,0} t_0 \leq D_{0,1} t_1 + D_{-1,1} t_{-1}.
\end{equation}

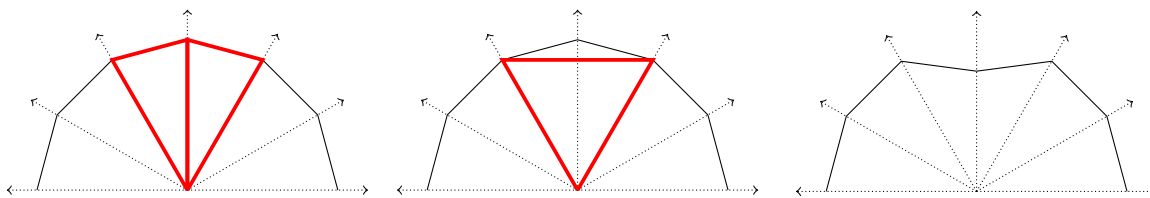
\begin{figure}[h]
\centering
\begin{subfigure}[t]{0.3\textwidth}
    \centering
\begin{tikzpicture}


\draw[densely dotted,->] (0,0) -- (1.2*2,1.2*0);
\draw[densely dotted,->] (0,0) -- (1.2*1.732,1.2*1);
\draw[densely dotted,->] (0,0) -- (1.2*1,1.2*1.732);
\draw[densely dotted,->] (0,0) -- (1.2*0,1.2*2);
\draw[densely dotted,->] (0,0) -- (-1.2*1,1.2*1.732);
\draw[densely dotted,->] (0,0) -- (-1.2*1.732,1.2*1);
\draw[densely dotted,->] (0,0) -- (-1.2*2,1.2*0);

\draw (2,0) -- (1.732,1) -- (1,1.732) -- (0,2) -- (-1,1.732) -- (-1.732,1) -- (-2,0);

\draw [line width=0.5mm, red ] (0,0) -- (-1,1.732) -- (0,2) -- (0,0);
\draw [line width=0.5mm, red ] (0,0) -- (1,1.732) -- (0,2) -- (0,0);

\end{tikzpicture}

\end{subfigure}%
~
\begin{subfigure}[t]{0.3\textwidth}
    \centering
\begin{tikzpicture}



\draw[densely dotted,->] (0,0) -- (1.2*2,1.2*0);
\draw[densely dotted,->] (0,0) -- (1.2*1.732,1.2*1);
\draw[densely dotted,->] (0,0) -- (1.2*1,1.2*1.732);
\draw[densely dotted,->] (0,0) -- (1.2*0,1.2*2);
\draw[densely dotted,->] (0,0) -- (-1.2*1,1.2*1.732);
\draw[densely dotted,->] (0,0) -- (-1.2*1.732,1.2*1);
\draw[densely dotted,->] (0,0) -- (-1.2*2,1.2*0);

\draw (2,0) -- (1.732,1) -- (1,1.732) -- (0,2) -- (-1,1.732) -- (-1.732,1) -- (-2,0);

\draw [line width=0.5mm, red ] (0,0) -- (-1,1.732) -- (1,1.732)  -- (0,0);

\end{tikzpicture}
\end{subfigure}
~
\begin{subfigure}[t]{0.3\textwidth}
    \centering
\begin{tikzpicture}



\draw[densely dotted,->] (0,0) -- (1.2*2,1.2*0);
\draw[densely dotted,->] (0,0) -- (1.2*1.732,1.2*1);
\draw[densely dotted,->] (0,0) -- (1.2*1,1.2*1.732);
\draw[densely dotted,->] (0,0) -- (1.2*0,1.2*2);
\draw[densely dotted,->] (0,0) -- (-1.2*1,1.2*1.732);
\draw[densely dotted,->] (0,0) -- (-1.2*1.732,1.2*1);
\draw[densely dotted,->] (0,0) -- (-1.2*2,1.2*0);

\draw (2,0) -- (1.732,1) -- (1,1.732) -- (0,1.6) -- (-1,1.732) -- (-1.732,1) -- (-2,0);

\end{tikzpicture}
\end{subfigure}

\caption{Illustration of how convexity for planar sets is enforced: The sum of the areas of the sectors in the red triangles in left sub-figure should exceed the area of the sector in the middle sub-figure.  Right sub-figure: When the inequality is violated, the resulting set is no longer convex.}
\label{fig:2dconvexity}
\end{figure}

\begin{figure}[h]
\centering
\begin{tikzpicture}
\draw[densely dotted,->] (0,0) -- (1.2*2,1.2*0);
\draw[densely dotted,->] (0,0) -- (1.2*1.732,1.2*1);
\draw[densely dotted,->] (0,0) -- (1.2*1,1.2*1.732);
\draw[densely dotted,->] (0,0) -- (1.2*0,1.2*2);
\draw[densely dotted,->] (0,0) -- (-1.2*1,1.2*1.732);
\draw[densely dotted,->] (0,0) -- (-1.2*1.732,1.2*1);
\draw[densely dotted,->] (0,0) -- (-1.2*2,1.2*0);
\draw[densely dotted,->] (0,0) -- (-1.2*1.732,-1.2*1);
\draw[densely dotted,->] (0,0) -- (-1.2*1,-1.2*1.732);
\draw[densely dotted,->] (0,0) -- (-1.2*0,-1.2*2);
\draw[densely dotted,->] (0,0) -- (1.2*1,-1.2*1.732);
\draw[densely dotted,->] (0,0) -- (1.2*1.732,-1.2*1);
\draw[densely dotted,->] (0,0) -- (1.2*2,-1.2*0);

\draw (0.9*2,0.9*0) -- (1.732,1) -- (1.1*1,1.1*1.732) -- (1.1*0,1.1*2) -- (-1,1.732) -- (-0.9*1.732,0.9*1) -- (-2,0) -- (-1.732,-1) -- (-1.1*1,-1.1*1.732) -- (0,-2) -- (1,-1.732) -- (1.732,-1) -- (0.9*2,0.9*0); 

\filldraw[color=red!60,opacity=0.5] (0,0) -- (1.1*0,1.1*2) -- (1.1*1,1.1*1.732) -- (0,0) -- cycle;

\end{tikzpicture}
\caption{Convex planar set expressed via a union of triangle sectors.  The volume of this set is expressed as the sum of the areas of each sector.}
\label{fig:2dsumofareas}
\end{figure}
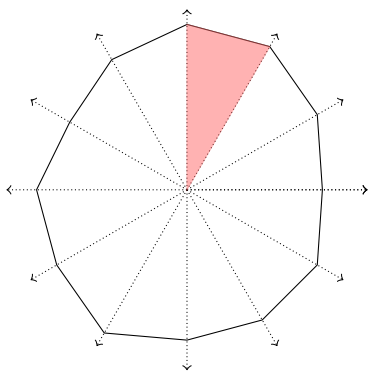

\subsection{Description of Convex Program} 

Next, we assume that the probability distribution $P$ of interest is supported only on the points $\{ \bu_i \} \subset \R^2$.  In what follows, we denote
$$
a_i := \mathbb{P} [\bx = \bu_i].
$$
We also let $\theta_{i,i+1}$ denote the angle between the directions $\bu_i$ and $\bu_{i+1}$. 

The optimization instance \eqref{eq:optimalconvexregularizer} can be expressed via the following convex program
\begin{equation} \label{eq:convexprogram_r2}
\begin{aligned}
\min ~~ & \sum a_i t_i \\
\mathrm{s.t.} ~~ & t_i D_{i-1,i+1} \leq t_{i-1} D_{i,i+1} + t_1 D_{i-1,i} \\
& \sum_{i=1}^{n} (1/2) \sin \theta_{i,i+1} /( t_i t_{i+1}) \leq 1 .
\end{aligned}
\end{equation}
We explain how one arrives at \eqref{eq:convexprogram_r2}.

First, as discussed earlier, the inequality $t_i D_{i-1,i+1} \leq t_{i-1} D_{i,i+1} + t_1 D_{i-1,i}$ ensures that the points $\bu_i/t_i$ are extreme points of $K$.  By doing so, we ensure that the gauge function evaluation with respect to $K$ in the direction $\bu_i$ is exactly $t_i$.  

Second, the objective can be expressed as follows
$$
\mathbb{E}_{P} [ \| \bx \|_K ] = \sum_{i=1}^{n} \mathbb{P} [\bx=\bu_i] \| \bu_i \|_K = \sum_{i=1}^{n} a_i t_i \| \bu_i \|_2 = \sum a_i t_i.
$$
Here, the second equality relies on the fact that the gauge function of $K$ in the direction $\bu_i$ is exactly $t_i$.  

Third, the inequality $\sum_{i=1}^{n} (1/2) \sin \theta_{i,i+1} /( t_i t_{i+1}) \leq 1$ models the constraint that the volume of $K$ is at most one.  As a reminder, the area of the sector spanned by $\bu_i/t_i$ and $\bu_{i+1}/t_{i+1}$ is $(1/2) \sin \theta_{i,i+1} /( t_i t_{i+1})$, and the area of $K$ is the sum of the area of each sector -- see Figure \ref{fig:2dsumofareas} for an illustration.

For concreteness, suppose we take $\bu_k = (\cos (2 \pi k/n), \sin ( 2\pi k/n))^T$.  Then \eqref{eq:convexprogram_r2} simplifies to
\begin{equation} \label{eq:convexprogram_r3}
\begin{aligned}
\min ~~ & \sum a_i t_i \\
\mathrm{s.t.} ~~ & t_i \sin(4\pi/n) \leq (t_{i-1} + t_{i+1}) \sin(2\pi/n) \\
& \sum_{i=1}^{n}1 /( t_i t_{i+1}) \leq 2 / \sin(2\pi/n).
\end{aligned}
\end{equation}

We briefly justify why \eqref{eq:convexprogram_r2} specifies a convex program.  The objective and the first set of constraints are linear.  The Hessian of the function $f(x_1,x_2) = 1/(x_1 x_2)$ is
\begin{equation*}
\nabla^2 f = \frac{1}{x_1 x_2 } \left( \begin{array}{cc}
2/x_1^2 & 1/(x_1 x_2) \\ 1/(x_1 x_2) & 2/x_2^2
\end{array} \right).
\end{equation*}
The determinant is $3/(x_1^2 x_2^2)$ which is positive over $x_1>0,x_2>0$, and hence $f$ is convex over the non-negative orthant.

\subsection{Numerical Illustrations}

\begin{figure}[h] \label{fig:dro_L0L1}
\centering
\includegraphics[width=0.24\textwidth]{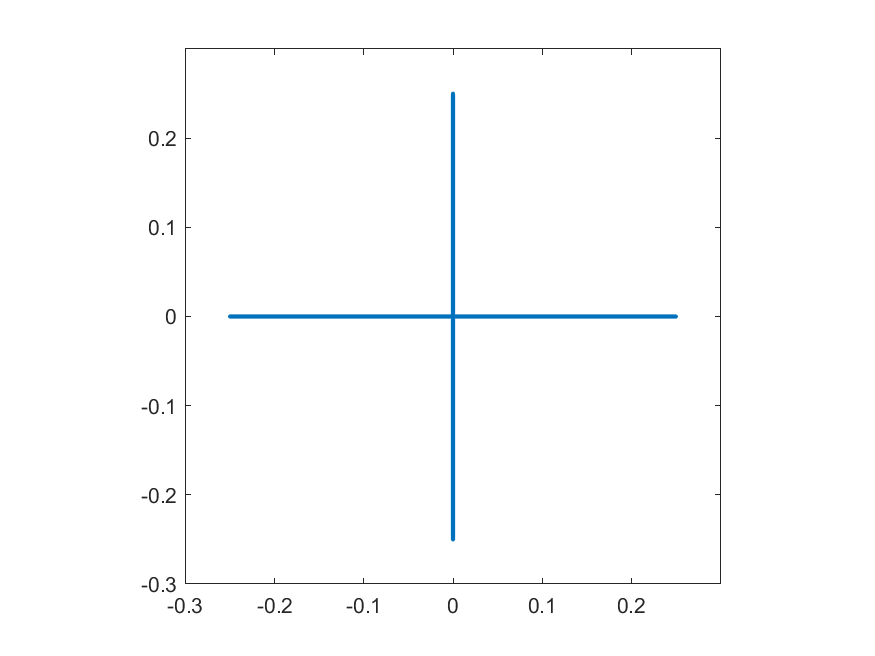}
\includegraphics[width=0.24\textwidth]{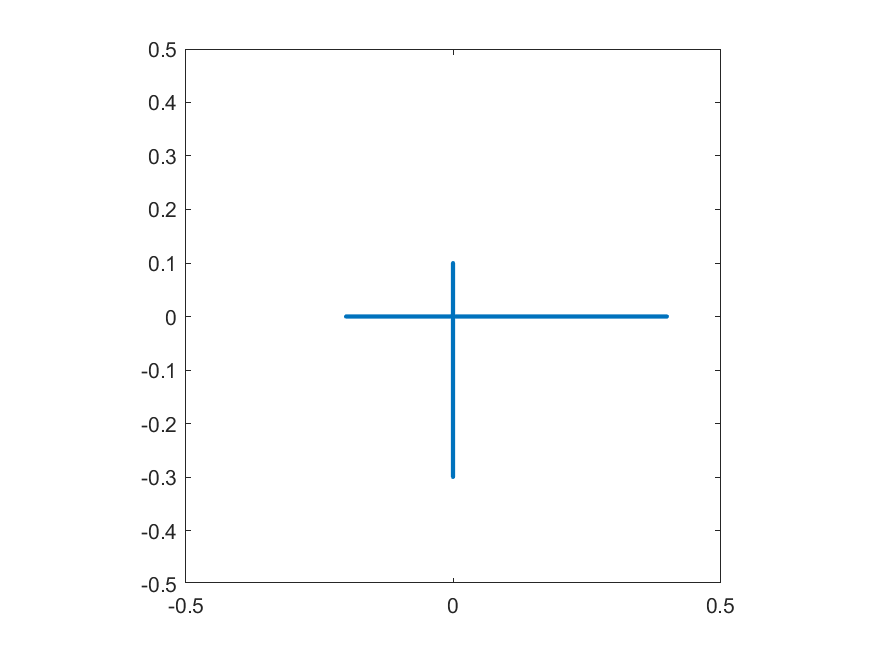}
\includegraphics[width=0.24\textwidth]{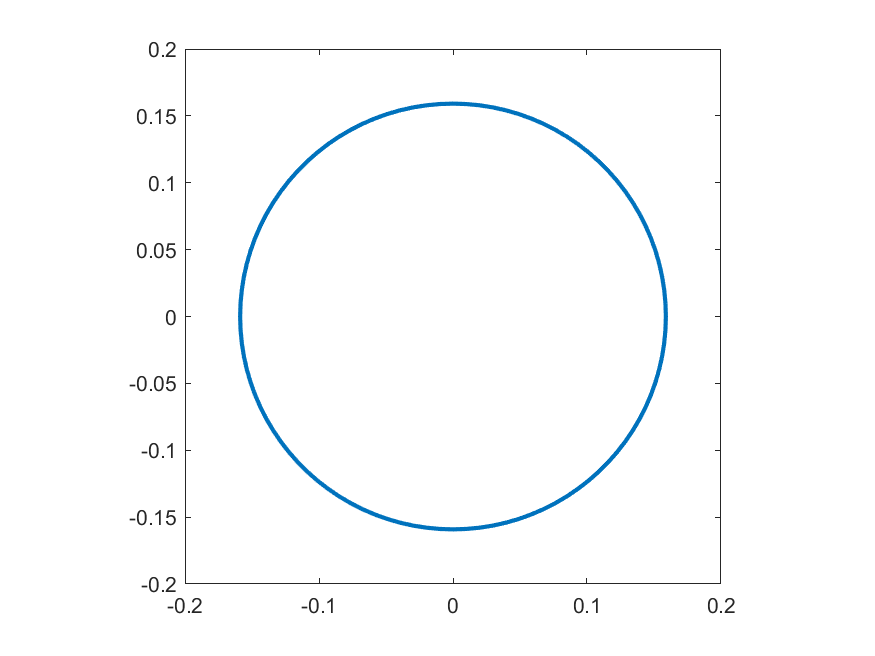}
\includegraphics[width=0.24\textwidth]{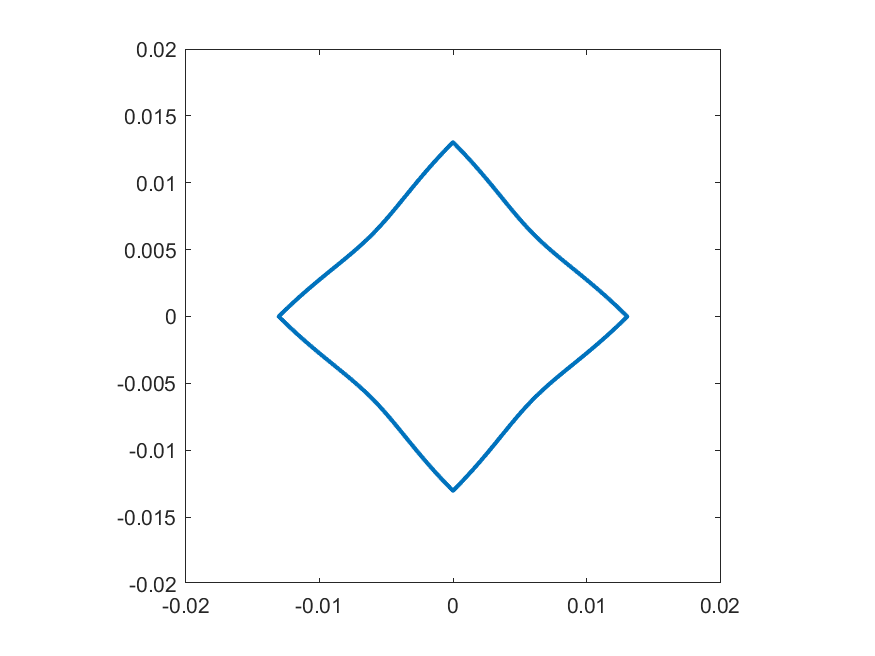}
\includegraphics[width=0.24\textwidth]{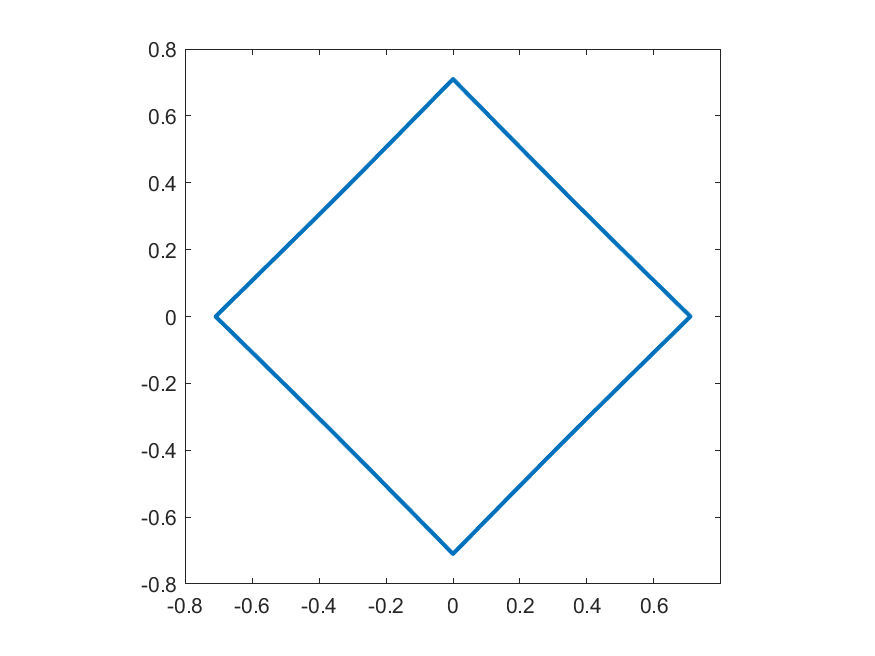}
\includegraphics[width=0.24\textwidth]{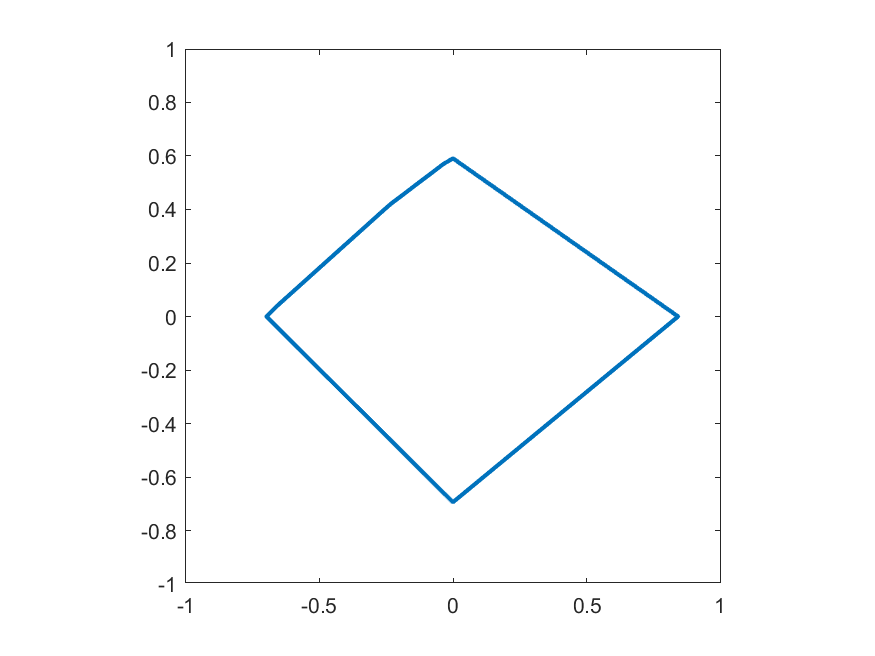}
\includegraphics[width=0.24\textwidth]{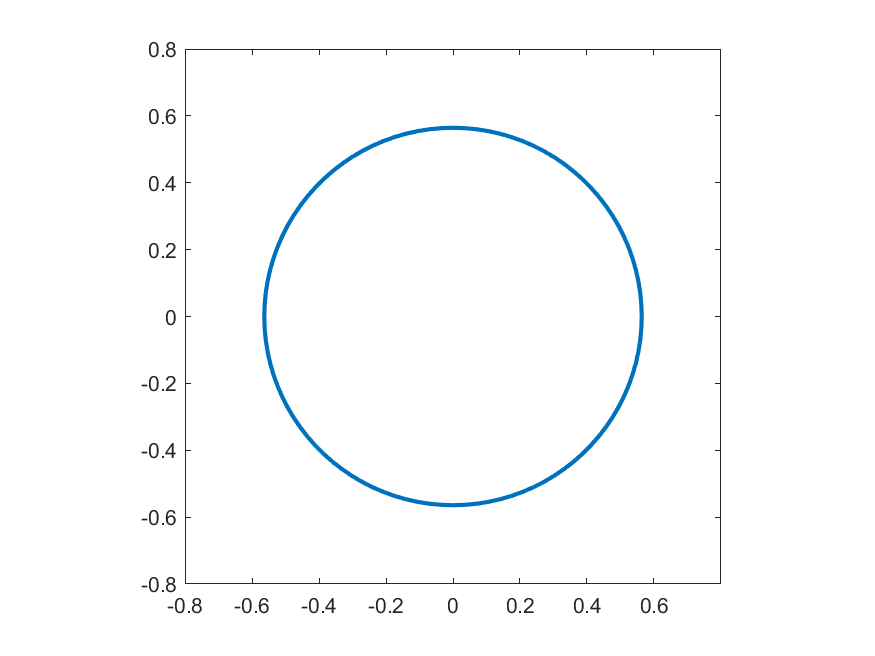}
\includegraphics[width=0.24\textwidth]{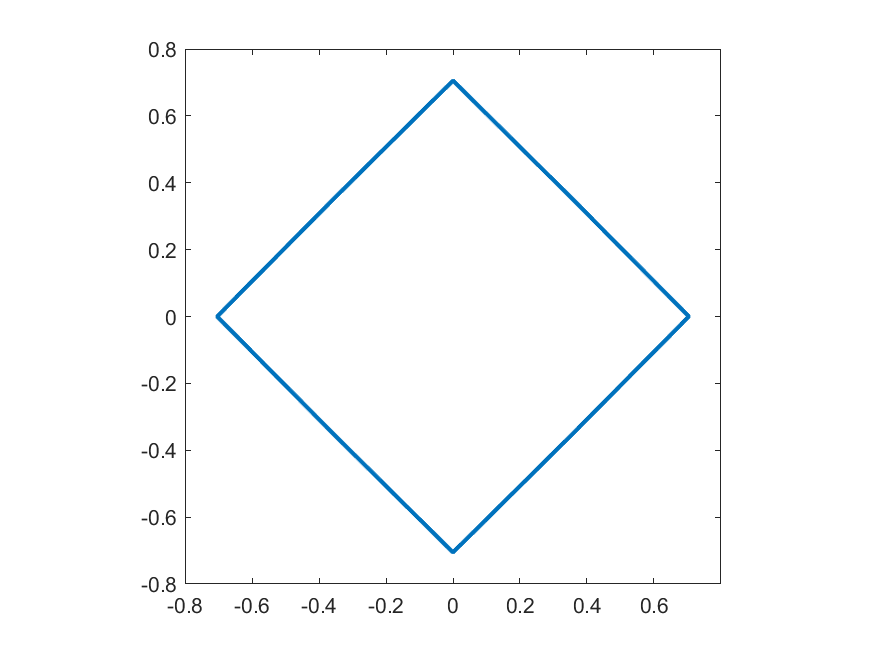}
\caption{Optimal Convex Regularizers.  The underlying data distribution is given in the top row and the (level set of the) corresponding optimal convex regularizer is specified in the bottom row.}
\end{figure}

We consider computing the optimal convex regularizer for a number of different distributions using the convex program we presented in the earlier section. These examples are computing using \eqref{eq:convexprogram_r2}.

\paragraph{Example 1: Uniform basis vectors.} In the first example we consider data distributed uniformly on the standard basis vectors $\{ (0,1), (-1,0), (0,-1), (1,0) \}$.  As we expect, the optimal regularizer in this case is indeed the L1-ball.  Note that the optimal non-convex regularizer does not exist without suitable regularity assumptions put in place.  This is because the distribution is atomic and the optimal star "body" would have zero volume.

\paragraph{Example 2: Weighted basis vectors.} In the second example, we consider data distributed on the same set of vectors, but with distribution $0.1,0.2,0.3,0.4$ respectively.  In this case, the optimal regularizer is a different polytope, whose gauge function evaluations at the standard basis vectors are weighted differently as a response to the data observed.

\paragraph{Example 3: Uniform on the circle.} In the third example, data is distributed uniformly over the unit-circle.  As we expect, the optimal regularizer is the L2-norm.

\paragraph{Example 4: Laplace distribution.} In the fourth example, the distribution is a Laplace-type distribution whose distribution is $\exp(- \| \bx \|_1)$, and subsequently normalized to be a distribution.  The optimal regularizer is the L1-norm, and this confirms an observation made in \cite{leong2025optimal} regarding distributions that are functions of level sets of the regularizer.

\begin{figure}[h] 
\centering
\includegraphics[width=0.24\textwidth]{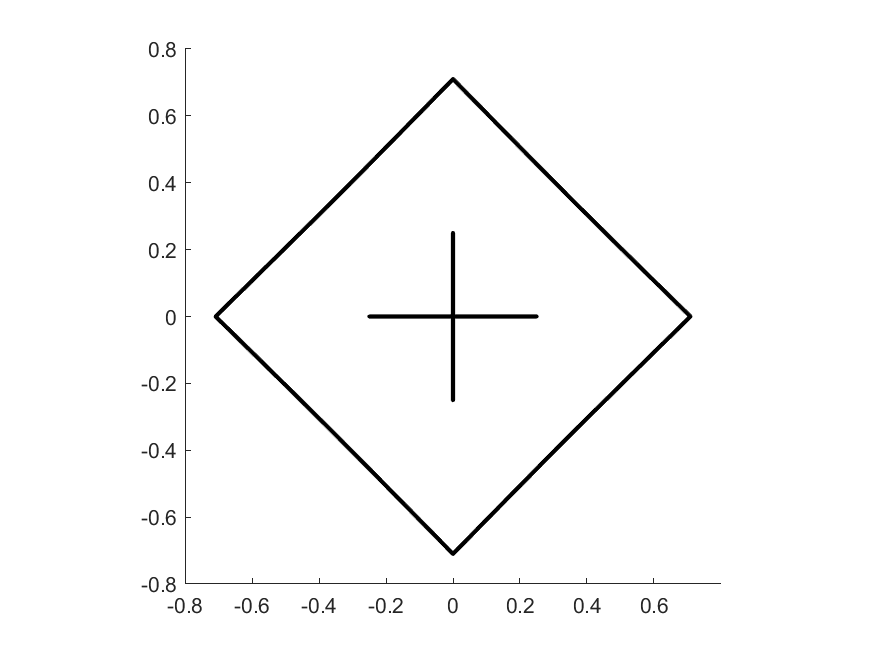}
\includegraphics[width=0.24\textwidth]{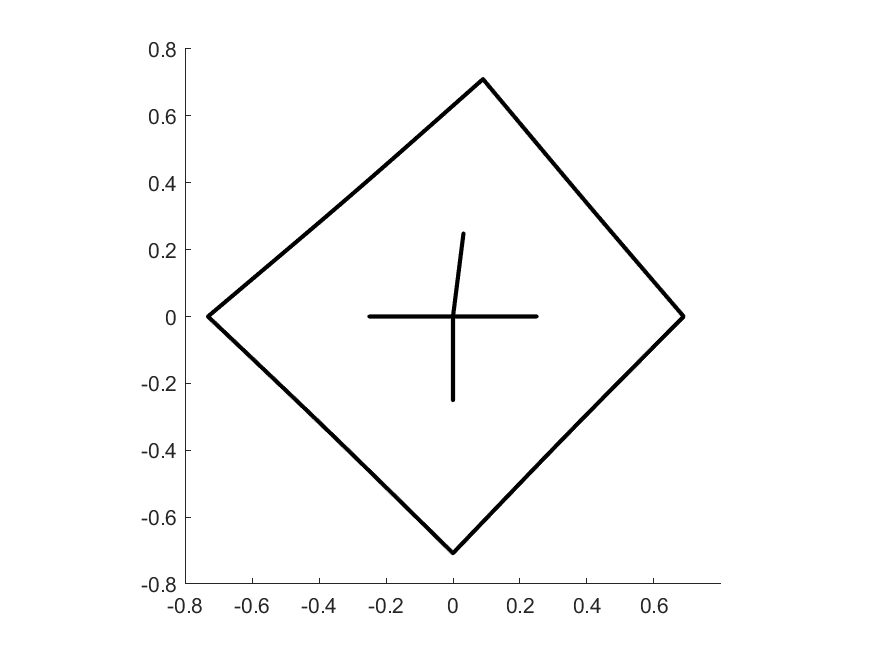}
\caption{Small changes in the underlying distribution lead to small changes in the optimal convex regularizer.  The underlying data distribution are atomic measures.  The two distributions differ by a small shift in the support set.}
\label{fig:dro_cvx}
\end{figure}

{\bf Distributional shifts.}  Our next question is, how does the optimal convex regularizer change with respect to changes in the distribution?  We do not have a complete understanding of this question, but believe the answer is that optimal convex regularizers tend to be well-behaved with respect to small changes in the underlying distribution. To illustrate this point, we consider the following stylistic set-up.  In the first example, the underlying distribution is uniform over $\{ (0,1), (-1,0), (0,-1), (1,0) \}$.  In the second example, the underlying distribution is uniform over $\{ (\sin(2\pi/25),\cos(2\pi/25)), (-1,0), (0,-1), (1,0) \}$.  In particular, there is a small shift in the first data-point.  In Figure \ref{fig:dro_cvx} we show the corresponding optimal convex regularizers -- these are superimposed over the underlying distributions.

\subsubsection{Robustness induced by convexity}

Based on the previous example, we notice that optimal convex regularizers for distributions $P_1$ and $P_2$ that are close to one another are also similar in the sense that the gauge function evaluations differ by a small amount, uniformly across all unit-norm inputs, i.e., 
\begin{equation*}
\left\| \|\cdot\|_{\hat{K}(P_1)} - \|\cdot\|_{\hat{K}(P_2)} \right\|_{\infty}\ \text{is small}.
\end{equation*}
This is in sharp contrast with optimal non-convex regularizers where small shifts can lead to large changes in the gauge function evaluations, specifically at the locations where the ``support" changes.

While the example we describe is stylistic, it offers some basic intuition. Specifically, the convexity shape constraint ensures that the resulting regularizer is well-behaved for any input, leading to a natural type of robustness.  As a consequence, even with small changes in the underlying distribution, the gauge function of the corresponding optimal regularizer changes smoothly. To provide theoretical support for this, we give the following result which shows that the optimal convex gauge should also behave well for nearby distributions. This explicitly depends on the distance between the two distributions along with their Lipschitz constants.
\begin{proposition} \label{prop:convex-robustness}
 For any two probability measures $P,Q$ and minimizers $\hat{K}_P$ and $\hat{K}_Q$ to \eqref{eq:optimalconvexregularizer}, we have that \begin{align*}
    \mathbb{E}_Q[\|\bx\|_{\hat{K}_P}] \leq \inf_{K\ \mathrm{convex},\  \mathrm{vol}(K)=1}\mathbb{E}_Q[\|\bx\|_K] + \left(\mathrm{Lip}(\hat{K}_P)+ \mathrm{Lip}(\hat{K}_Q)\right)W_1(P,Q).
    \end{align*}
\end{proposition}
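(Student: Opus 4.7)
The plan is to chain a one-sided optimality inequality for $\hat{K}_P$ under $P$ together with two Kantorovich--Rubinstein transfer bounds that move between $P$- and $Q$-expectations, each paying a cost of the Lipschitz constant of the relevant gauge times $W_1(P,Q)$. First, since $\hat{K}_Q$ realizes the infimum on the right-hand side of the target inequality, it suffices to prove
\begin{equation*}
\mathbb{E}_Q[\|\bx\|_{\hat{K}_P}] - \mathbb{E}_Q[\|\bx\|_{\hat{K}_Q}] \leq \bigl(\mathrm{Lip}(\hat{K}_P) + \mathrm{Lip}(\hat{K}_Q)\bigr)\, W_1(P,Q).
\end{equation*}

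Next, I would verify that the two gauges are Lipschitz with finite constants so that Kantorovich--Rubinstein duality applies. Since $\hat{K}_P$ and $\hat{K}_Q$ are convex bodies, their kernels equal themselves and contain a Euclidean ball $r B^d$ for some $r>0$ (this is the $r_{\mathrm{ker}}>0$ condition discussed in Section 3.3.3), which by the cited Proposition 2 of \cite{leong2025optimal} gives $\mathrm{Lip}(\hat{K}_P),\mathrm{Lip}(\hat{K}_Q) < \infty$. Kantorovich--Rubinstein duality then yields, for $L$-Lipschitz $f$, the transfer bound $|\mathbb{E}_P[f] - \mathbb{E}_Q[f]| \leq L\, W_1(P,Q)$.

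The main step is to chain the bounds. Applied with $f = \|\cdot\|_{\hat{K}_P}$, Kantorovich--Rubinstein gives $\mathbb{E}_Q[\|\bx\|_{\hat{K}_P}] \leq \mathbb{E}_P[\|\bx\|_{\hat{K}_P}] + \mathrm{Lip}(\hat{K}_P) W_1(P,Q)$. By optimality of $\hat{K}_P$ among convex unit-volume bodies for the distribution $P$, we have $\mathbb{E}_P[\|\bx\|_{\hat{K}_P}] \leq \mathbb{E}_P[\|\bx\|_{\hat{K}_Q}]$ since $\hat{K}_Q$ is itself a feasible competitor. Finally, one more Kantorovich--Rubinstein step in the opposite direction with $f = \|\cdot\|_{\hat{K}_Q}$ gives $\mathbb{E}_P[\|\bx\|_{\hat{K}_Q}] \leq \mathbb{E}_Q[\|\bx\|_{\hat{K}_Q}] + \mathrm{Lip}(\hat{K}_Q) W_1(P,Q)$. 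Concatenating these three inequalities produces the desired bound.

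There is no substantive obstacle: the proof is a clean three-line chain, and the only structural fact needed beyond Kantorovich--Rubinstein duality is that optimal convex regularizers automatically have finite Lipschitz constants. The conceptual content is that the convexity shape constraint restricts the admissible regularizers to a class of uniformly Lipschitz gauges, which is precisely what allows $W_1$-closeness of distributions to translate into closeness of objective values and therefore stability of the optimal convex regularizer across distributional shifts.
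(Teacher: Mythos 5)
Your proof is correct and takes essentially the same approach as the paper's: the same three-step chain of Kantorovich--Rubinstein duality (applied to $\|\cdot\|_{\hat K_P}$), optimality of $\hat K_P$ with $\hat K_Q$ as a feasible competitor, and Kantorovich--Rubinstein again (applied to $\|\cdot\|_{\hat K_Q}$). The only cosmetic difference is that you reduce up front to a difference bound using that $\hat K_Q$ attains the infimum, while the paper carries the infimum notation through to the end.
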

\begin{proof}
    Note that since $\hat{K}_P$ and $\hat{K}_Q$ are convex bodies with the origin in their interiors, their Lipschitz constants are finite. Moreover, by Kantorovich-Rubinstein duality \cite{villani2021topics}, we have that for any $L$-Lipschitz function and probability measures $P,Q$, $$|\mathbb{E}_Q[f] - \mathbb{E}_P[f]| \leq LW_1(P,Q).$$ Using the Lipschitz bound, we have that \begin{align*}
        \mathbb{E}_Q[\|\bx\|_{\hat{K}_P}] & = \mathbb{E}_Q[\|\bx\|_{\hat{K}_P}] - \mathbb{E}_P[\|\bx\|_{\hat{K}_P}] + \mathbb{E}_P[\|\bx\|_{\hat{K}_P}]  \\
        & \leq \mathbb{E}_P[\|\bx\|_{\hat{K}_P}]  + \mathrm{Lip}(\hat{K}_P)W_1(P,Q) \\
        & \leq \mathbb{E}_P[\|\bx\|_{\hat{K}_Q}]  + \mathrm{Lip}(\hat{K}_P)W_1(P,Q)
    \end{align*} where in the last line we used optimality of $\hat{K}_P.$ Now, considering $\hat{K}_Q$, we have that \begin{align*}
        \mathbb{E}_P[\|\bx\|_{\hat{K}_Q}] & = \mathbb{E}_P[\|\bx\|_{\hat{K}_Q}] -\mathbb{E}_Q[\|\bx\|_{\hat{K}_Q}] + \mathbb{E}_Q[\|\bx\|_{\hat{K}_Q}] \\
        & \leq \inf_{K\ \text{convex},\  \mathrm{vol}(K)=1} \mathbb{E}_Q[\|\bx\|_K] + L(\hat{K}_Q)W_1(P,Q) 
    \end{align*} where we used the definition of $\hat{K}_Q$ in the last inequality. Combining the two inequalities, we see that \begin{align*}
\mathbb{E}_Q[\|\bx\|_{\hat{K}_P}] & \leq   \inf_{K\ \text{convex},\  \mathrm{vol}(K)=1} \mathbb{E}_Q[\|\bx\|_K] + \left(\mathrm{Lip}(\hat{K}_P)+ \mathrm{Lip}(\hat{K}_Q)\right)W_1(P,Q)    
    \end{align*} as desired.
\end{proof} \begin{remark}
This result shows how the \textit{performance} of an optimal convex regularizer for a different distribution $Q$ is controlled by the distance of the distribution to $P$. This also depends multiplicatively on the Lipschitz constants of the optimal convex regularizers for $P$ and $Q$. Note that for convex bodies, their Lipschitz constants are controlled by the size of the largest Euclidean ball contained in its interior. For nonconvex star bodies, it is possible for star body regularizers to have arbitrarily large Lipschitz constants, even if they contain a Euclidean ball in their interior, making a bound of the above form less useful.    
\end{remark}


%

\section{Alternative Proof Techniques and Extensions}
\label{sec:extensions}
While Theorem \ref{thm:optstarbodyreg} provides a characterization of optimal star-body regularizers via dual Brunn–Minkowski theory, it is also possible to arrive at the result through more elementary means. In this section we first present a constructive discretization-based proof, which reduces the infinite-dimensional optimization problem to a sequence of finite-dimensional convex programs. This perspective provides additional intuition: optimal gauges can be seen as limits of piecewise-constant approximations. 

Beyond this expository goal, we also show how the same discretization framework naturally accommodates extensions of the optimal regularization problem, including critic-based formulations and their distributionally robust variants. These examples highlight the flexibility of our approach and its connection to adversarially motivated learning paradigms.

\subsection{Elementary Discretization Proof of Theorem \ref{thm:optstarbodyreg}} \label{sec:piecewisestar}

In what follows, we provide an alternative proof of Theorem \ref{thm:optstarbodyreg}, one that is more elementary compared to the analysis in \cite{leong2025optimal}.  The basic idea is to write down a suitable discretization of \eqref{eq:opt_star_formulation} that leads to a finite dimensional convex program for which we are readily able to characterize the optimal regularizer. As in Section \ref{sec:dro-reformulation-convex-duality}, let $\mathcal{U} = \{ U : U \subset \sd \}$ be a collection of open subsets of the unit sphere $\sd$ that form a partition of the sphere $\sd$, up to a set of zero measure.  Consider the collection of star sets $K$ whose radial functions are piecewise {\em constant} over each $U \in \mathcal{U}$.  With a slight abuse of notation, we simply say that $K$ is piecewise constant over $\mathcal{U}$. We first obtain the optimal solution to the discretized version of \eqref{eq:opt_star_formulation}.

\begin{proposition} \label{thm:optstarreg_piecewiseconstant}
Let $P$ be a distribution on $\R^d$ with density $p$. Suppose $\rho_P$ is positive.  Consider 
\begin{align} \label{eq:optformulation_piecewiseconstant}
\underset{K \in \mathcal{S}^d}{\mathrm{argmin}}  ~~  \E_P[\|\bx\|_K] \quad \mathrm{s.t.} \quad \mathrm{vol}(K) = 1, K \text{ is piecewise constant over } \mathcal{U}.
\end{align}
Then the optimal solution is the star set $\hat{K}$ whose radial function over $\mathcal{U}$ is given by
\begin{equation} \label{eq:optstar_piecewiseconstant}
\rho(U) = \frac{c}{w(U)}\left( \int_{v \in U \cap \sd} \int_{r=0}^{\infty} r^d p(r\bv) \mathrm{d}\bv \right)^{1/(d+1)}.
\end{equation}
Here, $w(U)$ is the surface area of $U \in \sd$, and $c > 0$ is a scalar chosen such that $\mathrm{vol}(\hat{K}) = 1$.
\end{proposition}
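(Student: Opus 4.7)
The plan is to reduce this infinite-dimensional shape optimization to a finite-dimensional convex program by parameterizing piecewise constant star bodies via their radial values on the cells of $\mathcal{U}$, and then invoke Lagrange multipliers to obtain the explicit minimizer.

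First, I would fix notation: for a piecewise constant $K$, let $\rho_U > 0$ denote the common value of $\rho_K$ on $U$, and let $w_U$ denote the surface area of $U$ on $\sd$. Passing to spherical coordinates $\bx = r \bv$ with $r > 0$ and $\bv \in \sd$ so that $d\bx = r^{d-1}\,dr\,d\bv$, the expected gauge decouples across cells since $\|\bx\|_K = r/\rho_U$ whenever $\bv \in U$:
\[
\E_P[\|\bx\|_K] \;=\; \sum_{U \in \mathcal{U}} \frac{a_U}{\rho_U}, \qquad a_U := \int_{U \cap \sd}\int_0^\infty r^d p(r\bv)\,dr\,d\bv.
\]
A parallel polar calculation converts the normalization constraint into the finite-dimensional equation $\frac{1}{d}\sum_{U} w_U \rho_U^d = 1$. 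Note that positivity of $\rho_P$ guarantees $a_U > 0$ for every $U$.

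Second, I would argue the reduced program is convex in the variable $t_U := 1/\rho_U$: the objective $\sum_U a_U t_U$ is linear, and the volume constraint rewrites as $\sum_U w_U t_U^{-d} \leq d$, which is convex since $t \mapsto t^{-d}$ is convex on $(0,\infty)$; the constraint must be tight at any optimum because decreasing $t_U$ strictly decreases the objective while relaxing volume. Existence of a minimizer then follows from coercivity, since $t_U \to 0$ is ruled out by the finite objective after using $a_U>0$, and $t_U \to \infty$ is ruled out by the volume constraint. With convexity established, KKT conditions are both necessary and sufficient for global optimality.

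Third, forming the Lagrangian $L = \sum_U a_U/\rho_U + \mu(\tfrac{1}{d}\sum_U w_U \rho_U^d - 1)$ and setting $\partial_{\rho_U} L = 0$ yields $\mu w_U \rho_U^{d+1} = a_U$, i.e.\ $\rho_U \propto (a_U/w_U)^{1/(d+1)}$ for each $U$ independently. Substituting back into the volume constraint determines the multiplier $\mu$ in closed form, producing the scalar $c$ that normalizes $\mathrm{vol}(\hat K)$ to one and giving the claimed description of the optimal radial function.

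The main technical point to watch is justifying that the KKT stationary point is the global minimum rather than merely a critical point; the substitution $t_U = 1/\rho_U$ handles this cleanly by rendering the feasible set convex with a linear objective. A secondary bookkeeping obstacle is tracking the exponents on $w_U$ carefully when normalizing, so that the final formula indeed matches \eqref{eq:optstar_piecewiseconstant} and, in the fine-discretization limit $w_U \to 0$, recovers the continuum characterization $\rho_P$ from Theorem \ref{thm:optstarbodyreg}.
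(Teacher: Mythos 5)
Your argument is correct and is essentially the paper's own: reduce to the finite-dimensional program in the per-cell gauge values $t_U = 1/\rho_U$ (linear objective, convex volume constraint since $t\mapsto t^{-d}$ is convex on $(0,\infty)$), note the constraint is active by $1$-homogeneity, and solve via Lagrange/KKT to get $\rho_U \propto (a_U/w_U)^{1/(d+1)}$.

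One thing worth flagging: the formula you derive, $\rho_U = c\,(a_U/w_U)^{1/(d+1)}$, does \emph{not} literally match the displayed expression \eqref{eq:optstar_piecewiseconstant}, which places the factor $w(U)$ \emph{outside} the $1/(d+1)$ power (i.e.\ $\rho(U) = c\,w(U)^{-1} a_U^{1/(d+1)}$). Your version is the correct one. Indeed, solving $\alpha_U = \lambda d w_U / t_U^{d+1}$ gives $t_U^{d+1} = \lambda d w_U/\alpha_U$, so $\rho_U = 1/t_U = (\alpha_U/(\lambda d w_U))^{1/(d+1)}$, and this is also what makes the fine-discretization sanity check work: for a small cell $U$ around $\bu$, $a_U \approx w_U\,\rho_P(\bu)^{d+1}$, so $(a_U/w_U)^{1/(d+1)} \to \rho_P(\bu)$, whereas $w_U^{-1} a_U^{1/(d+1)} \approx w_U^{-d/(d+1)}\rho_P(\bu)$ diverges. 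So do not try to force your exponents to match \eqref{eq:optstar_piecewiseconstant} as printed; the exponent on $w(U)$ there is a typographical slip (and the paper's own proof contains the same stationarity equation you wrote, with the same intended conclusion).
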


\begin{proof}[Proof of Proposition \ref{thm:optstarreg_piecewiseconstant}]
Let $K$ denote a piecewise constant star body over $\mathcal{U}$. We let $t_U$ denote the value of the gauge function.  Then the volume of $K$ is given by 
\begin{equation*}
\propto ~~ \sum_{U \in \mathcal{U}} w_U / t_U^{d}.
\end{equation*}
where $w_U$ is the surface area of $U$.

Next, we evaluate the objective with respect to $K$.  Fix $U \in \mathcal{U}$.  Then, by integrating over spherical coordinates, one has
\begin{equation*}
\begin{aligned}
\mathbb{E} [ \|\bx\|_{K} \cdot 1\{\bx \in U \}] ~=~ & \int \|\bx\|_K p(\bx) \cdot 1\{ \bx \in U \} \mathrm{d}\bx \\
~=~ & \int_{\sd} \int_{r=0}^{\infty} r^d \|\bv\|_K p(r \bv) \cdot 1\{\bv \in U \} \mathrm{d}\bv \\
~=~ & t_U \int_{\bv \in U \cap \sd} \int_{r=0}^{\infty} r^d p(r \bv) \mathrm{d}\bv.
\end{aligned}
\end{equation*}

Denote $\alpha_U = \int_{\bv \in U} \int_{r=0}^{\infty} r^d p(r \bv) \mathrm{d}\bv$.  Then the optimization instance \eqref{eq:opt_star_formulation} where we restrict to star sets that are piecewise constant over $\mathcal{U}$ is given by
\begin{equation} \label{eq:starbody_finitedimopt}
\underset{t_U > 0}{\arg \min} ~~ \sum_{U \in \mathcal{U}} \alpha_U t_U \qquad \mathrm{s.t.} \qquad \sum_{U \in \mathcal{U}} w_U / t_U^d \leq 1.
\end{equation}

Note that this is a convex program.  In particular, the function $x \mapsto x^{-d}$ is convex over the domain $x>0$. We proceed to solve the optimization instance.  The Lagrangian is
\begin{equation*}
\mathcal{L} := \sum \alpha_U t_U + \lambda (\sum w_U / t_U^d -1).
\end{equation*}
The derivative of $\mathcal{L}$ with respect to $t_U$ is
\begin{equation*}
\frac{ d \mathcal{L} }{d t_U } = \alpha_U - \lambda d w_U / t_U^{d+1}.
\end{equation*}
Therefore, at optimality, one has 
\begin{equation*}
t_U = \Big( \frac{\alpha_U}{\lambda d w_U} \Big)^{1/(d+1)}.
\end{equation*}

The solution coincides with \eqref{eq:optstar_piecewiseconstant}, up to an unknown parameter $\lambda$.  The objective $\alpha_U t_U$ decreases monotonically as $\lambda$ increases.  So $\lambda$ is chosen as large as possible such that the constraint $\sum_{u \in \mathcal{U}} w_U / t_U^d \leq 1$ is satisfied with equality.
\end{proof}

It is perhaps clear that Theorem \ref{thm:optstarbodyreg} should be the continuous limit of Proposition \ref{thm:optstarreg_piecewiseconstant}.  The objective of this section is to formalize the process of taking limits as the discretization goes to zero.  This subsection may be skipped without affecting the readability of the remaining sections. To make these steps precise, we will construct star sets that are piecewise constant over partitions that become increasing smaller.  Formally:

\begin{definition}[Refining partitions]
We say that a sequence of partitions (of the unit sphere) $\{ \mathcal{U}_t \}_{t=1}^{\infty}$ is a sequence of {\em refining partitions} if there exists $\kappa < 1$ such that
\begin{enumerate}
\item $\mathrm{diam}(U) \leq \kappa^t $ for all $ U \in \mathcal{U}_t$, and
\item if $U_s \in \mathcal{U}_s$ and $U_t \in \mathcal{U}_t$ where $s \leq t$, then either $U_t \subset U_s$ or $\mu(U_t \cap U_s) = 0$.
\end{enumerate}
\end{definition}


Next, we prove a series of helpful technical results.  Consider a sequence of refining partitions $\{ \mathcal{U}_t \}_{t=1}^{\infty}$.  Let $\hat{K}(\mathcal{U}_t)$ be the optimal star set according to Proposition \ref{thm:optstarreg_piecewiseconstant}.  Our next result shows that $\hat{K}(\mathcal{U}_t) \rightarrow \hat{K}$, the optimal solution in Theorem \ref{thm:optstarbodyreg}. 

\begin{proposition} \label{thm:convergence_of_optimalapproxs}
Suppose $\rho_P$ is positive and continuous.  Then
\begin{equation*}
\rho_{\hat{K}(\mathcal{U}_t)} (\bu) \rightarrow \rho_{\hat{K}} (\bu) \quad \text{for all} \quad \bu \in \mathbb{S}^{d-1}.
\end{equation*}
In particular, because $\rho_{\hat{K}}$ is continuous over a compact domain, the convergence is also uniform.  In particular, this implies 
\begin{equation*}
\| \rho_{\hat{K}(\mathcal{U}_t)} - \rho \|_{\infty} \rightarrow 0 \qquad \text{as}
 \qquad t \rightarrow \infty,
\end{equation*}
which implies
\begin{equation*}
\hat{K}(\mathcal{U}_t) \rightarrow \hat{K} \qquad \text{ in the radial Hausdorff metric}.
\end{equation*}
\end{proposition}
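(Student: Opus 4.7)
The plan is to exploit the closed-form expression for the piecewise-constant minimizer from Proposition \ref{thm:optstarreg_piecewiseconstant} and compare it directly to the continuous minimizer $\rho_{\hat K} = c_* \rho_P$ from Theorem \ref{thm:optstarbodyreg}. The argument divides naturally into three steps: rewriting the discrete optima in a form amenable to limits, controlling the normalizing Lagrange multiplier through a Riemann-sum argument, and upgrading pointwise convergence to uniform convergence.

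First I would reformulate the discrete optimum. Switching to spherical coordinates, the coefficients appearing in the finite-dimensional program \eqref{eq:starbody_finitedimopt} satisfy $\alpha_U = \int_{U} \rho_P(\bv)^{d+1}\,d\bv$, so the first-order optimality conditions give $t_U = (\lambda_t\, d\, w_U/\alpha_U)^{1/(d+1)}$. Consequently, for $\bu$ lying in the partition element $U_t \in \mathcal{U}_t$, the radial function evaluates to
\begin{equation*}
\rho_{\hat K(\mathcal{U}_t)}(\bu) \;=\; \frac{1}{(\lambda_t d)^{1/(d+1)}} \left( \frac{1}{w_{U_t}} \int_{U_t} \rho_P(\bv)^{d+1}\,d\bv \right)^{1/(d+1)}.
\end{equation*}
Because $\rho_P^{d+1}$ is continuous on the compact sphere $\sd$, it is uniformly continuous, and $\mathrm{diam}(U_t) \le \kappa^t \to 0$. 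Hence the averaged quantity above converges to $\rho_P(\bu)^{d+1}$ as $t \to \infty$, and this convergence is uniform in $\bu \in \sd$.

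Next I would control the Lagrange multiplier $\lambda_t$. Substituting the optimal $t_U$ into the volume constraint yields an identity of the form $(\lambda_t d)^{d/(d+1)} = \sum_{U \in \mathcal{U}_t} w_U \,(\alpha_U/w_U)^{d/(d+1)}$, in which the right-hand side is precisely a Riemann sum for $\int_{\sd} \rho_P(\bu)^d\,d\bu$. Uniform continuity of $\rho_P^d$ again ensures that the Riemann sum converges to the integral, so $\lambda_t$ converges to an explicit constant. A short calculation then shows that this constant is exactly the one matching the normalization $\mathrm{vol}(L_P)^{-1/d}$ appearing in Theorem \ref{thm:optstarbodyreg}, so that the pointwise limit is $c_* \rho_P(\bu) = \rho_{\hat K}(\bu)$.

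Finally, since both the inner average and the normalizer $\lambda_t$ are controlled uniformly in $\bu$, the pointwise convergence is in fact uniform, giving $\|\rho_{\hat K(\mathcal{U}_t)} - \rho_{\hat K}\|_\infty \to 0$. Alternatively, having established pointwise convergence, Dini's theorem applies once one verifies uniform equicontinuity of the approximants, which follows from uniform continuity of $\rho_P$ together with the refinement property. Convergence in the supremum norm of radial functions is by definition convergence in the radial Hausdorff metric, completing the proof. The main obstacle I anticipate is the bookkeeping surrounding $\lambda_t$: one must simultaneously verify Riemann-sum convergence and confirm that the constant recovered in the limit agrees with the one prescribed by Theorem \ref{thm:optstarbodyreg}, so that the two formulas for the radial function genuinely coincide in the limit rather than differing by a multiplicative factor.
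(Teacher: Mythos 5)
Your proof is correct and takes the same core approach as the paper: exploiting uniform continuity of $\rho_P$ on the compact sphere to show the averaged quantities converge uniformly. However, you are more careful about the normalization than the paper's own proof, which only establishes convergence of the unnormalized quantity $\bigl(w(U)^{-1}\int_{U}\int_0^\infty r^d p(r\bv)\,\mathrm{d}r\,\mathrm{d}\bv\bigr)^{1/(d+1)}$ to $\rho_P(\bu)$ and then dismisses the rest with ``the other assertions follow accordingly.'' Your Riemann-sum argument for $\lambda_t$ is a genuine improvement: it is the step that actually shows the normalization constant implicit in Proposition~\ref{thm:optstarreg_piecewiseconstant} converges to the $\mathrm{vol}(L_P)^{-1/d}$ appearing in Theorem~\ref{thm:optstarbodyreg}, and the paper's proof leaves this as an unjustified claim. (Incidentally, your derivation exposes that the optimality condition written in the paper's proof of Proposition~\ref{thm:optstarreg_piecewiseconstant}, $t_U = (\alpha_U/(\lambda d w_U))^{1/(d+1)}$, is inverted; your $t_U = (\lambda_t d w_U/\alpha_U)^{1/(d+1)}$ is what actually follows from setting the Lagrangian derivative to zero, and it is also the one consistent with the formula used in the paper's proof of the present proposition.)

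One small caution: your side remark about Dini's theorem does not apply here. Dini requires a monotone sequence of \emph{continuous} functions converging pointwise to a continuous limit; the approximants $\rho_{\hat K(\mathcal{U}_t)}$ are piecewise constant and hence not continuous, and nothing guarantees monotonicity across refinements. You don't need Dini anyway: your direct argument already gives uniform convergence because both the averaging error (controlled by uniform continuity of $\rho_P^{d+1}$ together with $\mathrm{diam}(U_t)\le\kappa^t$) and the normalizer $\lambda_t$ are controlled uniformly in $\bu$. I would simply delete the Dini alternative.
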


\begin{proof} [Proof of Proposition \ref{thm:convergence_of_optimalapproxs}]
Pick $\bu \in \mathbb{S}^{d-1}$.  Consider a sequence of sets $\{U_t\}_{t=1}^{\infty}$ where $U_t \in \mathcal{U}_t$, and $\bu \in U_t$; i.e., it is the set within each partition that contains $\bu$.  Set $\epsilon > 0$.  Because $\rho_p$ is continuous and the unit sphere is compact, the total variation of $\rho_P$ can be made arbitrary small for partitions $\mathcal{U}_t$ that are chosen sufficiently fine.  In particular, there is a $\delta$ such that for all sets $\overline{u} \in \mathcal{U}$ are such that $\mathrm{diam}(\overline{U}) \leq \delta$ then the total variation of $\rho_P$ is at $\epsilon$.  For such a partition, one has
\begin{equation*}
\Big | \Big( \frac{1}{w(\overline{U})}\int_{\bv \in \overline{U} \cap \mathbb{S}^{d-1}} \int_{r=0}^{\infty} r^d p(r\bv) \mathrm{d}\bv \Big)^{1/(d+1)} - \Big( \int_{r=0}^{\infty} r^d p(r\bu) dr \Big)^{1/(d+1)} \Big| \leq \epsilon.
\end{equation*}
In particular, by taking $\delta \rightarrow 0$, we conclude that $\rho \rightarrow \rho_P$ uniformly.  The other assertions in the result follow accordingly.
\end{proof}

Our next result is a technical lemma that shows:  Suppose $K$ is non-degenerate star.  Given any refining partition, it is possible to approximate $K$ as being piecewise constant over a sufficiently fine partition, up to any desired accuracy.

\begin{lemma} \label{thm:approxstarbody_piecewise}
Let $K$ be a star body, and suppose that $\epsilon_0 \cdot B \subset K$ for some $\epsilon_0 >0$.  Let $\{ \mathcal{U}_t \}_{t=1}^{\infty}$ be a sequence of refining partitions of $\mathbb{S}^{d-1}$.  Then for every $\epsilon > 0$, there exists a $t: = t(\epsilon)$ (depending on $\epsilon$) as well as a corresponding sequence of star sets $\{ K(\mathcal{U}_s) \}_{s=t}^{\infty}$ such that (i) $K(\mathcal{U}_s)$ is piecewise constant over $\mathcal{U}_s$, (ii) the radial function of $K(\mathcal{U}_s)$ satisfies $\| \rho_K - \rho_{K(\mathcal{U}_s)} \|_{\infty} \leq \epsilon$, and (iii) the volume of $K(\mathcal{U}_s)$ is one.
\end{lemma}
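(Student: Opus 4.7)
The plan is to construct a piecewise-constant approximation of $\rho_K$ directly on a sufficiently fine cell of the refining partition, then renormalize so the volume equals one. Since $K$ is a star body, $\rho_K$ is continuous on the compact sphere $\sd$, hence uniformly continuous, and by the hypothesis $\epsilon_0 \cdot B \subset K$ it is bounded below by $\epsilon_0 > 0$. Consistent with the surrounding development -- Proposition \ref{thm:convergence_of_optimalapproxs} is applied to the unit-volume optimizer $\hat{K}$ -- I would take $\mathrm{vol}(K) = 1$ throughout; the general case reduces to this after a preliminary rescaling of $K$.

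First I would build the unnormalized approximation. Given $\epsilon > 0$, uniform continuity supplies a modulus $\delta > 0$ such that $|\rho_K(\bu) - \rho_K(\bv)| \leq \epsilon/2$ whenever $\|\bu - \bv\|_2 \leq \delta$, and the refining property $\mathrm{diam}(U) \leq \kappa^t$ lets me pick $t_1$ so that every cell $U \in \mathcal{U}_s$ for $s \geq t_1$ satisfies $\mathrm{diam}(U) < \delta$. For each such cell, pick a representative $\bu_U \in U$ and set $\tilde{\rho}_s(U) := \rho_K(\bu_U)$. Let $\tilde{K}(\mathcal{U}_s)$ denote the star set whose radial function is piecewise-constant on $\mathcal{U}_s$ with these values. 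Because $\tilde{\rho}_s \geq \epsilon_0 > 0$, $\tilde{K}(\mathcal{U}_s)$ is a genuine star body, and by construction $\|\rho_K - \tilde{\rho}_s\|_\infty \leq \epsilon/2$.

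Next I would enforce the volume constraint. Using the spherical identity $\mathrm{vol}(L) = d^{-1}\int_{\sd}\rho_L(\bu)^d\,d\bu$ together with the uniform upper bound $\tilde{\rho}_s \leq \|\rho_K\|_\infty$ and the uniform convergence $\tilde{\rho}_s \to \rho_K$, dominated convergence yields $\mathrm{vol}(\tilde{K}(\mathcal{U}_s)) \to \mathrm{vol}(K) = 1$. Setting $c_s := \mathrm{vol}(\tilde{K}(\mathcal{U}_s))^{-1/d}$ and $K(\mathcal{U}_s) := c_s \cdot \tilde{K}(\mathcal{U}_s)$ secures (i) and (iii) for free, with $c_s \to 1$. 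The triangle inequality then gives
\begin{equation*}
\|\rho_K - \rho_{K(\mathcal{U}_s)}\|_\infty \leq \|\rho_K - \tilde{\rho}_s\|_\infty + |c_s - 1|\,\|\tilde{\rho}_s\|_\infty \leq \tfrac{\epsilon}{2} + |c_s - 1|\bigl(\|\rho_K\|_\infty + \tfrac{\epsilon}{2}\bigr),
\end{equation*}
and enlarging $t = t(\epsilon) \geq t_1$ so that the rescaling term is at most $\epsilon/2$ delivers (ii) for all $s \geq t(\epsilon)$.

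The main obstacle, such as it is, lies in balancing the two sources of error: the discretization error, controlled by uniform continuity of $\rho_K$, and the rescaling error, controlled by convergence of the volume functional. A minor technical point worth checking carefully is that the volume of a piecewise-constant star set indeed equals $\sum_{U\in\mathcal{U}_s} w(U)\,\tilde{\rho}_s(U)^d/d$, but this is immediate from the spherical change of variables already invoked in the proof of Proposition \ref{thm:optstarreg_piecewiseconstant}.
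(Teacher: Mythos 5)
Your proof is correct and follows essentially the same route as the paper: build a piecewise-constant approximation via uniform continuity of $\rho_K$ (the paper uses cell averages where you use representative values — an inessential variant), then rescale to unit volume and split the error as discretization plus rescaling by the triangle inequality. The only cosmetic difference is that the paper bounds the volume drift with an explicit ball-sandwich estimate while you invoke dominated convergence for uniformly convergent radial functions, and your observation that the statement implicitly presumes $\mathrm{vol}(K)=1$ is accurate and worth noting.
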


\begin{proof}
For the first part of the proof, we show that there exists a $t_1 := t_1(\epsilon)$ and a sequence $\{ \tilde{K} (\mathcal{U}_s) \}_{s=t}^{\infty}$ satisfying requirements (i) and (ii).  In the second part of the proof, we scale the sets $\tilde{K} (\mathcal{U}_s)$ to have unit volume.  We show that when the indices $t$ are sufficiently large, the conditions (i) and (ii) are still satisfied.  

In what follows, for any partition $\mathcal{U}$, we define $\tilde{K}(\mathcal{U})$ to be a star set that is piecewise constant over $\mathcal{U}$ whereby the value over $U \in \mathcal{U}$ is equal to the average of $\rho$ over $U$: $\frac{1}{\mu (U)} \int_{\bx \in U} \rho(\bx) \mathrm{d}\bx$.  

[(i) and (ii)]:  Since $K$ is a star body, the radial function $\rho:= \rho_K$ is continuous.  Since $\rho$ is continuous over a compact set, it is also uniformly continuous.   In particular, for every $\epsilon > 0$ is a $\delta > 0$ such that $|\rho(\bx) - \rho(\by)| \leq \epsilon $ for all pairs of $\bx,\by$ such that $\|\bx-\by\|_2 \leq \delta$.  In particular, if we set $t_1$ to be such that $\kappa^{t_1} \leq \delta$, then one has $|\rho(\bx) - \rho(\by)| \leq \epsilon$ for all $\bx,\by \in u$ where $u \in \mathcal{U}_t$.  This is because $\mathrm{diam}(U) \leq \kappa^{t_1} \leq \delta$.  Consequently, this implies $\| \rho_K - \rho_{\tilde{K}(\mathcal{U}_t)} \|_{\infty} \leq \epsilon$.  In what follows, given $\epsilon > 0$, we let $t_1(\epsilon)$ be the value of $t_1$ satisfying the above consequences.

[(iii)]:  Next, we define the sequence $\{ K(\mathcal{U}_t) \}_{t=1}^{\infty}$ where $K(\mathcal{U}_t) = \tilde{K}(\mathcal{U}_t) / \mathrm{vol}(\tilde{K}(\mathcal{U}_t))$.  Then the resulting sequence of sets have volume one.  We show that conditions (i) and (ii) remain satisfied provided $t$ is sufficiently large.

Because $\rho_K$ is continuous over a compact set $\mathbb{S}^{d-1}$, and because $\epsilon_0 \subset K$, one can bound $R_0 \leq \rho_K \leq R_1$ for some $R_0,R_1>0$.  Consider a partition $\mathcal{U}$, and define $\delta_2 := \| \rho_{K(\mathcal{U}_t)} - \rho_{K} \|_{\infty}$.  Because $R_1 \leq \rho_{K} \leq R_2$, one can bound $\mathrm{vol}(K) - \mathrm{vol}(K(\mathcal{U})) \leq \mathrm{vol}( R_1 \cdot B) - \mathrm{vol}( (R_1 - \delta_2) \cdot B) = (R_1^d - (R_1 - \delta_2)^d) \mathrm{vol}(B)$, where $B$ is the unit $\ell_2$-ball.  One then has
\begin{equation*}
\frac{|\mathrm{vol}(K)-\mathrm{vol}(K(\mathcal{U}))|}{\mathrm{vol}(K(\mathcal{U}))} \leq \frac{(R_1^d - (R_1 - \delta_2)^d) \cdot \mathrm{vol}(B)}{R_0^d \cdot \mathrm{vol}(B)} \leq c \delta_2,
\end{equation*}
for some constant $c$ independent of $\delta_2$, though possibly depending on $d$, $R_0$, and $R_1$.

Now set $t := \max \{ t_1(\epsilon / (2 c R_1)), t_1 (\epsilon/2)\}$.  Based on the earlier construction, we have a sequence $\{\mathcal{U}_s\}_{s=t}^{\infty}$ that satisfies $\|\rho_K - \rho_{K(\mathcal{U}_s)}\|_{\infty} \leq \epsilon / (2 c R_1)$ for all $s \geq t$ ($\epsilon / (2 c R_1)$ is $\delta_2$).  We then have
\begin{equation*}
\begin{aligned}
\| \rho_{\tilde{K}(\mathcal{U}_s)} - \rho_K \|_{\infty} = \, & \| (1/\mathrm{vol}(K(\mathcal{U}_s))) \rho_{K(\mathcal{U}_s)} - \rho_K \|_{\infty} \\
\leq \, & \| (1/\mathrm{vol}(K(\mathcal{U}_s))) \rho_{K(\mathcal{U}_s)} - \rho_{K(\mathcal{U}_s)} \|_{\infty} + \| \rho_{K(\mathcal{U}_s)} - \rho_K \|_{\infty} \\
= \, & |1 - 1/\mathrm{vol}(K(\mathcal{U}_s))| \| \rho_{K(\mathcal{U}_s)} \| + \| \rho_{K(\mathcal{U}_s)} - \rho_K \|_{\infty} \\
\leq \, & c \times (\epsilon/(2cR_1)) \times R_1 + \epsilon/2 = \epsilon.
\end{aligned}
\end{equation*}
In the last inequality, we apply the upper bound $\| \rho_{K(\mathcal{U}_s)} - \rho_K \|_{\infty} \leq \epsilon / 2$ using the fact that $t$ was chosen so that $t \geq t_1 (\epsilon/2)$.  We thus see that the constructed sequence satisfies all three conditions, as required.
\end{proof}

\begin{lemma} \label{thm:gaugeradial}
Suppose $K_1$ and $K_2$ are star sets.  Suppose it is known that $\epsilon  B^d \subset K_1$, and $\epsilon B^d \subset K_2$.  Then $\| \|\cdot\|_{K_1} - \|\cdot\|_{K_2} \|_{\infty} \leq (1/\epsilon^2) \| \rho_{K_1} - \rho_{K_2} \|_{\infty}$.
\end{lemma}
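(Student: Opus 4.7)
The plan is to reduce everything to a pointwise estimate over the sphere $\sd$, exploiting the fact that on the sphere the gauge and radial functions are reciprocals. Specifically, for any $\bu \in \sd$ and any star body $K$, the definitions give $\|\bu\|_K = 1/\rho_K(\bu)$. Since both $\|\cdot\|_{K_i}$ and $\rho_{K_i}$ are $1$-homogeneous (respectively $(-1)$-homogeneous after inverting) and the supremum norm in the statement is taken over $\sd$, it suffices to control $|\|\bu\|_{K_1} - \|\bu\|_{K_2}|$ for $\bu \in \sd$.

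First I would write the difference of reciprocals in the standard form
\begin{equation*}
\big| \|\bu\|_{K_1} - \|\bu\|_{K_2} \big| ~=~ \left| \frac{1}{\rho_{K_1}(\bu)} - \frac{1}{\rho_{K_2}(\bu)} \right| ~=~ \frac{|\rho_{K_2}(\bu) - \rho_{K_1}(\bu)|}{\rho_{K_1}(\bu) \, \rho_{K_2}(\bu)}.
\end{equation*}
Next, the hypothesis $\epsilon B^d \subset K_i$ translates directly into the uniform lower bound $\rho_{K_i}(\bu) \geq \epsilon$ for all $\bu \in \sd$ and $i=1,2$: indeed, if $\epsilon \bu \in K_i$ then by definition of the radial function $\rho_{K_i}(\bu) \geq \epsilon$. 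Substituting this into the denominator of the display above yields
\begin{equation*}
\big| \|\bu\|_{K_1} - \|\bu\|_{K_2} \big| ~\leq~ \frac{1}{\epsilon^2} \, |\rho_{K_1}(\bu) - \rho_{K_2}(\bu)| ~\leq~ \frac{1}{\epsilon^2} \, \| \rho_{K_1} - \rho_{K_2} \|_{\infty}.
\end{equation*}
Taking the supremum of the left-hand side over $\bu \in \sd$ gives the claimed inequality.

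There is no real obstacle here: the argument is essentially a one-line algebraic identity coupled with the observation that containment of a Euclidean ball of radius $\epsilon$ lower-bounds the radial function by $\epsilon$. The only minor point worth stating explicitly is why $\rho_{K_i}(\bu) \geq \epsilon$ follows from $\epsilon B^d \subset K_i$, and why it is enough to bound the sphere-supremum of the gauge difference (both are immediate from the definitions of $\rho$ and $\|\cdot\|_K$ recalled in Section \ref{sec:prelims}).
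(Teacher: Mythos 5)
Your proof is correct and follows essentially the same route as the paper's: rewrite the gauge difference on the sphere as a difference of reciprocals of the radial functions, lower-bound each $\rho_{K_i}(\bu)$ by $\epsilon$ using the ball containment, and take the supremum. No meaningful differences.
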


\begin{proof}
Note that since $\epsilon \cdot B \subset K_i$, we have $\rho_{K_i}(\bu) \geq \rho_{\epsilon \cdot B}(\bu)= \epsilon$ for any $\bu \in \sd$. This gives the following: 

\begin{align*} 
\| \|\cdot\|_{K_1} - \|\cdot\|_{K_2} \|_{\infty} = \| 1/\rho_{K_1} - 1/\rho_{K_2} \|_{\infty} & = \max_\bu | 1/\rho_{K_1}(\bu) - 1/\rho_{K_2}(\bu) | \\
& =  \max_\bu |\rho_{K_1}(\bu) - \rho_{K_2}(\bu) |/ (|\rho_{K_1}(\bu) \rho_{K_2}(\bu)|) \\
& \leq \max_\bu |\rho_{K_1}(\bu) - \rho_{K_2}(\bu) |/ \epsilon^2 \\
& = (1/\epsilon^2) \| \rho_{K_1} - \rho_{K_2} \|_{\infty}.
\end{align*}
\end{proof}

The basic idea behind the proof of Theorem \ref{thm:optstarbodyreg} is via contradiction.  Suppose that $\hat{K}$ is not optimal, and that there is a different star body $\tilde{K}$ that attains a smaller objective.  We pass over to the finite dimensional problem to arrive at a contradiction.  Concretely, we use $\tilde{K}$ to construct a $\tilde{K}(\mathcal{U}_t)$ that is piecewise constant on $\mathcal{U}_t$, and for a partition $\mathcal{U}_t$ that is suitably fine.

%

\begin{proof}[Proof of Theorem \ref{thm:optstarbodyreg}]

Let $\{ \mathcal{U}_t \}_{t=1}^{\infty}$ be a refining partition.  Let $\hat{K} (\mathcal{U}_t)$ be the optimal solution to \eqref{eq:optstar_piecewiseconstant} corresponding to each $\mathcal{U}_t$.  

In the first part, we prove that $\hat{K}$ is optimal.  Suppose this is not the case, and that there exists a different star body $\tilde{K}$ with unit volume and a strictly smaller objective $\E_P[\|\bx\|_K]$.  One can perturb $\tilde{K}$ so that it contains a very small kernel, while still attaining an objective that is still strictly smaller than that of $\hat{K}$, while having unit volume.  We assume this is the case.

Now define
\begin{equation*}
\epsilon := \E_P[\|\bx\|_{\hat{K}}] - \E_P[\|\bx\|_{\tilde{K}}] > 0.
\end{equation*}

Then, because $\{ \mathcal{U}_t \}_{t=1}^{\infty}$ is a refining partition, we have $\| \rho_{\hat{K} (\mathcal{U}_t)} - \rho_{\hat{K}} \|_{\infty} \rightarrow 0$ as $t \rightarrow \infty$.  In particular, there exists $t_0$ such that for all $t \geq t_0$, one has
\begin{equation*}
|\E_P[\|\bx\|_{\hat{K}}] - \E_P[\|\bx\|_{\hat{K}(\mathcal{U}_t)}] | = |\E_P[\|\bx\|_{\hat{K}}] -  \sum_{u \in \mathcal{U}_t} a_U \hat{t}_U | \leq \epsilon / 3.
\end{equation*}  

Next, by using Lemma \ref{thm:approxstarbody_piecewise} with the choice of $K$ being $\tilde{K}$, we construct the sequence $\{ \tilde{K} (\mathcal{U}_t) \}$. In particular, since $\tilde{K}$ is a star body, there exists a $\delta >0$ such that $\delta B^d \subseteq \tilde{K}$. Choosing a partition $\mathcal{U}_t$ such that $\frac{\delta}{2} B^d \subseteq \tilde{K}(\mathcal{U}_t)$, one has
\begin{equation*}
\begin{aligned}
\big| \mathbb{E}[ \| \bx \|_{\tilde{K}} ] - \mathbb{E}[ \| \bx \|_{\tilde{K}(\mathcal{U}_t)} ] \big| \,=~ & \big| \int_{\mathbb{S}^{d-1}} \int_{r=0}^{\infty} r^d ( \|\bv\|_{\tilde{K}} - \|\bv\|_{\tilde{K}(\mathcal{U}_t)} ) p(r\bv) \mathrm{d}\bv \big| \\
\,\overset{(a)}{\leq}~ & \| \|\cdot\|_{\tilde{K}} - \|\cdot\|_{\tilde{K}(\mathcal{U}_t)} \|_{\infty} \int_{\mathbb{S}^{d-1}} \int_{r=0}^{\infty} r^d p(r\bv) \mathrm{d}\bv \\
\,\overset{(b)}{\leq}~ & c_{\delta} \| \rho_{\tilde{K}} - \rho_{\tilde{K}(\mathcal{U}_t)} \|_{\infty} \int_{\mathbb{S}^{d-1}} \int_{r=0}^{\infty} r^d p(r\bv) \mathrm{d}\bv.
\end{aligned}
\end{equation*}
where $c_{\delta} := 4/\delta^2$. Here, we get (a) because $\|\bv\|_{\tilde{K}} - \|\bv\|_{\tilde{K}(\mathcal{U}_t)} \leq \| \|\cdot\|_{\tilde{K}} - \|\cdot\|_{\tilde{K}(\mathcal{U}_t)} \|_{\infty}$ by definition, while (b) follows from an application of Lemma \ref{thm:gaugeradial}. In particular, $\| \rho_{\tilde{K}} - \rho_{\tilde{K}(\mathcal{U}_t)} \|_{\infty}$ can be controlled by the choice of the partition $\mathcal{U}_t$. We choose it to be sufficiently fine so that one has $| \mathbb{E}[ \| \bx \|_{\tilde{K}} ] - \mathbb{E}[ \| \bx \|_{\tilde{K}(\mathcal{U}_t)} ] | \leq \epsilon / 3$.

Notice that $\tilde{K}(\mathcal{U}_t)$ has volume one, is piecewise constant over $\mathcal{U}_t$, and has an objective value that improves on the objective of $\hat{K} (\mathcal{U}_t)$ by at least $\epsilon/3$.  This contradicts the optimality of $\hat{K} (\mathcal{U}_t)$ in \eqref{eq:optformulation_piecewiseconstant} with $\mathcal{U} = \mathcal{U}_t$.  Therefore it must be that $\hat{K}$ is an optimal solution.
\end{proof}

\subsection{Critic-Based Regularizers and Robust Extensions} \label{sec:critic}

In this section, we discuss a variant of the formulation \eqref{eq:opt_star_formulation} in which we seek regularizers that are optimal in a critic-based, adversarial framework as well as its distributionally robust extensions \cite{leong2024star}.  In particular, prior work in adversarial regularization \cite{lunz2018adversarial, Mukherjeeetal2021, Shumaylovetal2024, zhang2025learning} have learned regularizers by solving an optimization problem of the form \begin{align*}
    \min_{\mathcal{R}} \mathbb{E}_P[\mathcal{R}(\bx)] - \mathbb{E}_Q[\mathcal{R}(\bx)] + \mathbb{E}[(\|\nabla \mathcal{R}(\bx)\| -1)_+]
\end{align*} where $(t)_+:=\max\{t,0\}$, $P$ and $Q$ are distributions that represent clean and noisy data, respectively and the penalty term encourages the regularizer $\mathcal{R}$ to be Lipschitz. The intuition behind this formulation is that a good regularizer should assign low (high) values to likely (unlikely) data. For our variant, we consider following optimization instance
\begin{equation} \label{eq:opt_critic_formulation}
\underset{K }{\mathrm{argmin}} ~ \E_P[\|\bx\|_K] - \E_Q[\|\bx\|_K] \quad \mathrm{s.t.} \quad \mathrm{vol}(K) = 1 , \epsilon B^d \subseteq K. 
\end{equation}

How does this differ from \eqref{eq:opt_star_formulation}?  First, the objective has an additional term $- \E_Q[\|\bx\|_K]$.  We can combine this objective with the original into a single expectation, with the modification being that the corresponding measure is necessarily {\em signed}:
\begin{equation} \label{eq:opt_critic_signed}
\underset{K}{\mathrm{argmin}} ~ \E_S[\|\bx\|_K] \quad \mathrm{s.t.} \quad \mathrm{vol}(K) = 1 , \epsilon B^d \subseteq K.  
\end{equation}
Here, $S = P-Q$.  Second, we impose the constraint that $K$ contains the scaled unit-ball. Note that for star body gauges, being $1/\epsilon$-Lipschitz is equivalent to $\epsilon B^d \subseteq \mathrm{ker}(K)$. We consider a relaxed setting with $\epsilon B^d \subseteq K$. Another reason why this is necessary can be seen by considering the following discretized problem:
\begin{equation*}
\underset{t_U > 0}{\arg \min} ~~ \sum \sigma_U t_U \qquad \mathrm{s.t.} \qquad \sum w_U (1/t_U)^d \leq 1.
\end{equation*}
As before, we define $\sigma_U = \int_{\bv \in U} \int_{r=0}^{\infty} r^d p(r\bv) \mathrm{d}\bv - \int_{\bv \in U} \int_{r=0}^{\infty} r^d q(r\bv) \mathrm{d}\bv$.  However, the key difference in this current set-up from the previous is that the $\sigma_U$'s are derived from {\em signed} measures; in particular, they could be {\em negative} in certain sectors $u$.  Suppose indeed that $\sigma_U < 0$ for some $U$.  In principle, one can take $t_U \rightarrow +\infty$ without affecting the volume constraint since $(1/t_U)^d \rightarrow 0$.  We would then have that the objective $\rightarrow -\infty$; that is, it is unbounded below.  This has a couple of consequences:  First, this means that the gauge function evaluation of $K$ is unbounded in the sector $U$, which may be somewhat undesirable from a modelling perspective.  Second, and perhaps more seriously, is that because the objective $\rightarrow -\infty$, it is difficult to reason if the optimization formulation is actually doing anything sensible.

To circumvent these problems, we impose additional constraints on the problem so that we avoid having the gauge evaluate to $+\infty$ in certain directions.  In what follows, we specifically impose that the gauge function $t_U$ takes a maximum value of $1/\epsilon$.  Stated differently, the radial distance of $K$, in the sector $u$, is at least $\epsilon$.  
By imposing this constraint, we arrive at the optimization instance
\begin{equation*}
\underset{t_U}{\arg \min} ~~ \sum \sigma_U t_U \qquad \mathrm{s.t.} \qquad \sum (1/t_U)^d \leq 1, 0 < t_U \leq 1/\epsilon.
\end{equation*}

\begin{proposition} \label{thm:optcritic_piecewiseconstant}
Let $P$ and $Q$ be distribution on $\R^d$ with densities $p$ and $q$.  Consider 
\begin{equation} \label{eq:optcritic_piecewiseconstant}
\underset{K}{\mathrm{argmin}}  ~  \E_{P-Q}[\|\bx\|_K] \quad \mathrm{s.t.} \quad \mathrm{vol}(K) = 1, \epsilon B^d\subseteq K, K \text{ is piecewise constant over } U \in \mathcal{U}.
\end{equation}
Then the optimal solution is the star set $\hat{K}$ whose radial function over each $u \in \mathcal{U}$ is given by $\rho_U$ where
\begin{equation*}
\rho_U = 
\begin{cases}
\begin{array}{ccc}
\epsilon \quad & \text{ if } & \quad \sigma_U < \epsilon^{d+1} d w_U \lambda \\
\epsilon (\sigma_U / d w_U \lambda )^{1/(d+1)} \quad & \text{ if } & \quad \sigma \geq \epsilon^{d+1} d w_U \lambda
\end{array}
\end{cases}.
\end{equation*}
where $\lambda$ is a scaling parameter such that $\sum r_U^d = 1$.
\end{proposition}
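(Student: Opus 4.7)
My plan is to adapt the proof of Proposition \ref{thm:optstarreg_piecewiseconstant} almost verbatim, the only new feature being an additional box constraint coming from the inclusion $\epsilon B^d \subseteq K$. Parameterize a piecewise-constant star set by its gauge values $t_U := \|\bu\|_K$ for $\bu \in U$. The same spherical-coordinate computation that produced \eqref{eq:starbody_finitedimopt} rewrites the signed objective as
\begin{equation*}
\E_{P-Q}[\|\bx\|_K] \;=\; \sum_{U \in \mathcal{U}} \sigma_U\, t_U, \qquad \sigma_U \;:=\; \int_{\bv \in U} \int_{r=0}^{\infty} r^d \bigl( p(r\bv) - q(r\bv) \bigr)\, dr\, d\bv,
\end{equation*}
while the volume constraint is, up to a constant, $\sum_U w_U\, t_U^{-d} \leq 1$. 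The new ingredient is that $\epsilon B^d \subseteq K$ is equivalent to $\rho_K \geq \epsilon$ pointwise, i.e.\ $t_U \leq 1/\epsilon$ for every $U$. We therefore arrive at the finite-dimensional instance
\begin{equation*}
\min_{t_U > 0} \; \sum_U \sigma_U\, t_U \quad \text{s.t.}\quad \sum_U w_U\, t_U^{-d} \leq 1,\quad t_U \leq 1/\epsilon,
\end{equation*}
which is a convex program (the objective is linear and $t\mapsto t^{-d}$ is convex on $(0,\infty)$).

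Next I would apply the KKT conditions, which are necessary and sufficient by convexity. Introducing multipliers $\lambda \geq 0$ for the volume constraint and $\mu_U \geq 0$ for each box constraint, stationarity in $t_U$ reads
\begin{equation*}
\sigma_U \;-\; \lambda\, d\, w_U\, t_U^{-(d+1)} \;+\; \mu_U \;=\; 0, \qquad \mu_U\,(t_U - 1/\epsilon) = 0.
\end{equation*}
A clean two-case split then yields the claim. If the box constraint is inactive ($\mu_U = 0$), stationarity gives $t_U^{d+1} = \lambda d w_U / \sigma_U$, which is positive and satisfies $t_U \leq 1/\epsilon$ precisely when $\sigma_U \geq \epsilon^{d+1} d w_U \lambda$; inverting gives the interior branch for $\rho_U = 1/t_U$. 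Otherwise $t_U = 1/\epsilon$, i.e.\ $\rho_U = \epsilon$, and dual feasibility $\mu_U = \lambda d w_U \epsilon^{d+1} - \sigma_U \geq 0$ is exactly the complementary inequality $\sigma_U < \epsilon^{d+1} d w_U \lambda$. The scalar $\lambda$ is then uniquely pinned down by the requirement that the volume constraint holds with equality, as in Proposition \ref{thm:optstarreg_piecewiseconstant}.

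The main conceptual point, and the reason this proof requires more care than its unsigned analogue, is the role of the constraint $\epsilon B^d \subseteq K$. Without it, any sector with $\sigma_U < 0$ would drive $t_U \to \infty$: the objective contribution $\sigma_U t_U$ tends to $-\infty$ while the volume cost $w_U/t_U^d$ vanishes, so the program is unbounded below. Imposing $t_U \leq 1/\epsilon$ restores a compact feasible set and is the geometric mechanism that forces the two-regime behavior in the solution formula. The only remaining subtlety is verifying that the volume constraint is tight at any optimum — which follows from the usual $1$-homogeneity argument applied to any minimizer that does not saturate every box constraint — after which the derivation above is the full story.
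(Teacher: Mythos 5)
Your proof is correct and follows essentially the same route as the paper's: reduce to the finite-dimensional convex program $\min \sum_U \sigma_U t_U$ subject to $\sum_U w_U t_U^{-d} \le 1$ and $t_U \le 1/\epsilon$, then solve via the KKT system with a two-case split depending on whether the box constraint is active. One small remark in your favor: from the inactive-case stationarity $t_U = (\lambda d w_U / \sigma_U)^{1/(d+1)}$ you correctly get $\rho_U = (\sigma_U / \lambda d w_U)^{1/(d+1)}$ with no $\epsilon$ prefactor, which is the version that is continuous across the threshold $\sigma_U = \epsilon^{d+1} d w_U \lambda$; the extra $\epsilon$ appearing in the paper's displayed formula (and its line $t_U = (1/\epsilon)(d w_U \lambda / \sigma_U)^{1/(d+1)}$) does not follow from the stationarity condition and appears to be a typo.
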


\begin{proof}[Proof of Proposition \ref{thm:optcritic_piecewiseconstant}]
By a similar reasoning, the optimization instance that captures the above problem is
\begin{equation*}
\underset{t_U}{\arg \min} ~~ \sum \sigma_U t_U \qquad \mathrm{s.t.} \qquad \sum w_U (1/t_U)^d \leq 1, 0 < t_U \leq 1/\epsilon.
\end{equation*}

For now, we ignore the constraint $t_U > 0$.  The Lagrangian is
\begin{equation*}
\mathcal{L} := \sum \sigma_U t_U + \sum \mu_U (t_U - 1/\epsilon) + \lambda \big(\sum w_U / t_U^d - 1 \big).
\end{equation*}
The derivative of $\mathcal{L}$ with respect to $t_U$ is
\begin{equation*}
\frac{ d \mathcal{L} }{d t_U } = \sigma_U + \mu_U - d w_U \lambda / t_U^{d+1}.
\end{equation*}
At optimality, one has 
\begin{equation*}
\sigma_U + \mu_U = d w_U \lambda / t_U^{d+1}.
\end{equation*}
By considering cases, one sees that
\begin{equation*}
t_U = 
\begin{cases}
(1/\epsilon) \quad & \text{ if } \quad \sigma_U < \epsilon^{d+1} d w_U \lambda \\
(1/\epsilon) \times (d w_U \lambda / \sigma_U)^{1/(d+1)} \quad & \text{ if } \quad \sigma_U \geq \epsilon^{d+1} d w_U \lambda
\end{cases}.
\end{equation*}
Here, $\lambda$ is a scaling parameter such that $\sum (1/t_U)^d = 1$.

We explain these choices of $t_U$.  In the case where $\sigma_U < \epsilon^{d+1} d w_U \lambda$, we set $t_U = 1/\epsilon$, and $\mu_U = d w_U \lambda / t_U^{d+1} - \sigma_U = d w_U \lambda \epsilon^{d+1} - \sigma_U$.    Suppose $\sigma_U \geq \epsilon^{d+1} d w_U \lambda $.  Then set $\mu_U = 0$, and $\sigma_U = d w_U \lambda / t_U^{d+1}$; that is, we set $t_U = (d w_U \lambda / \sigma_U)^{1/(d+1)}$.  These choices satisfy the first order optimality conditions, feasibility conditions, and complementary slackness.  Since the optimization instance is convex, the solution to the KKT system are indeed optimal as well.  Finally, notice that in all of these cases we always have $t_U > 0$.  As such the constraint $t_U$ is automatically satisfied and need not be enforced.
\end{proof}

Using similar intuition to the derivation of Theorem \ref{thm:optstarbodyreg}, we arrive at the following solution in the continuous case.

\begin{theorem} \label{thm:optcriticbodyreg}
Let $P$ and $Q$ be a distribution on $\R^d$ with density $p$ and $q$ respectively. Suppose $\rho_P$, $\rho_Q$ are continuous.  For each $\bv \in S^{d-1}$, define
\begin{equation*}
\sigma(\bv) = \int_0^{\infty} r^d p(r\bv) \mathrm{d} r - \int_0^{\infty} r^d q(r\bv) \mathrm{d} r.
\end{equation*}
Let $\hat{K}$ be the star body whose radial function is
\begin{equation*}
\rho(u) = 
\begin{cases}
\begin{array}{ccc}
\epsilon & \text{ if } & \sigma(U) < \epsilon^{d+1} c \\
\epsilon (\sigma(U) /c)^{1/(d+1)} & \text{ if } & \sigma(U) \geq \epsilon^{d+1} c
\end{array}
\end{cases},
\end{equation*}
where $c$ is the unique scaling parameter chosen so that $\mathrm{vol}(\hat{K}) = 1$.  Then $\hat{K}$ is the solution to the minimization problem \eqref{eq:opt_critic_formulation}.
\end{theorem}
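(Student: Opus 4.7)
The plan is to follow the elementary discretization strategy used in Section \ref{sec:piecewisestar} to prove Theorem \ref{thm:optstarbodyreg}, now using Proposition \ref{thm:optcritic_piecewiseconstant} as the finite-dimensional building block. Fix a sequence of refining partitions $\{\mathcal{U}_t\}_{t=1}^{\infty}$ of $\sd$ and let $\hat{K}(\mathcal{U}_t)$ denote the optimal piecewise-constant star set supplied by Proposition \ref{thm:optcritic_piecewiseconstant}, with corresponding Lagrange multiplier $\lambda_t$ enforcing $\mathrm{vol}(\hat{K}(\mathcal{U}_t))=1$. The proposed strategy has three stages: (i) show pointwise (hence uniform) convergence of $\rho_{\hat{K}(\mathcal{U}_t)}$ to the candidate $\rho$ defined in the statement, (ii) show existence and convergence of the scaling parameter $c$, and (iii) rule out a strictly better competitor via a contradiction argument.

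For stage (i), the key observation is that $\sigma_U / w_U \to \sigma(\bv)$ uniformly for $\bv \in U$ as $\mathrm{diam}(U) \to 0$, by continuity of $\rho_P$ and $\rho_Q$ and the same reasoning used in Proposition \ref{thm:convergence_of_optimalapproxs}. The piecewise constant formula from Proposition \ref{thm:optcritic_piecewiseconstant} can be rewritten as
\begin{equation*}
\rho_U \;=\; \max\!\Big\{\epsilon,\; \epsilon \big((\sigma_U/w_U)/(d\lambda_t)\big)^{1/(d+1)}\Big\},
\end{equation*}
so that the case split converges to the case split in the statement of the theorem provided $\lambda_t \to c$ for some finite $c > 0$. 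I would establish convergence of $\lambda_t$ as follows: the volume constraint $\sum w_U \rho_U^d = 1$ becomes, in the limit, the integral equation
\begin{equation*}
\int_{\sd} \max\!\Big\{\epsilon,\; \epsilon(\sigma(\bv)/(d\lambda))^{1/(d+1)}\Big\}^d \, \mathrm{d}\bv \;=\; 1,
\end{equation*}
which admits a unique root $c$ by monotonicity of the left-hand side in $\lambda$. Uniqueness and monotonicity then force $\lambda_t \to c$, which combined with the uniform convergence of $\sigma_U/w_U$ gives $\|\rho_{\hat K(\mathcal{U}_t)} - \rho\|_\infty \to 0$. In particular, $\hat K$ itself is a feasible star body: it contains $\epsilon B^d$ by construction (the cutoff $\epsilon$), has unit volume, and $\rho$ is continuous since $\sigma$ is continuous and the $\max$ preserves continuity.

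For stage (iii), suppose for contradiction that some feasible $\tilde K$ (with $\epsilon B^d \subseteq \tilde K$, $\mathrm{vol}(\tilde K) = 1$) achieves $\mathbb{E}_{P-Q}[\|\bx\|_{\tilde K}] < \mathbb{E}_{P-Q}[\|\bx\|_{\hat K}] - 3\eta$ for some $\eta > 0$. Apply Lemma \ref{thm:approxstarbody_piecewise} to build a piecewise-constant $\tilde K(\mathcal{U}_t)$ approximating $\tilde K$ in the radial supremum norm; since $\tilde K$ contains $\epsilon B^d$, we can rescale the construction so that $\tilde K(\mathcal{U}_t)$ still contains $\epsilon B^d$ (e.g., take the pointwise minimum of the average-radial approximation with $\tilde K$, or inflate by a vanishing factor). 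Using Lemma \ref{thm:gaugeradial} on both $\hat K \mapsto \hat K(\mathcal{U}_t)$ and $\tilde K \mapsto \tilde K(\mathcal{U}_t)$, and the fact that the signed measure $P-Q$ integrates the gauge difference in supremum norm up to a finite constant (since $\int \|\bx\|_2 \, \mathrm{d}(P+Q) < \infty$ is inherited from the density integrability), one concludes that for $t$ sufficiently large, $\tilde K(\mathcal{U}_t)$ beats $\hat K(\mathcal{U}_t)$ in the discretized objective by more than $\eta$, contradicting Proposition \ref{thm:optcritic_piecewiseconstant}.

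The main obstacle I anticipate is stage (ii) combined with the signed-measure structure: unlike the positive-measure case in Theorem \ref{thm:optstarbodyreg}, here the cutoff in the formula for $\rho$ is genuinely active on sectors where $\sigma < 0$, and so both the objective and the constraint interact nontrivially through the Lagrange multiplier. A careful argument is required to show $\lambda_t \to c$ uniformly enough that the case-split thresholds converge (one must handle the boundary set $\{\bv : \sigma(\bv) = \epsilon^{d+1} c\}$, where the formula is insensitive but small perturbations in $\lambda_t$ could in principle shift where the cutoff activates). If $\sigma$ is continuous and this boundary set has measure zero, the convergence is straightforward; otherwise one would need a mild nondegeneracy assumption or a monotone convergence argument on the volume equation.
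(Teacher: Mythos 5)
Your proposal takes exactly the approach the paper intends: the paper explicitly omits the proof of Theorem \ref{thm:optcriticbodyreg}, stating only that it ``follows analogously to the proof of Theorem \ref{thm:optstarbodyreg},'' i.e., the discretization strategy of Section \ref{sec:piecewisestar} with Proposition \ref{thm:optcritic_piecewiseconstant} as the finite-dimensional building block, which is precisely what you carry out. Your extra care about the convergence of the Lagrange multiplier $\lambda_t$, the signed-measure structure, and the boundary set $\{\bv : \sigma(\bv) = \epsilon^{d+1} c\}$ correctly flags the genuine subtleties that the paper's one-line remark glosses over.
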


The proof of this result follows analogously to the proof of Theorem \ref{thm:optstarbodyreg}.  As such, we omit the proof of Theorem \ref{thm:optcriticbodyreg}.

\subsubsection{Distributionally robust critic-based regularizers}

The distributionally robust counterpart to \eqref{eq:opt_critic_formulation} is the following 
\begin{equation} \label{eq:dro_critic}
\underset{K }{\mathrm{argmin}} ~ \Big[ \underset{ d (\tilde{P},P) \leq \epsilon_P, d (Q,\tilde{Q}) \leq \epsilon_Q}{\max} ~ \Big[ \E_{\tilde{P}}[\|\bx\|_K] -  \E_{\tilde{Q}}[\|\bx\|_K] \Big]\Big] \quad \mathrm{s.t.} \quad \mathrm{vol}(K) = 1 , \epsilon B^d \subseteq K. 
\end{equation}
The maximum is taken with respect to all pairs of measures $\tilde{P}$ and $\tilde{Q}$ close to $P$ and $Q$.  It reflects the worst case instances of $\tilde{P}$ and $\tilde{Q}$, given reference distributions $P$ and $Q$.

To derive an expression analogous to \eqref{thm:dro_formulation}, we first state the dual expression of the inner problem to \eqref{eq:dro_critic}.  Concretely, consider the following LP:
\begin{equation*}
\begin{aligned}
\underset{\boldsymbol \beta_p, \boldsymbol \beta_q, \pi_p, \pi_q}{\max} ~ \langle \boldsymbol \beta_p - \boldsymbol \beta_q, \boldsymbol t \rangle \quad \mathrm{s.t.} & \quad \langle C, \pi_p \rangle \leq \epsilon_p, \pi_p \mathbf{1} = \boldsymbol \alpha_p, \pi_p^T \mathbf{1} = \boldsymbol \beta_p, \pi_p \geq 0 \\
& \quad \langle C, \pi_q \rangle \leq \epsilon_q, \pi_q \mathbf{1} = \boldsymbol \alpha_q, \pi_q^T \mathbf{1} = \boldsymbol \beta_q, \pi_q \geq 0  
\end{aligned}
\end{equation*}
The dual LP to the above is:
\begin{equation*}
\begin{aligned}
\underset{\boldsymbol \lambda, s}{\min} \qquad & s_P \epsilon_P + s_Q \epsilon_Q + \langle \boldsymbol \alpha_P, \boldsymbol \lambda_P \rangle + \langle \boldsymbol \alpha_Q, \boldsymbol \lambda_Q \rangle \\
\mathrm{s.t.} \qquad &  s_P C + \boldsymbol \lambda_P \mathbf{1}^T \geq \mathbf{1} \mathbf{t}^T, s_P \geq 0 \\
& s_Q C + \boldsymbol \lambda_Q \mathbf{1}^T \geq - \mathbf{1} \mathbf{t}^T, s_Q \geq 0
\end{aligned}
\end{equation*}

With this, we are able to state an equivalent formulation of \eqref{eq:dro_critic}, purely as a minimization instance:

\begin{proposition}
Under the setting of Theorem \ref{thm:dro_formulation}, the following inner problem for a fixed star body $K$
\begin{equation*}
\underset{ d (\tilde{P},P) \leq \epsilon_P, d (Q,\tilde{Q}) \leq \epsilon_Q}{\max} ~ \Big[ \E_{\tilde{P}}[\|\bx\|_K] -  \E_{\tilde{Q}}[\|\bx\|_K] \Big]
\end{equation*}
is equivalent to
\begin{equation} \label{eq:dro_cts_critic}
\begin{aligned}
\underset{s_P, s_Q, \lambda_P, \lambda_Q \in L1 (d\mu_{\alpha})}{\mathrm{argmin}} ~~ & s_P \epsilon_P + s_Q \epsilon_Q + \int \lambda_P (\bx) \mathrm{d}P (\bx) + \int \lambda_Q (\bx) \mathrm{d}Q(\bx) \\ 
\mathrm{s.t.} ~~ \qquad ~~ & s_P C(\bx,\by) + \lambda_P (\bx) \geq \| \by \|_{K}, s_Q C(\bx,\by) + \lambda_Q (\bx) \geq - \| \by \|_{K} \\
& s_P \geq 0 , s_Q \geq 0.
\end{aligned}
\end{equation}
\end{proposition}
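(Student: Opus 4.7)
The plan is to decouple the joint maximization over $(\tilde{P},\tilde{Q})$ and dualize each resulting one-sided Wasserstein DRO subproblem using Theorem 1 of \cite{blanchet2019quantifying}, exactly as in the proof of Theorem \ref{thm:dro_formulation}. The only new ingredient beyond that proof is handling the $\tilde{Q}$-piece, whose payoff is a negated gauge rather than a gauge.

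\emph{Step 1 (separation).} The inner feasible set is the Cartesian product of two independent Wasserstein balls, and the objective is additively separable in $\tilde{P}$ and $\tilde{Q}$. Using the identity $-\min_{\tilde{Q}} \E_{\tilde{Q}}[g] = \sup_{\tilde{Q}} \E_{\tilde{Q}}[-g]$, I would rewrite the inner maximum as
\begin{equation*}
\Bigl[\sup_{d(\tilde{P},P)\leq \epsilon_P}\E_{\tilde{P}}[\|\bx\|_K]\Bigr] + \Bigl[\sup_{d(\tilde{Q},Q)\leq \epsilon_Q}\E_{\tilde{Q}}[-\|\bx\|_K]\Bigr].
\end{equation*}

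\emph{Step 2 (dualize each piece).} For the first supremum, the argument in the proof of Theorem \ref{thm:dro_formulation} applies verbatim: since $K$ is a star body, $r B^d \subseteq K$ for some $r > 0$, and hence $\|\cdot\|_K \leq \|\cdot\|_2/r \in L^1(\mathrm{d}P)$. Invoking Theorem 1 of \cite{blanchet2019quantifying} produces the dual
\begin{equation*}
\inf \Bigl\{\, s_P \epsilon_P + \int \lambda_P(\bx)\,\mathrm{d}P(\bx) \; : \; s_P \geq 0,\; \lambda_P \in L1(\mathrm{d}P),\; s_P C(\bx,\by) + \lambda_P(\bx) \geq \|\by\|_K \,\Bigr\}.
\end{equation*}
For the second supremum, the payoff $\by \mapsto -\|\by\|_K$ is continuous, bounded above by $0$, and lies in $L^1(\mathrm{d}Q)$ by the same bound together with $\E_Q[\|\bx\|_2] < \infty$. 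The hypotheses of Theorem 1 of \cite{blanchet2019quantifying} are therefore met, yielding
\begin{equation*}
\inf \Bigl\{\, s_Q \epsilon_Q + \int \lambda_Q(\bx)\,\mathrm{d}Q(\bx) \; : \; s_Q \geq 0,\; \lambda_Q \in L1(\mathrm{d}Q),\; s_Q C(\bx,\by) + \lambda_Q(\bx) \geq -\|\by\|_K \,\Bigr\}.
\end{equation*}

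\emph{Step 3 (combine).} Since the two infima involve disjoint decision variables, their sum equals the joint infimum over $(s_P, s_Q, \lambda_P, \lambda_Q)$; collecting constraints and summing objectives reproduces exactly the formulation \eqref{eq:dro_cts_critic}. The main obstacle I anticipate is a purely technical one, namely verifying that the duality theorem of \cite{blanchet2019quantifying} applies to the non-positive continuous payoff $-\|\cdot\|_K$; this is straightforward because the relevant hypotheses concern only upper semicontinuity and integrability, both of which hold here. No substantive new idea beyond the single-distribution proof of Theorem \ref{thm:dro_formulation} is required.
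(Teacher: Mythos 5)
Your proposal is correct and takes essentially the same approach as the paper: both hinge on decoupling the joint maximization into independent Wasserstein-ball subproblems over $\tilde{P}$ and $\tilde{Q}$ and dualizing each, the paper at the discretized-LP level (note the two independent constraint blocks for $\pi_p$ and $\pi_q$) and you by applying Theorem 1 of \cite{blanchet2019quantifying} directly in continuous space, paralleling the formal proof of Theorem \ref{thm:dro_formulation}. The one caveat worth flagging is that the second application requires $\E_Q[\|\bx\|_2] < \infty$ so that $-\|\cdot\|_K \in L1(\mathrm{d}Q)$, an integrability assumption on $Q$ that the proposition leaves implicit.
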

\section{Conclusion} \label{sec:conclusions}

We developed a framework for distributionally robust optimal regularization, providing a principled approach to design regularization functionals that remain stable under distributional uncertainty. Our main contributions are as follows. (i) We present a convex-duality reformulation of the DRO problem \eqref{eq:dro} that renders the robust optimal regularization problem computationally tractable. Then, (ii) we present structural results and empirics that reveal how distributional robustness affects the geometry of the regularizer. Finally,  (iii) we study how to incorporate convex geometric constraints into the regularizer, and (iv) provide elementary proof techniques for establishing optimality of such regularizers.

There are several promising directions for future work:

\begin{itemize}
\item \textbf{Precise forms of the optimal regularizers.} While our work focused on numerical schemes to compute the optimal regularizer and analyzed how both the robustness parameter and cost function influence its geometry, it would be interesting to be able to describe, even in specific cases, what the exact form of the optimal regularizer is. We make progress towards this in the case of the Wasserstein-1 distance in Proposition \ref{prop:convex-lipschitz-prop}, but a more general understanding for other distances would be of interest. This would be particularly interesting in the case when we enforce convexity as a geometric constraint.
\item \textbf{Theoretical robustness of convex regularizers.} Our stylized experiments suggest our proposed notion of the optimal convex regularizer as in \eqref{eq:optimalconvexregularizer} appear to be robust to changes in the underlying distribution.  It would be interesting to investigate this observation formally.  More importantly, any result that supports our observation has important implications in practical applications, as it provides a compelling reason to learn or develop convexity-based models in data analytical and machine learning problems as opposed to non-convex ones, as convexity-based models appear to be naturally robust to perturbations in the underlying data distribution; in contrast, additional interventions to promote generalization are necessary when learning non-convex models from data.

\item \textbf{Beyond Wasserstein–based ambiguity sets.} Extending the analysis to other divergence measures and transport costs could broaden the scope of applications. Moreover, it would shed light on how different types of divergences lead to different structure in the induced regularizer.

\item \textbf{Algorithmic aspects.} Developing scalable solvers for the distributionally robust program and the convex program formulations in higher dimensions is an important step for practical deployment. This would also be imperative for future work in using such regularizers in inverse problems arising in scientific contexts where robustness is important, such as medical imaging.

\end{itemize}

\bibliography{opt-reg-ii}


\end{document}